\newcommand*{\mailto}[1]{\href{mailto:#1}{\nolinkurl{#1}}}
\newcommand{\Dx}{{\Delta x}}
\newcommand{\R}{\mathbb{R}}
\newcommand\myeq{\stackrel{\mathclap{\normalfont\mbox{\tiny Tonelli}}}{=}}
\newcommand\myineq{\stackrel{\mathclap{\normalfont\mbox{\tiny CS}}}{\leq}}
\newlist{thmlist}{enumerate}{1}
\setlist[thmlist]{label=(\roman{thmlisti}), 
, noitemsep}
\newcommand{\arxiv}[1]{\href{http://arxiv.org/pdf/#1}{arXiv:#1}}
\newcommand{\D}{\mathcal{D}}
\newcommand{\F}{\mathcal{F}}
\newcommand{\M}{\mathcal{M}}
\renewcommand{\id}{\text{\normalfont{id}}}
\renewcommand{\d}[1]{\ensuremath{\operatorname{d}\!{#1}}}
\apptocmd{\lim}{\limits}{}{}
\numberwithin{equation}{section}
{
    \theoremstyle{plain}
    \newtheorem{definition}{Definition}[section]
    \newtheorem{remark}[definition]{Remark}
    \newtheorem{prop}[definition]{Proposition}
    \newtheorem{example}[definition]{Example}

    \newtheorem{theorem}[definition]{Theorem}
    
    \newtheorem{lemma}[definition]{Lemma}
    
}
\title[Convergence rate for the HS equation]{On the convergence rate of a numerical method for the Hunter--Saxton equation}
\author[T. Christiansen]{Thomas Christiansen}
\address{Department of Mathematical Sciences\\ NTNU Norwegian University of Science and Technology\\ NO-7491 Trondheim\\ Norway}
\email{\mailto{thomas.christiansen@ntnu.no}}
\urladdr{\url{https://www.ntnu.edu/employees/thomachr}}
\keywords{Error bounds, Hunter--Saxton equation, $\alpha$-dissipative solutions, numerical orders of convergence}
\subjclass[2020]{Primary: 65M15, 65M12, 65M25; Secondary: 35Q35}
\begin{document}

\counterwithin{equation}{section}
\raggedbottom
\allowdisplaybreaks

\begin{abstract}\label{sec:Abstract}
	We derive a robust error estimate for a recently proposed numerical method for $\alpha$-dissipative solutions of the Hunter--Saxton equation, where $\alpha \in [0, 1]$. In particular, if the following two conditions hold: 
	\emph{i)} there exist a constant $C > 0$ and $\beta \in (0, 1]$ such that the initial spatial derivative $\bar{u}_{x}$ satisfies $\|\bar{u}_x(\cdot + h) - \bar{u}_x(\cdot)\|_2 \leq Ch^{\beta}$ for all $h \in (0, 2]$, and \emph{ii)}, the singular continuous part of the initial energy measure is zero, then the numerical wave profile converges with order $\mathcal{O}(\Dx^{\nicefrac{\beta}{8}})$ in $L^{\infty}(\R)$.  

Moreover, if $\alpha=0$, then the rate improves to $\mathcal{O}(\Dx^{\nicefrac{1}{4}})$ without the above assumptions, and we also obtain a convergence rate for the associated energy measure -- it converges with order $\mathcal{O}(\Dx^{\nicefrac{1}{2}})$ in the bounded Lipschitz metric.

	These convergence rates are illustrated by several examples. 
\end{abstract}

\maketitle

 \vspace{-0.5cm}
\section{Introduction}
The Cauchy problem for the Hunter--Saxton (HS) equation reads
 \begin{align}\label{eq:HS}
 	u_t(t, x) + u u_x(t, x) &= \frac{1}{4}\int_{-\infty}^x u_x^2(t, z)dz - \frac{1}{4}\int_{x}^{\infty}u_x^2(t, z)dz, \quad u|_{t=0} = \bar{u}, 
\end{align}
where $(t, x) \in [0, \infty)\times \R$. This equation was proposed in \cite{DynamicsDirector} as an asymptotic model for nonlinear instabilities in the director field of nematic liquid crystals. 

The left-hand side of equation \eqref{eq:HS} contains a Burgers nonlinearity, while the right-hand side is a nonlocal source term. In contrast to what one would typically expect for the inviscid Burgers equation, where shock formations are to be anticipated, see e.g., \cite{FrontTracking}, the source term has a regularizing effect on the solution $u$, preventing shocks from occurring, see \cite{DafermosContinuous, AlphaHS}. As a consequence, $u(t, \cdot)$ remains H{\"o}lder continuous, and, in addition, $u_x(t, \cdot) \in L^2(\R)$ for all $t\geq 0$. 

However, classical solutions to \eqref{eq:HS} will in general cease to exist past finite time since the spatial derivative, $u_x$, blows up pointwise -- a phenomenon referred to as {\em wave breaking}. If wave breaking takes place at $t=t_c$, then there exists at least one point $x \in \R$ such that $u_x(t, x) \rightarrow -\infty$ as $t \uparrow t_c$. This leads to energy concentrations on sets of zero measure as the following example illustrates. 
\begin{example}\label{ex:simpleBreaking}
	Consider the following weak solution of \eqref{eq:HS}, valid for $ 0 \leq t < 2$, 
\begin{align*}
	u(t, x) &= \begin{cases}
	 - \frac{1}{8}t + \frac{1}{2}, & x <\frac{1}{16}(8- t)t, \\
	\frac{8x - (t+4)}{4(t-2)}, & \frac{1}{16}(8 - t)t \leq x \leq \frac{1}{16}(8+t^2), \\ 
	\frac{1}{8}t, & \frac{1}{16}(8+t^2) < x,
	\end{cases} 
\end{align*}
and assume that the associated cumulative energy is described by 
\begin{equation}\label{eq:simpleEnergy}
	F(t, x) = \int_{-\infty}^x u_x^2(t, z)dz.
\end{equation}
The pair $(u, F)$ is visualized in Figure~\ref{fig:ex1} for $t=0$, $1$, and $1.95$. As $t \uparrow 2$ we observe that the interval $[\frac{1}{16}(8 - t)t, \frac{1}{16}(8+t^2)]$ shrinks to the single point $x=\frac{3}{4}$, while $u_x(t, \frac{3}{4}) \rightarrow -\infty$. Moreover, $u_{x}^2(t)dx$ formally tends to the Dirac measure $\frac{1}{2}d\delta_{\frac{3}{4}}$ as $t \uparrow 2$, and hence \eqref{eq:simpleEnergy} is inadequate for describing the energy at $t=2$. In other words, the energy cannot be described by an absolutely continuous function for all $t\geq 0$.

\begin{figure}
	\includegraphics{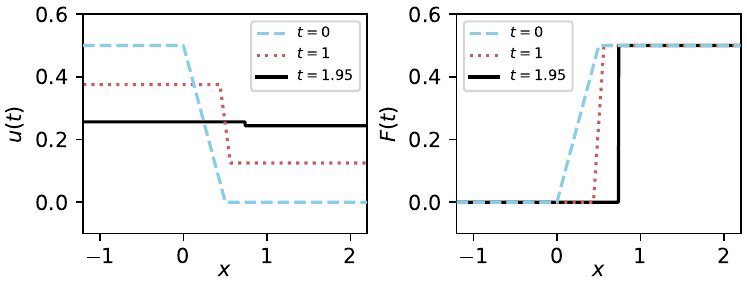}
	 \captionsetup{width=.95\linewidth}
	\caption{The solution $u$ (left) and the associated energy $F$ (right) from Example~\ref{ex:simpleBreaking} at the times $t=0$, $1$, and $1.95$.}
	\label{fig:ex1}
\end{figure}

\end{example}

To properly describe the solution in Example \ref{ex:simpleBreaking} and solutions to \eqref{eq:HS} in general,  one has to augment the wave profile $u$ with a finite, nonnegative Radon measure $\mu$ that represents the energy density. This measure satisfies  $d\mu_{\mathrm{ac}} = u_x^2 dx$ and encodes all the information about wave breaking. In particular, the singular part $\mu_{\mathrm{sing}}$ tells us where energy has concentrated, as was the case in Example~\ref{ex:simpleBreaking}, but the absolutely continuous part $\mu_{\mathrm{ac}}$ can also give rise to wave breaking which manifests itself in the form of infinitesimal energy concentrations, see e.g., \cite[Sec. 5]{AlphaAlgorithm} and \cite[Sec. 3]{NumericalConservative}. 

In order to extend local solutions of \eqref{eq:HS} into global ones, one has to make a choice about how to manipulate the concentrated energy at wave breaking.  There is a plethora of possible energy modifications, leading to a whole continuum of weak solutions. This has been a subject of extensive research, see for instance \cite{BressanGlobalSolutions, LipschitzConservative, ConsMetric, AlphaHS,  TandyLipschitz, DynamicsDirector, NonlinearVariational}. In this work however, we are concerned with the particular notion of $\alpha$-dissipative solutions, for fixed $\alpha \in [0, 1]$. As the name suggests, such solutions are obtained by removing a fixed $\alpha$-fraction of the concentrated energy at every wave breaking occurrence. Existence of weak $\alpha$-dissipative solutions was established in \cite{AlphaHS} via a generalized method of characteristics in the more general setting of $\alpha \in W^{1, \infty}(\R, [0, 1)) \cup \{1 \}$. Uniqueness on the other hand, has only been proved in the particular cases of {\em conservative} ($\alpha = 0$) and {\em dissipative} ($\alpha=1$) solutions, see \cite{UniquenessConservative} and \cite{DafermosCharacteristics}, respectively. 

The approach in \cite{AlphaHS} is based on introducing a nonlinear mapping $L$, which transforms the Eulerian initial data $(\bar{u}, \bar{\mu})$ into an initial triplet $(\bar{y}, \bar{U}, \bar{V})$  in Lagrangian coordinates, whose time evolution is governed by 
\begin{align}
\begin{aligned}\label{eq:ODEIntro}
	y_t(t, \xi) &= U(t, \xi), \\ 
	U_t(t, \xi) &= \frac{1}{2}V(t, \xi) - \frac{1}{4}V_{\infty}(t), \\
	V(t, \xi) & = \int_{-\infty}^{\xi}\left(1 - \alpha \chi_{\{\omega: t \geq \tau(\omega)>0\}}(\eta)\right)\!\bar{V}_{\xi}(\eta)d\eta.
\end{aligned}
\end{align}
Here {\small$\displaystyle V_{\infty}(t)= \lim_{\xi \rightarrow \infty} V(t, \xi)$} is the total cumulative Lagrangian energy and $\tau\!: \!\R \!\rightarrow \! [0, \infty]$ is the {\em wave breaking function} defined by 
\begin{align*}
	\tau(\xi) &= \begin{cases}
	0, & \bar{U}_{\xi}(\xi) = \bar{y}_{\xi}(\xi) = 0, \\
	-2\frac{\bar{y}_{\xi}(\xi)}{\bar{U}_{\xi}(\xi)}, & \bar{U}_{\xi}(\xi) < 0, \\
	\infty, & \text{otherwise}.
	\end{cases}
\end{align*}
It is natural to interpret $\xi$ as a label representing a particle, in which case $y(\cdot, \xi)$ represents the particle trajectory, $U(\cdot, \xi)$ its velocity, and $V_{\xi}(\cdot, \xi)$ its associated energy. In addition, $\tau(\xi)$ is the collision time of the particle, such that $\tau(\xi) = \infty$ indicates that the particle does not engage in any collisions. 

Naturally, several different types of numerical methods have been proposed for the HS equation, e.g., finite difference schemes of upwind-type \cite{FDDissipative}, discontinuous Galerkin methods \cite{DGMethod1, DGMethod2}, geometrical methods \cite{GeometricInt}, and schemes based on the method of characteristics and its generalizations \cite{Alpha2, AlphaAlgorithm, NumericalConservative}. Yet there are very few results regarding convergence rates. The author is only aware of two results in this direction: Let $\Dx$ denote a spatial discretization parameter for a uniform mesh on $\R$, then the first result is the $\mathcal{O}(\Dx^{\nicefrac{1}{2}})$-rate shown in  \cite{NumericalConservative} for conservative solutions under the assumption that $u \in W^{1, \infty}([0, T] \times \R)$. This regularity does however break down at wave breaking. The second result, shown in \cite{VanishingViscosity1} and \cite{RateVanishingViscosity}, is concerned with the {\em vanishing viscosity method} which converges with order $\mathcal{O}(\epsilon^{\nicefrac{1}{2}})$ towards dissipative solutions, where $\epsilon > 0$ is the viscosity parameter. Again, this result is valid either prior to wave breaking, or under the assumption that $\bar{u}_x \geq 0$, but this prevents wave breaking from taking place for any $t>0$.

We contribute to this line of research by deriving a {\em robust error estimate} (valid also after wave breaking) for the numerical scheme proposed in \cite{AlphaAlgorithm} for $\alpha$-dissipative solutions. This method takes advantage of the fact that if the initial data $\bar{u}$ in \eqref{eq:HS} is piecewise linear, then the corresponding solution, $u(t, \cdot)$, remains piecewise linear for all $t\geq 0$, see \cite{InverseScattering, PropertiesHS}. Motivated by this observation, the authors in \cite{AlphaAlgorithm} derive a piecewise linear projection operator, $P_{\Dx}$, which preserves the relation $d\mu_{\mathrm{ac}} = u_x^2dx$. This is essential in order to ensure that the numerical solutions $\{(u_{\Dx}, \mu_{\Dx})(t)\}_{\Dx > 0}$, which are obtained by solving \eqref{eq:HS} exactly with initial data $\{P_{\Dx}((\bar{u}, \bar{\mu}))\}_{\Dx > 0}$ via the outlined generalized method of characteristics, are in fact $\alpha$-dissipative solutions, i.e., they dissipate the correct amount of energy. It is then established that 
\begin{equation*}
	u_{\Dx} \rightarrow u \text{ in} C \big([0, \infty), L^{\infty}(\R) \big)  \hspace{0.3 cm} \text{ as } \Dx \rightarrow 0, 
\end{equation*}
and we aim to strengthen this result, by proving that $\|u(t) - u_{\Dx}(t)\|_{\infty} \!\leq \mathcal{O}(\Dx^{\gamma})$ for some $\gamma > 0$. With this in mind, let $\{(y_{\Dx}, U_{\Dx}, V_{\Dx})(t)\}_{\Dx > 0}$ denote the family of numerical solutions in Lagrangian coordinates. Thanks to \cite[Lem. 4.11]{AlphaAlgorithm}, we have, for any $t\geq 0$, 
\begin{equation*}
	\|u(t) - u_{\Dx}(t)\|_{\infty} \! \leq \!\|U(t) - U_{\Dx}(t)\|_{\infty} + \sqrt{\bar{\mu}(\R)}\|y(t) - y_{\Dx}(t)\|_{\infty}^{\nicefrac{1}{2}}. 
\end{equation*}
It therefore enough to derive convergence rates for $\|y(t)-y_{\Dx}(t)\|_{\infty}$ and \newline $\|U(t)-U_{\Dx}(t)\|_{\infty} = \|u(y(t)) - u_{\Dx}(y_{\Dx}(t))\|_{\infty}$. However, this is a delicate issue, because the relation between the Eulerian and Lagrangian variables is highly nonlinear. To be more precise, the initial energy density $\bar{\mu}$ may have a singular part, $\bar{\mu}_{\mathrm{sing}}$, which generates a set $\mathcal{A}$ of positive measure when mapping into Lagrangian coordinates. Since there is no natural way to relate the differentiated Eulerian and Lagrangian variables on $\mathcal{A}$, we only have a convergence rate for $\bar{V}_{\xi} - \bar{V}_{\Dx, \xi}$ over $\R \! \!\setminus \! \mathcal{A}$. This is problematic, because $\|U(t) - U_{\Dx}(t)\|_{\infty}$ and $\|y(t) - y_{\Dx}(t)\|_{\infty}$ heavily depend on $V_{\xi}(t) - V_{\Dx, \xi}(t)$ on all of $\R$, cf. \eqref{eq:ODEIntro}. 

To overcome this issue, we introduce a transformation which enables us to prove that the set $\mathcal{A}$ contributes at most with order $\mathcal{O}(\Dx^{\beta})$ to the aforementioned norms,  provided the following two conditions hold: \emph{i)} there exist some constant $C > 0$ and $\beta \in (0, 1]$ such that 
\begin{equation}\label{eq:extraCondIntro}
	\|\bar{u}_x(\cdot + h) - \bar{u}_x(\cdot)\|_2 \leq Ch^{\beta} \hspace{0.4cm} \text{for all } h \in (0, 2],
\end{equation}
and \emph{ii)}, the singular continuous part of $\bar{\mu}$ is zero. The latter condition is needed to avoid issues with measurability. The remaining contribution to $\|U(t) - U_{\Dx}(t)\|_{\infty}$ and $\|y(t) - y_{\Dx}(t)\|_{\infty}$ can be bounded in terms of the Lagrangian initial data over $\R \setminus \mathcal{A}$. One is therefore just left to relate the initial convergence rate in Eulerian coordinates to a rate of convergence for the Lagrangian variables on $\R \setminus \mathcal{A}$, from which one finally infers that $\|u(t) - u_{\Dx}(t)\|\leq \mathcal{O}(\Dx^{\nicefrac{\beta}{8}})$ for all $t \geq 0$. 

Before we illustrate the convergence rate on some important examples, we take a detour and investigate the particular case of conservative solutions. In this case we find that the rate can be improved significantly without the supplemented conditions on the initial data. Additionally, one obtains a convergence rate for the family $\{\mu_{\Dx}(t)\}_{\Dx > 0}$ of numerical energy densities in the bounded Lipschitz metric.

This paper is organized as follows. In Section~\ref{sec:Preliminaries} we remind the reader of the generalized method of characteristics based on \cite{LipschitzConservative} and \cite{AlphaHS}. In addition, we recall the numerical method from  \cite{AlphaAlgorithm}. Thereafter, in Section~\ref{sec:Convergenceux}, we study how \eqref{eq:extraCondIntro} enables us to deduce a convergence rate initially in $L^2(\R)$ for the numerical derivative $\bar{u}_{\Dx, x}$. This is essential, when we derive a convergence rate for $\|u(t)-u_{\Dx}(t)\|_{\infty}$ in Section~\ref{sec:ConvergenceRateu}.  Section~\ref{sec:convConservative} is devoted to the particular case of conservative solutions and we end by studying the convergence rates for some explicit examples in Section~\ref{sec:ExplicitRates}. This culminates in an $L^{\infty}(\R)$-error estimate of order $\mathcal{O}(\Dx^{\nicefrac{1}{48}})$ for the cusped initial data considered in \cite[Sec. 5]{AlphaAlgorithm}. 

\section{Preliminaries and the numerical method}\label{sec:Preliminaries}
In this section, we present the numerical method introduced in \cite{AlphaAlgorithm}. Since it relies on the construction of $\alpha$-dissipative solutions via the generalized method of characteristics from \cite{LipschitzConservative} and \cite{AlphaHS}, we start by recalling the main steps of this construction. 

Let $\alpha \in [0,1]$ be fixed and denote by $\M^+(\R)$, the space of nonnegative, finite Radon measures on $\R$. In order to define the set of admissible data to \eqref{eq:HS} we have to recall some function spaces from \cite{LipschitzConservative} and \cite{AlphaHS}. To this end, let $E$ be the Banach space \vspace{-0.05cm}
\begin{equation*}
	E := \{f \in L^{\infty}(\R): f' \in L^2(\R)\}, 
\end{equation*}
equipped with the norm $\|f\|_E := \|f\|_{\infty} + \|f'\|_2$. Furthermore, let 
\begin{equation*}
	H_d^1(\R) = H^1(\R) \times \R^d, \hspace{0.3cm} d = 1, 2, 
\end{equation*}
and decompose $\R$ into the overlapping union $\R\!=(-\infty, 1) \cup (-1, \infty)$. We pick a partition of unity, i.e., a pair $\chi^{+}, \chi^{-}$ belonging to $C^{\infty}(\R)$, satisfying $\chi^+ + \chi^- = 1$, $0 \leq \chi^{\pm} \leq 1$, in addition to $\mathrm{supp}(\chi^+) \subset (-1, \infty)$ and $\mathrm{supp}(\chi^-) \subset (-\infty, 1)$. This pair allows us to define the following linear, continuous, and injective maps 
\begin{align*}
	R_1: H_1^1(\R) &\rightarrow E, \hspace{1 cm}  (\bar{f}, a) \mapsto f = \bar{f} + a \chi^+, \\
	R_2: H_2^1(\R) &\rightarrow E, \hspace{1 cm}  (\bar{f}, a, b) \mapsto f = \bar{f} + a \chi^+ + b\chi^{-}.
\end{align*}
Accordingly, we define the Banach spaces $E_1$ and $E_2$ as \vspace{-0.05cm}
\begin{equation*}
	E_1 := R_1(H_1^1(\R)) \hspace{0.5 cm} \text{and} \hspace{ 0.5cm} E_2 := R_2(H^1_2(\R)),
\end{equation*}
endowed with the norms \vspace{-0.1cm}
\begin{align*}
	\|f\|_{E_1} &:= \left(\|f\|_{H^1(\R)}^2 + a^2\right)^{\nicefrac{1}{2}} \hspace{0.5cm} \text{and} \hspace{0.5cm} \|f\|_{E_2} := \left(\|f\|_{H^1(\R)}^2 + a^2 + b^2 \right)^{\nicefrac{1}{2}}\!\!\!.
\end{align*}
At this point, it is worth emphasizing that the spaces $E_1$ and $E_2$ do not depend on the particular choice of $\chi^+$ and $\chi^{-}$\!, see \cite{NonVanishingAsymptotes}. Furthermore, $R_1$ is also well-defined when applied to functions in $L_1^2(\R)\!:= L^{2}(\R) \times \R$, motivating the following space 
\begin{equation*}
	E_1^0:= R_1(L_1^2(\R)) \hspace{0.3cm} \text{with} \hspace{0.4cm} \|f\|_{E_1^0}:=\left(\|\bar{f}\|_2^2 + a^2\right)^{\nicefrac{1}{2}}\!\!\!.
\end{equation*}
At last, we can define the set of Eulerian coordinates $\D$, which contains the admissible initial data, as well as the numerical solutions (to be defined).    
\begin{definition}\label{def:EulerSet}
    The space $\D$ consists of all pairs $(u, \mu)$ satisfying
    \begin{enumerate}[label=(\roman*)]
        \item $u \in E_2$, 
        \item $\mu \in \M^{+}(\R)$,
         \item $\mu\left( (-\infty, \cdot) \right)\! \in E_1^0$,
        \item $d\mu_{\mathrm{ac}} = u_x^2dx$.\label{def:EulerCond3}
    \end{enumerate}
\end{definition}

It follows by \cite[Thm. 1.16]{RealAnalysisFolland} that there is a one-to-one relationship between $\mu\in \M^+(\R)$ and the cumulative energy $F(x) = \mu((-\infty, x))$, which justifies our upcoming, interchangeable use of both $(u, \mu)$ and $(u, F)$ to refer to the same element of $\D$. Moreover, recall that $F$ is an increasing, left continuous, and bounded function satisfying 
\begin{equation*}
	\lim_{x \rightarrow -\infty} F(x) = 0 \hspace{0.35cm} \text{and} \hspace{0.35cm} \lim_{x \rightarrow \infty} F(x)  = \mu(\R),
\end{equation*}
and that it can be decomposed into two parts,  \vspace{-0.1cm}
\begin{equation}\label{eq:splitF}
	F(x) = F_{\mathrm{ac}}(x) + F_{\mathrm{sing}}(x) := \mu_{\mathrm{ac}}((-\infty, x)) +\mu_{\mathrm{sing}}((-\infty, x)),
\end{equation}
where $\mu_{\mathrm{ac}}$ and $\mu_{\mathrm{sing}}$ are absolutely continuous and singular, respectively. 

Associated with $(\bar{u}, \bar{\mu}) \in \D$, we define the Lagrangian coordinates
\begin{subequations}\label{eq:MapL}
        \begin{align}
            \bar{y}(\xi) &= \sup \{x \in \R: x + \bar{\mu}((-\infty, x)) < \xi \} \label{eq:MapL1}, \\
            \bar{U}(\xi) &= \bar{u}(\bar{y}(\xi))\label{eq:MapL2}, \\ 
            \bar{V}(\xi) &= \xi - \bar{y}(\xi),  \label{eq:MapL3}  
        \end{align}
\end{subequations}
and we let $L$ denote the mapping which is given by $(\bar{y}, \bar{U}, \bar{V}) = L((\bar{u}, \bar{\mu}))$. Furthermore, the time evolution in Lagrangian coordinates is governed by, see \cite[Sec. 2]{AlphaHS}, 
\begin{subequations}\label{eq:LagrSystem}
\begin{align}
	y_t(t, \xi) &= U(t, \xi) \label{eq:LagrSystem1}, \\
	U_t(t, \xi) &= \frac{1}{2}V(t, \xi) - \frac{1}{4}V_{\infty}(t) \label{eq:LagrSystem2}, \\ 
	V(t, \xi) &= \int_{-\infty}^{\xi}\!\! \left(1 - \alpha \chi_{\{ \omega: t \geq \tau(\omega) > 0 \}}(\eta) \right)\!\bar{V}_{\xi}(\eta)d\eta \label{eq:LagrSystem3}. 
\end{align}
\end{subequations}
Here $V_{\infty}(t)$ denotes the right asymptote of $V(t, \xi)$ and $\tau:\R \rightarrow [0, \infty]$ is defined by 
\begin{align}\label{eq:waveBreaking}
	\tau(\xi) &= \begin{cases}
	0, & \bar{U}_{\xi}(\xi) = \bar{y}_{\xi}(\xi) = 0, \\
	-2\frac{\bar{y}_{\xi}(\xi)}{\bar{U}_{\xi}(\xi)}, & \bar{U}_{\xi}(\xi) < 0, \\
	\infty, & \text{otherwise},
	\end{cases}
\end{align}
and tells us when the solution experiences wave breaking in Lagrangian coordinates. In particular, the Lagrangian energy density $V_{\xi}(t, \xi)$ may drop abruptly at $t=\tau(\xi)$, cf. \eqref{eq:LagrSystem3}, and the right-hand side of \eqref{eq:LagrSystem} may therefore become discontinuous at the strictly positive, finite times given by $\tau$. In spite of that, it has been established in \cite[Lem. 2.2.1]{PhDNordli}, that \eqref{eq:LagrSystem} is globally well-posed for initial data belonging to the set  of Lagrangian coordinates $\F$, which in fact is an {\em invariant set} for \eqref{eq:LagrSystem}.   

 For its definition, we introduce the Banach space $B\! := E_2 \times E_2 \times E_1$, equipped with the norm 
\begin{equation*}
	\|(f_1, f_2, f_3)\|_{B} := \|f_1\|_{E_2} + \|f_2\|_{E_2} + \|f_3\|_{E_1}.
\end{equation*}
 
\begin{definition}\label{def:LagSet} Let $t \geq 0$. The solution $(y, U, V)(t)$ of \eqref{eq:LagrSystem} with initial data $(\bar{y}, \bar{U}, \bar{V})$ is said to belong to the space $\F$ for all $t\geq 0$, if the following holds. 
\begin{enumerate}[label=(\roman*)]
  \item $(y-\id, U, V)(t) \in \big[W^{1, \infty}(\R) \big]^3 \cap B$,
  \item $ y_{\xi}(t), \bar{V}_{\xi} \geq 0$ a.e., and there exists $c > 0$ such that $y_{\xi}(t) + \bar{V}_{\xi} \geq c $ holds a.e.,
  \item \label{def:LagSet3} $y_{\xi}V_{\xi}(t)= U_{\xi}^2(t)$ a.e., 
  \item $0 \leq V_{\xi}(t) \leq \bar{V}_{\xi}$ a.e., \label{def:relVH}
  \item If $\alpha = 1$, $y_{\xi}(t, \xi)\!=0$ implies $V_{\xi}(t, \xi)\!=0$ a.e., and $y_{\xi}(t, \xi) > 0$ implies \phantom{aa} $V_{\xi}(t, \xi)\! = \bar{V}_{\xi}(\xi)$ a.e.,
  \item If $\alpha \in [0, 1)$, then there exists a function $\kappa\!: \R \rightarrow \{1 - \alpha, 1 \}$ such that $V_{\xi}(t, \xi) = \kappa(\xi)\bar{V}_{\xi}(\xi)$ a.e., and $\kappa(\xi)=1$ whenever $U_{\xi}(t, \xi) < 0$. 
\end{enumerate}
\end{definition}

In addition, based on \eqref{eq:LagrSystem}, we also define the Lagrangian solution operator $S_t$. 

\begin{definition}\label{def:lagrangianSolution}
	For any $t \geq 0$ and $\bar{X} \in \F$, let $S_t(\bar{X}) \!= X(t)$, where $X(t)$ is the unique $\alpha$-dissipative solution of \eqref{eq:LagrSystem} satisfying $X(0)\! = \bar{X}$.
\end{definition} 

At last, in order to obtain the sought solution $(u, \mu)$ in Eulerian coordinates, one applies the mapping $M: \F \rightarrow \D$ given by $(u, \mu) = M((y, U, V))$, where 
    \begin{subequations}\label{eq:MapM}
        \begin{align}
            u(x) &= U(\xi) \text{ for any } \xi  \text{ such that } x = y(\xi)\label{eq:MapM1}, \\
            d\mu &= y_{\#} \left(V_{\xi}d\xi \right)\label{eq:MapM2}\!.
        \end{align}
    \end{subequations}
    Here $y_{\#}(V_{\xi}d\xi)$ denotes the pushforward measure given by 
    \begin{equation*}
    	\mu(A) = \int_{y^{-1}(A)} V_{\xi}(\xi)d\xi, \hspace{0.35cm} \text{for any Borel set } A \subseteq \R. 
	\end{equation*}
 
 To summarize, the $\alpha$-dissipative solution in Eulerian coordinates is obtained by combining the Lagrangian solution operator $S_t$ with the mappings $L$ and $M$, defined in \eqref{eq:MapL} and \eqref{eq:MapM}, respectively.  

\begin{definition}\label{def:alphaSolution}
	Let $(\bar{u}, \bar{\mu}) \in \D$, $t \geq 0$, and define $T_t: \D \rightarrow \D$ by $T_t := M \circ S_t \circ L$. The $\alpha$-dissipative solution at time $t$ is then given by 
	\begin{equation*}
		(u, \mu)(t) := T_t \left((\bar{u}, \bar{\mu}) \right) = M \circ S_t \circ L \left( (\bar{u}, \bar{\mu})\right)\!.
	\end{equation*}
\end{definition}

 Let us pause for a moment and compare the outlined approach with the one presented in e.g.,  \cite{AlphaAlgorithm, AlphaHS, PhDNordli} as there are some differences. More precisely, we have chosen to represent Eulerian solutions as pairs $(u, \mu)$ rather than triplets $(u, \mu, \nu)$ as considered in the aforementioned works. The reason is that $\nu$ is a measure added exclusively for technical reasons, in the sense that one recovers the same Eulerian solution $(u, \mu)(t)$ for all $t \geq 0$ irregardless of which $\nu$ one picks initially, see \cite[Lem. 2.13]{NewestLipschitzMetric} for a precise statement. However, this comes at the cost of losing the semigroup property of $T_t$, but from a numerical viewpoint this is of no concern, because we only map into Lagrangian coordinates initially and back again to Eulerian coordinates at the time for which we seek the numerical solution. In other words, viewing Eulerian coordinates as triplets, with the appropriate modifications of Definition~\ref{def:EulerSet}, \eqref{eq:MapL}, Definition~\ref{def:LagSet}, and \eqref{eq:MapM}, does not influence the upcoming error analysis, but it leads to more cumbersome notation.

The procedure outlined in \cite[Def. 3.1]{AlphaAlgorithm} for obtaining numerical solutions is closely related to that of Definition~\ref{def:alphaSolution}, but it involves one additional step: projecting the initial data onto a uniform mesh. In particular, by projecting the initial data $(\bar{u}, \bar{F})$ into the space of piecewise linear functions, the solution operator $T_t$ can be implemented exactly, as the solution associated with this projected initial data will remain piecewise linear at all later times, have a look at \cite{InverseScattering, PropertiesHS} for more details.  

To present the projection, let $\{x_j \}_{j \in \mathbb{Z}}$ denote a uniform discretization of $\R$, with $x_j = j \Dx$ for $\Dx > 0$ and $j \in \mathbb{Z}$. Furthermore, let $f_j = f(x_j)$ denote the values attained at the gridpoints $\{x_j\}_{j \in \mathbb{Z}}$ for any function $f:\R \rightarrow \R$ and introduce the difference operator \vspace{-0.1cm}
\begin{equation*}
	Df_{2j} = \frac{f_{2j+2} - f_{2j}}{2\Dx},
\end{equation*} 
in addition to the quantity  \vspace{-0.15cm}
\begin{equation}\label{eq:square_root}
	q_{2j} := \sqrt{DF_{\mathrm{ac}, 2j} - (Du_{2j})^2}. 
\end{equation}
The projection operator $P_{\Dx}$ is then defined as follows.  \vspace{-0.1cm}

\begin{definition}[{\cite[Def. 3.2]{AlphaAlgorithm}}]\label{def:ProjOP}
    Define $P_{\Delta x}\!:\!\D\to \D$ by $P_{\Delta x} \left((u,F) \right)\!=(u_{\Delta x},F_{\Delta x})$, where
    \begin{align}
    u_{\Delta x}(x) &= \begin{cases}
    u_{2j} + \left(Du_{2j} \mp  q_{2j}\right)(x-x_{2j}), & x_{2j} < x \leq x_{2j+1}, \\
    u_{2j+2} + \left(Du_{2j} \pm  q_{2j}\right)(x-x_{2j+2}), & x_{2j+ 1} < x \leq x_{2j+2}, 
    \end{cases}
    \label{eq:Proj_u} \end{align}
    and 
    \begin{equation}
      F_{\Delta x}(x) = F_{\Delta x, \mathrm{ac}}(x) + F_{\Delta x, \mathrm{sing}}(x),  \hspace{0.45cm} \text{for all } x \in \R. \label{eq:Proj_F} 
    \end{equation}
   Moreover, the absolutely continuous part of $F_{\Delta x}$ is given by 
    \begin{align}
    F_{\Delta x, \mathrm{ac}}(x) &= \begin{cases} F_{\mathrm{ac}, 2j} + \left(Du_{2j} \mp q_{2j}\right)^2 \left(x - x_{2j} \right)\!, & x_{2j} < x \leq x_{2j+1}, \\ 
    \frac{1}{2}\left(F_{\mathrm{ac}, 2j+2}+ F_{\mathrm{ac}, 2j}\right) \mp 2q_{2j}Du_{2j} \Delta x \\ \quad + \left(Du_{2j} \pm q_{2j} \right)^2(x-x_{2j+1}), & x_{2j+1} < x \leq x_{2j+2},
    \end{cases}
    \label{eq:Proj_Fac} \end{align} 
    while the singular part is defined as
    \begin{align}\label{eq:projSingular} 
    F_{\Delta x, \mathrm{sing}}(x) &= F_{2j+2} - F_{ \mathrm{ac}, 2j+2}=F_{\mathrm{sing}, 2j+2}, \qquad  x_{2j} < x \leq x_{2j+2}. 
    \end{align}
\end{definition} 

This particular projection operator is tailored to preserve the total energy $F_{\infty}$, the continuity of $u$, and Definition~\ref{def:EulerSet}~\ref{def:EulerCond3} -- it therefore maps $\D$ into itself. 

The observant reader may have noticed that there is an ambiguity in \eqref{eq:Proj_u} and \eqref{eq:Proj_Fac}, i.e., a choice of sign that has to be made. In the upcoming analysis the choice of sign is irrelevant, but in practical applications one would typically impose a sign-selection criterion, see \cite[Sec. 3.1]{AlphaAlgorithm} for further details. 

Finally, the numerical solution is defined in the following way. 

\begin{definition}[{\cite[Def. 3.1]{AlphaAlgorithm}}]The numerical solution $(u_{\Dx}, F_{\Dx})(t)$ at $t\geq 0$ is for any $(\bar{u}, \bar{F}) \in \D$ defined by 
\begin{equation*}
	(u_{\Dx}, F_{\Dx})(t) := T_t \circ P_{\Dx}((\bar{u}, \bar{F})).
\end{equation*}
\end{definition}

The interested reader is referred to \cite[Sec. 3.2]{AlphaAlgorithm} for a detailed account about the numerical implementation of the solution operator $T_t$. Here we would like to emphasize that the mapping $L$, defined by \eqref{eq:MapL}, maps the uniform mesh in Eulerian coordinates $\{x_j \}_{j \in \mathbb{Z}}$ into a nonuniform discretization in Lagrangian coordinates $\{\xi_j \}_{j \in \mathbb{Z}}$, where each pair $[x_{2j}, x_{2j+1}]$ and $[x_{2j+1}, x_{2j+2}]$ possibly is mapped into three Lagrangian gridcells with endpoints given by 
\begin{align} \label{eq:lagr_grid}
\begin{aligned}
	\xi_{3j} &= x_{2j} + \bar{F}_{\Dx}(x_{2j}), \hspace{1.4cm} \xi_{3j+1} = x_{2j} + \bar{F}_{\Dx}(x_{2j}+), \\ 
	\xi_{3j+2} &= x_{2j+1} + \bar{F}_{\Dx}(x_{2j+1}), \hspace{0.7cm}
	\xi_{3j+3} = x_{2j+2} + \bar{F}_{\Dx}(x_{2j+2}).
\end{aligned}
\end{align}
The notation $x_{2j}+$ is used to denote the right limit at $x=x_{2j}$ and we observe that $\xi_{3j+1}\! = \xi_{3j}$ if $\bar{F}_{\Dx}$ does not admit a jump discontinuity at $x=x_{2j}$. Furthermore, \eqref{eq:MapL} and Definition~\ref{def:ProjOP} imply that the numerical Lagrangian initial data,  
\begin{equation}\label{eq:projectedLagr}
	\bar{X}_{\Dx} = L \circ P_{\Dx}((\bar{u}, \bar{\mu})), 
\end{equation}
is continuous and piecewise linear with nodes situated at $\{\xi_j \}_{j \in \mathbb{Z}}$. The value of $\bar{X}_{\Dx}(\xi)$, for any $\xi \in \R$, can therefore be computed with linear interpolation based on the values attained at $\{\xi_j\}_{j \in \mathbb{Z}}$. As a consequence, the derivatives $\bar{X}_{\Dx, \xi}=(\bar{y}_{\Dx, \xi}, \bar{U}_{\Dx, \xi}, \bar{V}_{\Dx, \xi})$ are piecewise constant functions that may have jump discontinuities at the Lagrangian gridpoints $\{\xi_j\}_{j \in \mathbb{Z}}$. Furthermore, the numerical wave breaking function $\tau_{\Dx}$ is computed via \eqref{eq:waveBreaking}, when we replace $\bar{X}$ with the projected data $\bar{X}_{\Dx}$ from \eqref{eq:projectedLagr}. Consequently, one infers that the times and points for which wave breaking occurs for the exact and numerical solution in general differ. This complicates the upcoming error analysis.

\section{Convergence rate initially in Eulerian coordinates}\label{sec:Convergenceux}
Assume from now on that $\Dx \leq 1$. We start by  investigating the rate of convergence for the projected initial data $(\bar{u}_{\Dx}, \bar{F}_{\Dx}) = P_{\Dx} ((\bar{u}, \bar{F}))$. To this end, let us recall the following result.

\begin{prop}[{\cite[Prop. 4.1 and Lem. 4.2]{AlphaAlgorithm}}]\label{prop:ProjEstimates}
    Given $(\bar{u}, \bar{F}) \in \D$, let $(\bar{u}_{\Delta x}, \bar{F}_{\Delta x})=P_{\Delta x} \left((\bar{u}, \bar{F}) \right)$, then \vspace{-0.1cm}
 \begin{subequations}
    \begin{align}
        \|\bar{u} - \bar{u}_{\Delta x}\|_{\infty} & \leq \left(1 + \sqrt{2} \right)\!F_{\mathrm{ac}, \infty}^{\nicefrac{1}{2}} \Delta x^{\nicefrac{1}{2}}, \label{eq:Proj_u_infty}\\
        \|\bar{u} - \bar{u}_{\Delta x}\|_{2} & \leq \sqrt{2}\left(1+ \sqrt{2} \right) \!F_{\mathrm{ac}, \infty}^{\nicefrac{1}{2}} \Delta x, \label{eq:Proj_u_L2}  \\ 
        \| \bar{F} - \bar{F}_{\Delta x}\|_p &\leq 2 F_{\infty}\Delta x^{\nicefrac{1}{p}},  \quad \mathrm{for} \hspace{0.17cm} p=1,2.\label{eq:Proj_F_Lp}
        \end{align}
    \end{subequations}
    Furthermore, as $\Dx \rightarrow 0$ we have \vspace{-0.1cm}
        \begin{equation}
        		\bar{u}_{\Dx, x} \rightarrow \bar{u}_x \text{ in }  L^2(\R) \label{eq:Proj_ux}.
         \end{equation}
\end{prop}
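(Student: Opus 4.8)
\emph{Proof sketch.} The plan is to exploit that $P_{\Dx}$ acts independently on the cells $I_j := [x_{2j}, x_{2j+2}]$, which partition $\R$, and to work throughout with the local energy $E_j := F_{\mathrm{ac},2j+2} - F_{\mathrm{ac},2j} = \mu_{\mathrm{ac}}(I_j)$, for which $\sum_{j \in \mathbb{Z}} E_j = F_{\mathrm{ac},\infty}$. I would first record two elementary consequences of \eqref{eq:square_root}: since $Du_{2j}$ is the average of $u_x$ over $I_j$ while $DF_{\mathrm{ac},2j} = E_j/(2\Dx)$ is its average square, Jensen's inequality gives $(Du_{2j})^2 \le DF_{\mathrm{ac},2j}$, so that $q_{2j}$ is real; and from $(Du_{2j})^2 + q_{2j}^2 = DF_{\mathrm{ac},2j}$ the two half-cell slopes $Du_{2j}\mp q_{2j}$ and $Du_{2j}\pm q_{2j}$ in \eqref{eq:Proj_u} each have square at most $2DF_{\mathrm{ac},2j} = E_j/\Dx$. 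Combined with the local H\"older bound $|u(x)-u(y)| = \big|\int_y^x u_x(z)\,dz\big| \le |x-y|^{\nicefrac12} E_j^{\nicefrac12}$ for $x,y \in I_j$, this shows that on $I_j$ both $u$ and $u_{\Dx}$ oscillate by at most of order $(E_j\Dx)^{\nicefrac12}$.

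For \eqref{eq:Proj_u_infty}: on the left half-cell $[x_{2j},x_{2j+1}]$ the difference $u - u_{\Dx}$ vanishes at $x_{2j}$ and on the right half-cell at $x_{2j+2}$, so estimating $|u - u_{\Dx}|$ against the value at that endpoint by the two oscillation bounds gives $\|u - u_{\Dx}\|_{L^{\infty}(I_j)} \le 2(E_j\Dx)^{\nicefrac12}$ (in particular no larger than $(1+\sqrt{2})(E_j\Dx)^{\nicefrac12}$); taking the supremum over $j$ and using $E_j \le F_{\mathrm{ac},\infty}$ yields the claim. Squaring this local bound, integrating over each $I_j$ (of length $2\Dx$), and summing, with $\sum_j E_j = F_{\mathrm{ac},\infty}$, gives \eqref{eq:Proj_u_L2}.

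For \eqref{eq:Proj_F_Lp} the structural fact I would establish from Definition~\ref{def:ProjOP} is that $F_{\Dx}$ is again nondecreasing, preserves the total mass $F_\infty$ (as stated after Definition~\ref{def:ProjOP}), and agrees with $F$ at every even node $x_{2j}$. This is the step I expect to cost the most bookkeeping: one checks from \eqref{eq:Proj_Fac} that $F_{\Dx,\mathrm{ac}}$ is continuous and nondecreasing on $I_j$ with $F_{\Dx,\mathrm{ac}}(x_{2j}) = F_{\mathrm{ac},2j}$ and $F_{\Dx,\mathrm{ac}}(x_{2j+2}) = F_{\mathrm{ac},2j+2}$ (the inequality $2|Du_{2j}|q_{2j} \le (Du_{2j})^2 + q_{2j}^2$ that keeps it between these values is just AM--GM), while by \eqref{eq:projSingular} $F_{\Dx,\mathrm{sing}}$ is the nondecreasing step function equal to $F_{\mathrm{sing},2j+2}$ on $(x_{2j},x_{2j+2}]$. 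Granting this, both $F$ and $F_{\Dx}$ map $I_j$ into $[F(x_{2j}),F(x_{2j+2})]$, so $|F - F_{\Dx}| \le \Delta F_j := F(x_{2j+2}) - F(x_{2j})$ on $I_j$; since $\sum_j \Delta F_j = F_\infty$ and $\Delta F_j \le F_\infty$, integrating over each cell gives $\|F - F_{\Dx}\|_1 \le 2\Dx \sum_j \Delta F_j = 2 F_\infty \Dx$ and $\|F - F_{\Dx}\|_2^2 \le 2\Dx \sum_j \Delta F_j^2 \le 2 F_\infty^2 \Dx$, which is \eqref{eq:Proj_F_Lp}.

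Finally, for the convergence \eqref{eq:Proj_ux} I would exploit that $P_{\Dx}$ is energy-preserving: integrating over each half-cell, $\|u_{\Dx,x}\|_2^2 = \sum_j \big[(Du_{2j}\mp q_{2j})^2 + (Du_{2j}\pm q_{2j})^2\big]\Dx = \sum_j 2\Dx\,DF_{\mathrm{ac},2j} = \sum_j E_j = F_{\mathrm{ac},\infty} = \|u_x\|_2^2$ for every $\Dx$. Hence $\{u_{\Dx,x}\}_{\Dx>0}$ is bounded in $L^2(\R)$, so it has weakly convergent subsequences, and testing against $\varphi \in C_c^{\infty}(\R)$ together with the uniform convergence $u_{\Dx}\to u$ from \eqref{eq:Proj_u_infty} forces every weak subsequential limit to equal $u_x$; thus $u_{\Dx,x} \rightharpoonup u_x$ in $L^2(\R)$ as $\Dx\to 0$. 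Since moreover $\|u_{\Dx,x}\|_2 = \|u_x\|_2$, weak convergence together with convergence of the norms in the Hilbert space $L^2(\R)$ upgrades to strong convergence, which is \eqref{eq:Proj_ux}. The one genuinely technical point in the whole argument is thus the structural verification for $F_{\Dx}$ behind \eqref{eq:Proj_F_Lp}; the other three estimates reduce to the localized H\"older modulus of $u$ controlled by $E_j$ and to the exact energy identity, both of which are engineered into the definition of $P_{\Dx}$.
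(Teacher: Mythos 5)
Your argument is correct. Note that the paper quotes this proposition from \cite{AlphaAlgorithm} without reproducing its proof, so there is no in-paper proof to compare against; but all four steps of your sketch are sound, and the ingredients you identify -- agreement of $u$ and $u_{\Dx}$ at the even nodes $x_{2j}$, the half-cell slope bound $(Du_{2j}\mp q_{2j})^2 \leq 2\,DF_{\mathrm{ac},2j}$, the local H\"older modulus $|u(x)-u(y)| \leq |x-y|^{1/2}E_j^{1/2}$, the fact that $F_{\Dx}$ is nondecreasing and coincides with $F$ at every $x_{2j}$, and exact energy preservation $\|u_{\Dx,x}\|_2 = \|u_x\|_2$ feeding the Radon--Riesz upgrade from weak to strong $L^2$ convergence -- are precisely what the definition of $P_{\Dx}$ is engineered to provide. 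Incidentally, your constants are a touch sharper than the quoted ones: you obtain $2$ in place of $1+\sqrt{2}$ in \eqref{eq:Proj_u_infty}, $2\sqrt{2}$ in place of $\sqrt{2}(1+\sqrt{2})$ in \eqref{eq:Proj_u_L2}, and $\sqrt{2}$ in place of $2$ for $p=2$ in \eqref{eq:Proj_F_Lp}; the stated constants arise from bounding the half-cell slope by $|Du_{2j}| + q_{2j} \leq 2\sqrt{DF_{\mathrm{ac},2j}}$ rather than the sharper $(a\mp b)^2 \leq 2(a^2+b^2)$ that you used, and since $2 < 1+\sqrt{2}$ your bounds imply the stated ones.
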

The aim of this section is to strengthen \eqref{eq:Proj_ux}, namely derive a bound of the form $\|\bar{u}_x - \bar{u}_{\Dx, x} \|_2 \!\leq \mathcal{O}(\Dx^{\gamma})$ for some $\gamma > 0$. This will be essential in the next section when we examine how the time evolution in Lagrangian coordinates affects the initial convergence rate in \eqref{eq:Proj_u_infty}. In particular, if we lack a convergence rate for $\bar{u}_{\Dx, x}$, then the initial order of convergence from \eqref{eq:Proj_u_infty} will be completely deteriorated by the solution operator $S_t$ from Definition~\ref{def:lagrangianSolution} for $\alpha$-dissipative solutions. This derives from the fact that, for any $t > 0$, the set \vspace{-0.1cm}
\begin{align*}
	E_t &:=  \Bigl \{ \xi \in \R: \bar{u}_x(y(\xi)) = - \frac{2}{t} \Bigr \}, 
\end{align*}
completely determines all the characteristics that are involved in wave breaking at time $t>0$ in Lagrangian coordinates, and thereby the amount of energy removed at this time upon wave breaking. The set \vspace{-0.1cm}
\begin{equation*}
	E_{\Dx, t} := \Bigl \{ \xi \in \R:  \bar{u}_{\Dx, x}(y_{\Dx}(\xi)) = - \frac{2}{t} \! \Bigr \}, 
\end{equation*}
plays a similar role for the numerical solution. Thus, in order to avoid that the error between the exact- and numerical Lagrangian solution becomes too large, which would prevent us from obtaining a convergence rate for $t > 0$, one needs some way of controlling the difference in measure between $E_s$ and $E_{\Dx, s}$ for all $s \in (0, t]$. This is provided by the stronger $L^2(\R)$-convergence rate for $\bar{u}_{\Dx, x}$. 

Since $\bar{u}_{\Dx}$, given by \eqref{eq:Proj_u}, is piecewise linear,  we are approximating $\bar{u}_x$ by the piecewise constant function, $\bar{u}_{\Dx, x}$. A convergence rate in $L^2(\R)$ is therefore only obtainable if we impose additional regularity on $\bar{u}_x$. 

The following family of spaces \vspace{-0.075cm}
\begin{align}\label{eq:LipSpaces}
	B_2^{\beta}\!:= \! \left\{f \!\in \! L^2(\R)\!: \exists \hspace{0.05cm} C\! > 0 \text{ s.t. }\! \|\delta_hf \|_2 \! \leq C h^{\beta} \hspace{0.05cm} \text{for all } h \in (0, 2]\right\}\!, 
\end{align}
appears naturally in this context. Here $\beta \in (0, 1]$ is fixed and $\delta_{h}f(x) := f(x +h)-f(x)$. Furthermore, the smallest admissible constant $C>0$ is denoted by $|f|_{2, \beta}$ and defines a seminorm, namely 
\begin{equation}\label{eq:semiNorm}
		|f|_{2, \beta} := \sup_{t \in (0, 2]}  \Big( t^{-\beta} \sup_{h \in [0, t]}\|\delta_hf \|_2 \Big).
\end{equation} 
These spaces are closely related to the {\em Besov spaces}, $B_{\infty}^{\beta}(L^2(\R))$, and the {\em Lipschitz spaces}, $\mathrm{Lip}(\beta, L^2(\R))$, see e.g., \cite{BogachevNonlinearInt, NonlinearApproxBook, ConstructiveApprox, Nikolskii}. Moreover, $B_2^{\beta}$ contains all the $L^2(\R)$-functions that optimally can be approximated with order $\mathcal{O}(\Dx^{\beta})$ by piecewise constant functions on equidistant grids, see \cite[Sec. 3]{NonlinearApproxBook} and \cite[Chp. 12]{ConstructiveApprox}. Hence these spaces are well-suited in our setting. Also note that we have restricted ourselves to $\beta \leq 1$, because when $\beta > 1$, \eqref{eq:semiNorm} is finite only for functions that are almost everywhere constant, see \cite[Chp. 2]{ConstructiveApprox}. 
 
\begin{lemma}\label{lem:Rateux}
    For $\beta \in (0, 1]$ and $(\bar{u}, \bar{\mu}) \in \D$ with $\bar{u}_{x} \in B_2^{\beta}$, let $(\bar{u}_{\Dx}, \bar{F}_{\Dx}) = P_{\Dx}((\bar{u}, \bar{F}))$, then there exists a constant $D$ dependent on $\beta$, $F_{\mathrm{ac}, \infty}$ and $|\bar{u}_x|_{2, \beta}$ such that \label{lem:Rateux1} 
    	\begin{equation*}
        \|\bar{u}_x - \bar{u}_{\Delta x, x} \|_2 \leq D\Delta x^{\nicefrac{\beta}{2}}.
            \end{equation*}
\end{lemma}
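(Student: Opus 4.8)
The plan is to estimate $\|u_x - u_{\Delta x, x}\|_2$ by splitting it into two contributions: the error coming from averaging $u_x$ over gridcells, and the error coming from the $q_{2j}$-corrections in the projection. First I would rewrite $u_{\Delta x, x}$ explicitly using \eqref{eq:Proj_u}; on each cell $(x_{2j}, x_{2j+1}]$ it equals the constant $Du_{2j} \mp q_{2j}$, and on $(x_{2j+1}, x_{2j+2}]$ the constant $Du_{2j} \pm q_{2j}$. Note $Du_{2j} = \frac{1}{2\Dx}\int_{x_{2j}}^{x_{2j+2}} u_x\,dx$ is exactly the average of $u_x$ over the double cell $[x_{2j}, x_{2j+2}]$ (of length $2\Dx$). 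Denote by $\Pi_{\Dx} u_x$ the piecewise-constant function that equals this average on each double cell; then $u_{\Dx, x} = \Pi_{\Dx}u_x + r_{\Dx}$, where $r_{\Dx}$ is the piecewise-constant function taking values $\mp q_{2j}$ and $\pm q_{2j}$ on the two halves of each double cell. By orthogonality considerations (or just the triangle inequality) it suffices to bound $\|u_x - \Pi_{\Dx} u_x\|_2$ and $\|r_{\Dx}\|_2$ separately.

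For the first term I would invoke the standard approximation estimate for $B_2^{\beta}$: since $\Pi_{\Dx}u_x$ is the $L^2$-projection of $u_x$ onto piecewise constants on the equidistant grid $\{x_{2j}\}$ of meshwidth $2\Dx$, and $u_x \in B_2^{\beta}$, one has $\|u_x - \Pi_{\Dx}u_x\|_2 \leq C_\beta |u_x|_{2,\beta}(2\Dx)^{\beta}$. If the excerpt's cited references (\cite{NonlinearApproxBook, ConstructiveApprox}) are to be used as a black box this is immediate; otherwise I would give the two-line argument: on a cell $I$ of length $\ell$, $\|u_x - \bar u_x^I\|_{L^2(I)}^2 = \frac{1}{2\ell}\int_I\int_I |u_x(x)-u_x(y)|^2\,dx\,dy \le \frac{1}{2\ell}\int_I\int_{-\ell}^{\ell}|\delta_h u_x(x)|^2\,dh\,dx$ extended appropriately, then sum over cells and use the definition \eqref{eq:semiNorm} of $|u_x|_{2,\beta}$, giving the $\Dx^{\beta}$ rate; since $\beta/2 \le \beta$ for $\Dx\le 1$ this is more than enough.

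The second term, $\|r_{\Dx}\|_2$, is the genuine obstacle and is where the $\Dx^{\beta/2}$ (rather than $\Dx^{\beta}$) rate originates. We have $\|r_{\Dx}\|_2^2 = \sum_j 2\Dx\, q_{2j}^2 = 2\Dx \sum_j \left(DF_{\mathrm{ac},2j} - (Du_{2j})^2\right)$ by \eqref{eq:square_root}. Now $DF_{\mathrm{ac},2j} = \frac{1}{2\Dx}\int_{x_{2j}}^{x_{2j+2}} u_x^2\,dx$ is the average of $u_x^2$ over the double cell, while $(Du_{2j})^2 = (\bar u_x^{I_j})^2$ is the square of the average; so $DF_{\mathrm{ac},2j} - (Du_{2j})^2$ is exactly the variance of $u_x$ over the double cell $I_j$, i.e. $\frac{1}{2\Dx}\int_{I_j}(u_x - \bar u_x^{I_j})^2\,dx$. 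Hence
\begin{equation*}
	\|r_{\Dx}\|_2^2 = 2\Dx \sum_j \frac{1}{2\Dx}\int_{I_j}(u_x - \bar u_x^{I_j})^2\,dx = \|u_x - \Pi_{\Dx}u_x\|_2^2 \leq \big(C_\beta |u_x|_{2,\beta}(2\Dx)^{\beta}\big)^2.
\end{equation*}
Therefore $\|r_{\Dx}\|_2 \leq C_\beta |u_x|_{2,\beta}(2\Dx)^{\beta}$ as well — so in fact I would expect the rate $\Dx^{\beta}$ here too, and the stated $\Dx^{\beta/2}$ would follow a fortiori (with room to spare, since $\Dx \le 1$). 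The point I must be careful about is that \eqref{eq:Proj_Fac} defines $F_{\Dx,\mathrm{ac}}$ so that $DF_{\Dx,\mathrm{ac},2j}$ is genuinely the gridcell average of $u_x^2$, i.e. that $F_{\mathrm{ac},2j+2} - F_{\mathrm{ac},2j} = \int_{x_{2j}}^{x_{2j+2}}u_x^2\,dx$, which holds because $d\mu_{\mathrm{ac}} = u_x^2\,dx$ by Definition~\ref{def:EulerSet}\ref{def:EulerCond3} and the projection preserves $F_{\mathrm{ac},\infty}$ cell by cell. Combining the two bounds via the triangle inequality, $\|u_x - u_{\Dx,x}\|_2 \le \|u_x - \Pi_{\Dx}u_x\|_2 + \|r_{\Dx}\|_2 \le 2C_\beta|u_x|_{2,\beta}(2\Dx)^{\beta} \le D\Dx^{\beta/2}$ for a constant $D$ depending on $\beta$, $|u_x|_{2,\beta}$ (and, where the crude bound $q_{2j}^2 \le DF_{\mathrm{ac},2j} \le \frac{1}{2\Dx}F_{\mathrm{ac},\infty}$ is used as a fallback for boundary cells, on $F_{\mathrm{ac},\infty}$). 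The main obstacle is thus purely bookkeeping: identifying $q_{2j}^2$ with a local variance and handling the sign ambiguity in \eqref{eq:Proj_u} (which is harmless since $q_{2j}^2$ is sign-independent and each double cell contributes $2\Dx\, q_{2j}^2$ regardless).
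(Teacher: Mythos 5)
Your proof is correct, and it actually establishes a sharper estimate than the lemma (and the paper) claims: $\|u_x - u_{\Dx,x}\|_2 \leq \mathcal{O}(\Dx^{\beta})$, not merely $\mathcal{O}(\Dx^{\beta/2})$. The key observation you make — that $q_{2j}^2 = DF_{\mathrm{ac},2j} - (Du_{2j})^2$ is exactly the local variance of $u_x$ over $I_j = [x_{2j},x_{2j+2}]$, so that $\|r_{\Dx}\|_2^2 = \sum_j 2\Dx\, q_{2j}^2 = \|u_x - \Pi_{\Dx}u_x\|_2^2$ — collapses the two contributions into a single standard piecewise-constant approximation estimate, each $\mathcal{O}(\Dx^{2\beta})$ in squared $L^2$-norm. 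The paper takes a genuinely different route for the $q_{2j}$-contribution: starting from $2\sum_j\int_{I_j}(DF_{\mathrm{ac},2j}-(Du_{2j})^2)\,dx$ (which, as you note, equals $2\|r_{\Dx}\|_2^2$), it inserts $u_x^2$ and estimates $\sum_j\int|DF_{\mathrm{ac},2j}-u_x^2|\,dx$ and $\sum_j\int|u_x^2-(Du_{2j})^2|\,dx$ separately via \eqref{eq:L1_normLip}-type $L^1$-bounds, each of order $\Dx^{\beta}$ (not $\Dx^{2\beta}$). Taking a square root then yields only $\Dx^{\beta/2}$. Your variance identity shows this triangle-inequality splitting discards a factor: $\sum_j\int(DF_{\mathrm{ac},2j}-(Du_{2j})^2)\,dx$ is already controlled by the paper's own \eqref{eq:ux_est1} at rate $\Dx^{2\beta}$, and in particular the Remark following the lemma — which attributes the suboptimality to the $q_{2j}$-term — is not correct as stated: the $q_{2j}$-term is exactly the same order as the averaging error. (To be fair to the paper, the $L^1$-bounds \eqref{eq:ux_est2_1}--\eqref{eq:ux_est2_2_1} are reused later in the proof of Lemma~\ref{lem:rateHxi}, cf.\ \eqref{eq:estEnergyDiff}, where an $L^1$-type quantity genuinely appears; so they are not wasted, just unnecessary for Lemma~\ref{lem:Rateux} itself.) Two small remarks on your write-up: your allusion to "boundary cells" is moot since the domain is all of $\R$, so the fallback bound via $F_{\mathrm{ac},\infty}$ is never invoked and your constant $D$ depends only on $\beta$ and $|u_x|_{2,\beta}$; and what you write as $DF_{\Dx,\mathrm{ac},2j}$ should read $DF_{\mathrm{ac},2j}$ — the difference quotient in \eqref{eq:square_root} is built from the \emph{exact} $F_{\mathrm{ac}}$, not the projected one, and the identity $F_{\mathrm{ac},2j+2}-F_{\mathrm{ac},2j}=\int_{x_{2j}}^{x_{2j+2}}u_x^2\,dx$ is immediate from Definition~\ref{def:EulerSet}~\ref{def:EulerCond3}.
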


\newcommand\myiineq{\stackrel{\mathclap{\normalfont\mbox{\eqref{eq:LipSpaces}}}}{\leq}}

\begin{proof} 
   Combining \eqref{eq:Proj_u} with the estimate $(a+b)^2 \leq 2 (a^2 + b^2)$ where $a = \bar{u}_x - D\bar{u}_{2j}$ and $b=\pm q_{2j}=\pm \sqrt{D\bar{F}_{\mathrm{ac}, 2j} - (D\bar{u}_{2j})^2}$ leads to 
    \begin{subequations}
    \begin{align}
        \|\bar{u}_x - \bar{u}_{\Delta x, x} \|_2^2 &\leq 2\sum_{j \in \mathbb{Z}} \int_{x_{2j}}^{x_{2j+2}} \left(\bar{u}_x(x) - D\bar{u}_{2j} \right)^2dx \label{eq:con_ux1}
        \\ & \qquad + 2\sum_{j \in \mathbb{Z}} \int_{x_{2j}}^{x_{2j+2}} \left(D\bar{F}_{\mathrm{ac}, 2j} - (D\bar{u}_{2j})^2 \right)dx, \label{eq:con_ux2}
    \end{align}
    \end{subequations}
    and by further decomposing \eqref{eq:con_ux2} we get
     \begin{subequations}\label{eq:con_ux3}
\begin{align}
    \sum_{j \in \mathbb{Z}} \int_{x_{2j}}^{x_{2j+2}} \left(D\bar{F}_{\mathrm{ac}, 2j} - (D\bar{u}_{2j})^2 \right)dx &\leq \sum_{j \in \mathbb{Z}} \int_{x_{2j}}^{x_{2j+2}} \left|D\bar{F}_{\mathrm{ac}, 2j} - \bar{u}_x^2(x) \right|dx \label{eq:con_ux3_1}\\ & \qquad + \sum_{j \in \mathbb{Z}} \int_{x_{2j}}^{x_{2j+2}} \left|\bar{u}_x^2(x) - (D\bar{u}_{2j})^2 \right|dx.  \label{eq:con_ux3_2}
\end{align}
\end{subequations}

We start by estimating \eqref{eq:con_ux1}, to which we apply the Cauchy--Schwarz (CS) inequality and argue as follows    
\begin{align}
    \sum_{j \in \mathbb{Z}} \int_{x_{2j}}^{x_{2j+2}} \left(\bar{u}_x(x) - D\bar{u}_{2j} \right)^2dx \nonumber 
    &= \sum_{j \in \mathbb{Z}} \int_{x_{2j}}^{x_{2j+2}} \bigg(\frac{1}{2\Delta x} \int_{x_{2j}}^{x_{2j+2}}(\bar{u}_x(x) - \bar{u}_x(z))dz \bigg)^{\!2}\!dx  \nonumber 
    \\ & \myineq  \sum_{j \in \mathbb{Z}} \int_{x_{2j}}^{x_{2j+2}} \frac{1}{2\Delta x} \int_{x - 2\Delta x}^{x + 2\Delta x}|\bar{u}_x(x) - \bar{u}_x(z)|^2dz dx \nonumber 
    \\ & = \sum_{j \in \mathbb{Z}} \int_{x_{2j}}^{x_{2j+2}} \! \! \frac{1}{2\Delta x} \int_{-2\Delta x}^{2\Delta x} \!|\bar{u}_x(x) - \bar{u}_x(x + h)|^2dh dx \nonumber 
    \\&\myeq \frac{1}{2\Dx} \int_{-2\Dx}^{2\Dx} \|\delta_h\bar{u}_x \|_2^2dh \nonumber 
    \\ &\!\myiineq \frac{1}{2\Delta x} \int_{-2\Delta x}^{2\Delta x} \!|\bar{u}_x|_{2, \beta}^2|h|^{2\beta}dh = \frac{2^{1+2\beta}|\bar{u}_x|_{2, \beta}^2}{2\beta + 1}\Delta x^{2\beta}.
    \label{eq:ux_est1}
\end{align}

Note that \eqref{eq:con_ux3_1} can be recast on the form 
\begin{equation*}
	 \sum_{j \in \mathbb{Z}} \int_{x_{2j}}^{x_{2j+2}} \left|D\bar{F}_{\mathrm{ac}, 2j} - \bar{u}_x^2(x) \right|dx =  \sum_{j \in \mathbb{Z}} \int_{x_{2j}}^{x_{2j+2}} \! \bigg|\frac{1}{2\Dx} \!\int_{x_{2j}}^{x_{2j+2}} \! \!\left(\bar{u}_x^2(z) - \bar{u}_x^2(x)\right) \!dz \bigg|dx,
\end{equation*}
and, in addition, by the CS inequality, 
\begin{equation}
    \|\delta_h\bar{u}_x^2\|_1 \leq \|\bar{u}_x(\cdot +h) + \bar{u}_x(\cdot) \|_2 \|\delta_h\bar{u}_x\|_2 \leq 2\bar{F}_{\mathrm{ac}, \infty}^{\nicefrac{1}{2}}|\bar{u}_x|_{2, \beta}|h|^{\beta}, 
    \label{eq:L1_normLip}
\end{equation}
which in combination with \eqref{eq:ux_est1} implies 
\begin{equation}
    \sum_{j \in \mathbb{Z}} \int_{x_{2j}}^{x_{2j+2}} \left|D\bar{F}_{\mathrm{ac}, 2j} - \bar{u}_x^2(x) \right|dx \leq \frac{2^{2+\beta}\bar{F}_{\mathrm{ac}, \infty}^{\nicefrac{1}{2}}|\bar{u}_x|_{2, \beta}}{\beta + 1}\Delta x^{\beta}. 
    \label{eq:ux_est2_1}
\end{equation}
It therefore remains to consider \eqref{eq:con_ux3_2}. By proceeding as in \eqref{eq:L1_normLip}, we find 
\begin{align}\label{eq:ux_est2_2_1}
    \sum_{j \in \mathbb{Z}} \int_{x_{2j}}^{x_{2j+2}} &\bigl |\bar{u}_x^2(x) - (D\bar{u}_{2j})^2 \bigr|dx \nonumber \\
    &\hspace{-0.35cm}\leq \! \bigg(\sum_{j \in \mathbb{Z}} \int_{x_{2j}}^{x_{2j+2}} \! \!\left|\bar{u}_x(x) - D\bar{u}_{2j} \right|^2 \!dx\! \bigg)^{\nicefrac{1}{2}} \! \bigg(\sum_{j \in \mathbb{Z}} \int_{x_{2j}}^{x_{2j+2}} \! \! \left|\bar{u}_x(x) + D\bar{u}_{2j}\right|^2 \!dx\! \bigg)^{\nicefrac{1}{2}} \nonumber
    \\ &\hspace{-0.35cm} \leq  2^{1+\beta}\sqrt{\frac{2}{2\beta + 1}} \bar{F}_{\mathrm{ac}, \infty}^{\nicefrac{1}{2}}|\bar{u}_x|_{2, \beta}\Delta x^{\beta}\!, 
\end{align}
where we used that 
\begin{align*}
   \sum_{j \in \mathbb{Z}} \int_{x_{2j}}^{x_{2j+2}}|&\bar{u}_x(x) + D\bar{u}_{2j}|^2dx \nonumber \\  
   & \leq 2 \sum_{j \in \mathbb{Z}} \int_{x_{2j}}^{x_{2j+2}}\bar{u}_x^2(x)dx + 2 \sum_{j \in \mathbb{Z}} \int_{x_{2j}}^{x_{2j+2}}\biggl(\frac{1}{2\Delta x}\int_{x_{2j}}^{x_{2j+2}}\bar{u}_x(z)dz \biggr)^{\!2} dx \nonumber \\ 
    & \leq 2\bar{F}_{\text{ac}, \infty} + 2 \sum_{j \in \mathbb{Z}} \frac{1}{2\Delta x} \int_{x_{2j}}^{x_{2j+2}} \int_{x_{2j}}^{x_{2j+2}}\bar{u}_x^2(z)dz dx = 4\bar{F}_{\mathrm{ac}, \infty}.
\end{align*}
A combination of \eqref{eq:ux_est1} and \eqref{eq:ux_est2_1}--\eqref{eq:ux_est2_2_1} now yields the result.
\end{proof}

A closer inspection of the above proof, reveals that all the terms that appear when estimating $\|\bar{u}_x - \bar{u}_{\Dx, x}\|_2$ involve the factor $\|\delta_h \bar{u}_x\|_2$. It is therefore to be expected, that the convergence rate in Lemma~\ref{lem:Rateux} deteriorates with decreasing values of $\beta$, because $\bar{u}_x$ behaves more poorly with respect to $L^2$-translates as $\beta$ becomes smaller.

\begin{remark}
	As functions in $B_2^{\beta}$ optimally can be approximated of order $\mathcal{O}(\Dx^{\beta})$ by piecewise constants, the convergence rate obtained in Lemma~\ref{lem:Rateux} is suboptimal. The proof reveals that $q_{2j}$, defined in \eqref{eq:square_root}, is the cause of this suboptimality, but this term is crucial in order to ensure that $d\bar{\mu}_{\Delta x, \textrm{ac}}=\bar{u}_{\Delta x, x}^2dx$ is preserved. 
	
	If this property is violated, then as pointed out in \cite{AlphaAlgorithm, NumericalConservative}, the numerical approximations would be solutions of the two-component HS system which is accompanied by a density $\rho$. It has been established in \cite[Thm. 6.1]{Conservative2CH}, that  if $\rho$ is nonzero on an interval $[x_1, x_2]$, then it induces a repulsive force that prevents characteristics starting within $[x_1, x_2]$ from meeting each other such that no wave breaking occurs for these characteristics. 
\end{remark}

\section{Convergence rate in the $\alpha$-dissipative case, $\alpha \in [0, 1]$}\label{sec:ConvergenceRateu}
According to \cite[Lem. 4.11]{AlphaAlgorithm}, we have
\begin{equation}\label{eq:est_u}
	\|u(t) - u_{\Dx}(t)\|_{\infty} \leq \|U(t) - U_{\Dx}(t)\|_{\infty} + \bar{F}_{\infty}^{\nicefrac{1}{2}}\|y(t) - y_{\Dx}(t)\|_{\infty}^{\nicefrac{1}{2}}.
\end{equation}
Hence, in order to derive a convergence rate in $L^{\infty}(\R)$ for the family $\{u_{\Delta x}(t)\}_{\Dx > 0}$, it is enough to obtain a convergence rate for $(y_{\Dx}\!-\!\id, U_{\Dx})(t)$ in $[L^{\infty}(\R)]^2$. 
However, there might exist a set $\mathcal{A}$ of positive measure on which we have no initial convergence rate for $\bar{V}_{\Dx, \xi}$ which is problematic, because both $U_{\Dx}(t)$ and $y_{\Dx}(t)$ heavily depend on $\bar{V}_{\Dx, \xi}$, cf. \eqref{eq:LagrSystem}. To resolve this issue, we introduce a transformation that enables us to treat the error on the set $\mathcal{A}$, while we develop an a priori estimate for the remaining error on $\mathcal{A}^c$. 

Let us start by recalling the following error estimates for the Lagrangian data.   

\begin{lemma}[{\cite[Lem. 4.4]{AlphaAlgorithm}}]\label{lem:convInt}
    Given $(\bar{u}, \bar{\mu}) \in \D$, let $\bar{X} = \left(\bar{y}, \bar{U}, \bar{V} \right)= L \left((\bar{u}, \bar{\mu}) \right)$, and $\bar{X}_{\Delta x}= (\bar{y}_{\Delta x}, \bar{U}_{\Delta x}, \bar{V}_{\Delta x}) = L \circ P_{\Delta x} \left( (\bar{u}, \bar{\mu}) \right)$, then 
    \begin{subequations}
    \begin{align}
        \|\bar{y} - \bar{y}_{\Delta x} \|_{\infty} &\leq 2\Delta x, \label{eq:initial_approx_y}\\
        \|\bar{U} - \bar{U}_{\Delta x} \|_{\infty} & \leq \left(1 + 2\sqrt{2} \right)\!\bar{F}_{\mathrm{ac}, \infty}^{\nicefrac{1}{2}}\Delta x^{\nicefrac{1}{2}},\label{eq:initial_approx_U} \\
        \|\bar{V} - \bar{V}_{\Delta x} \|_{\infty} & \leq 2 \Delta x. \label{eq:initial_approx_H}
    \end{align}
    \end{subequations}
\end{lemma}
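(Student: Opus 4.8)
The plan is to establish \eqref{eq:initial_approx_y} first, then deduce \eqref{eq:initial_approx_H} for free and \eqref{eq:initial_approx_U} by a short triangle‑inequality argument. The crux is one structural fact about the projection of Definition~\ref{def:ProjOP}: evaluating \eqref{eq:Proj_Fac} and \eqref{eq:projSingular} at an even gridpoint gives $\bar F_{\Dx, \mathrm{ac}}(x_{2j}) = \bar F_{\mathrm{ac}, 2j}$ and $\bar F_{\Dx, \mathrm{sing}}(x_{2j}) = \bar F_{\mathrm{sing}, 2j}$, so that
\begin{equation*}
	x_{2j} + \bar F_{\Dx}(x_{2j}) = x_{2j} + \bar F(x_{2j}) \qquad \text{for every } j \in \mathbb{Z};
\end{equation*}
i.e.\ the strictly increasing functions $\phi := \id + \bar F$ and $\phi_{\Dx} := \id + \bar F_{\Dx}$ coincide on the even grid $\{x_{2j}\}_{j \in \mathbb{Z}}$ (this is also the identity $\xi_{3j} = x_{2j} + \bar F_{\Dx}(x_{2j})$ from \eqref{eq:lagr_grid}). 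One records in passing from the case distinctions in Definition~\ref{def:ProjOP} that $\bar F_{\Dx}$ is bounded, nondecreasing and left‑continuous, so that $\bar y_{\Dx}$, defined by \eqref{eq:MapL1} with $(\bar u_{\Dx}, \bar F_{\Dx}) = P_{\Dx}((\bar u, \bar F))$ in place of $(\bar u, \bar F)$, shares the elementary properties of $\bar y$.

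For \eqref{eq:initial_approx_y}, fix $\xi \in \R$ and let $j_0 \in \mathbb{Z}$ be the largest index with $\phi(x_{2j_0}) \le \xi$ (such an index exists since $\phi(x_{2j}) \to \pm \infty$ as $j \to \pm\infty$); by maximality $\xi < \phi(x_{2j_0 + 2})$. I claim $\bar y(\xi)$ and $\bar y_{\Dx}(\xi)$ both lie in $[x_{2j_0}, x_{2j_0 + 2}]$. Indeed, for $x < x_{2j_0}$ one has $\phi(x) < \phi(x_{2j_0}) \le \xi$, and likewise $\phi_{\Dx}(x) < \phi_{\Dx}(x_{2j_0}) = \phi(x_{2j_0}) \le \xi$, so by \eqref{eq:MapL1} both suprema are $\ge x_{2j_0}$; and for $x > x_{2j_0 + 2}$ one has $\phi(x) > \phi(x_{2j_0 + 2}) > \xi$ and $\phi_{\Dx}(x) > \phi_{\Dx}(x_{2j_0 + 2}) = \phi(x_{2j_0 + 2}) > \xi$, so both suprema are $\le x_{2j_0 + 2}$. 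Hence $|\bar y(\xi) - \bar y_{\Dx}(\xi)| \le x_{2j_0 + 2} - x_{2j_0} = 2\Dx$, which is \eqref{eq:initial_approx_y}. Estimate \eqref{eq:initial_approx_H} is then immediate, since $\bar V = \id - \bar y$ and $\bar V_{\Dx} = \id - \bar y_{\Dx}$ by \eqref{eq:MapL3}, so $|\bar V - \bar V_{\Dx}| = |\bar y - \bar y_{\Dx}|$.

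For \eqref{eq:initial_approx_U}, use $\bar U(\xi) = \bar u(\bar y(\xi))$ and $\bar U_{\Dx}(\xi) = \bar u_{\Dx}(\bar y_{\Dx}(\xi))$ together with
\begin{equation*}
	|\bar U(\xi) - \bar U_{\Dx}(\xi)| \le |\bar u(\bar y(\xi)) - \bar u(\bar y_{\Dx}(\xi))| + |\bar u(\bar y_{\Dx}(\xi)) - \bar u_{\Dx}(\bar y_{\Dx}(\xi))|.
\end{equation*}
The second term is at most $\|\bar u - \bar u_{\Dx}\|_{\infty} \le (1 + \sqrt{2})\bar F_{\mathrm{ac}, \infty}^{\nicefrac{1}{2}}\Dx^{\nicefrac{1}{2}}$ by \eqref{eq:Proj_u_infty} of Proposition~\ref{prop:ProjEstimates}. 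For the first term, $\bar u \in E_2$ is locally absolutely continuous with $\bar u' \in L^2(\R)$ and $\|\bar u'\|_2^2 = \int_{\R} \bar u_x^2 \dx = \bar\mu_{\mathrm{ac}}(\R) = \bar F_{\mathrm{ac}, \infty}$ by Definition~\ref{def:EulerSet}~\ref{def:EulerCond3}; hence Cauchy--Schwarz gives the H\"older estimate $|\bar u(a) - \bar u(b)| \le |a - b|^{\nicefrac{1}{2}} \bar F_{\mathrm{ac}, \infty}^{\nicefrac{1}{2}}$, which applied with $|a - b| = |\bar y(\xi) - \bar y_{\Dx}(\xi)| \le 2\Dx$ from \eqref{eq:initial_approx_y} bounds the first term by $\sqrt{2}\,\bar F_{\mathrm{ac}, \infty}^{\nicefrac{1}{2}}\Dx^{\nicefrac{1}{2}}$. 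Adding the two contributions yields $\|\bar U - \bar U_{\Dx}\|_{\infty} \le (1 + 2\sqrt{2})\bar F_{\mathrm{ac}, \infty}^{\nicefrac{1}{2}}\Dx^{\nicefrac{1}{2}}$.

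The only delicate point is the structural identity $\phi(x_{2j}) = \phi_{\Dx}(x_{2j})$, i.e.\ that $P_{\Dx}$ leaves the cumulative energy unchanged at the even gridpoints and keeps $\bar u_{\Dx}$ continuous and $\bar F_{\Dx}$ left‑continuous and nondecreasing; checking this requires care with the half‑open interval conventions and the two sign choices in \eqref{eq:Proj_u}--\eqref{eq:Proj_Fac}, but it is otherwise a direct computation. Everything afterwards is soft: monotonicity of $\id + \bar F$, the definition \eqref{eq:MapL1}, and H\"older-$\nicefrac12$ continuity of $\bar u$.
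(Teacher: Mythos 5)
The paper does not reprove this lemma; it recalls it verbatim from \cite[Lem.~4.4]{AlphaAlgorithm}, so there is no in-paper argument to compare yours against. Judged on its own, your proof is correct. The load-bearing observation — that the slope $(Du_{2j}\mp q_{2j})^2 = DF_{\mathrm{ac},2j}\mp 2q_{2j}Du_{2j}$ and the $\mp 2q_{2j}Du_{2j}\Dx$ offset in \eqref{eq:Proj_Fac} combine so that $\bar F_{\Dx,\mathrm{ac}}(x_{2j})=\bar F_{\mathrm{ac},2j}$, while \eqref{eq:projSingular} gives $\bar F_{\Dx,\mathrm{sing}}(x_{2j})=\bar F_{\mathrm{sing},2j}$, hence $\phi_{\Dx}(x_{2j})=\phi(x_{2j})$ — is exactly right, and you correctly note that $\phi$ and $\phi_{\Dx}$ are strictly increasing so the generalized inverse in \eqref{eq:MapL1} traps both $\bar y(\xi)$ and $\bar y_{\Dx}(\xi)$ in the same cell $[x_{2j_0},x_{2j_0+2}]$, yielding \eqref{eq:initial_approx_y}. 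The deduction of \eqref{eq:initial_approx_H} from $\bar V=\id-\bar y$ is immediate, and the triangle inequality through $\bar u(\bar y_{\Dx}(\xi))$ combined with the $\nicefrac{1}{2}$-H\"older bound $|\bar u(a)-\bar u(b)|\le\|\bar u_x\|_2\,|a-b|^{\nicefrac{1}{2}}=\bar F_{\mathrm{ac},\infty}^{\nicefrac{1}{2}}|a-b|^{\nicefrac{1}{2}}$ and \eqref{eq:Proj_u_infty} gives precisely $(1+\sqrt2)+\sqrt2=1+2\sqrt2$; the constant bookkeeping checks out. The one point you flag but defer — that $\bar F_{\Dx}$ is nondecreasing and left-continuous — does need the half-open convention in \eqref{eq:projSingular} together with the monotonicity of $j\mapsto F_{\mathrm{sing},2j}$, but it is routine as you say.
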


Furthermore, we collect two estimates that readily follow from \eqref{eq:LagrSystem}. 
\begin{prop}\label{prop:simpleEst}
	Given $(\bar{u}, \bar{\mu}) \in \D$ and $t \geq 0$, let $\bar{X} = L \left( (\bar{u}, \bar{\mu}) \right)$ and \newline $\bar{X}_{\Dx} = L \circ P_{\Dx} \left((\bar{u}, \bar{\mu}) \right)$. In addition, define $(y_{\Dx}, U_{\Dx}, V_{\Dx})(t) = S_t (\bar{X}_{\Dx})$ and $(y, U, V)(t) = S_t(\bar{X})$, then  \vspace{-0.1cm}
	\begin{equation}\label{eq:esty}
		\|y(t) - y_{\Dx}(t)\|_{\infty} \leq \|\bar{y} - \bar{y}_{\Dx}\|_{\infty} + \int_0^t\|U(s) - U_{\Dx}(s)\|_{\infty}ds.
		\end{equation}
	Moreover, the following pointwise estimate holds for any $\xi \in \R$, 
	\begin{align}\label{eq:estU}
	|U(t, \xi) &- U_{\Dx}(t, \xi)| \leq  \|\bar{U} - \bar{U}_{\Dx}\|_{\infty} \nonumber
		\\ & \hspace{-0.2cm}+\! \frac{1}{4} \bigg|\int_0^t\!\bigg( \int_{-\infty}^{\xi}\! \!\left(V_{\xi}(s, \eta) - V_{\Dx, \xi}(s, \eta)\right)\!\d\eta -\! \int_{\xi}^{\infty}\! \!\left(V_{\xi}(s, \eta) - V_{\Dx, \xi}(s, \eta)\right)\!d\eta \bigg)ds \bigg|. \raisetag{-20pt}
	\end{align}
\end{prop}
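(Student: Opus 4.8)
The plan is to derive both inequalities directly from the Lagrangian evolution system \eqref{eq:LagrSystem}, exploiting that the two solutions $(y, U, V)(t)$ and $(y_{\Dx}, U_{\Dx}, V_{\Dx})(t)$ both solve the same ODE system, merely with different (Eulerian-projected) initial data and different wave breaking functions $\tau$ and $\tau_{\Dx}$ entering through \eqref{eq:LagrSystem3}. The key structural observation is that once $V$ and $V_{\Dx}$ are known as functions of $(t, \xi)$, the equations \eqref{eq:LagrSystem1}--\eqref{eq:LagrSystem2} for $y$ and $U$ are just an inhomogeneous linear system driven by $V$; so the differences $y - y_{\Dx}$ and $U - U_{\Dx}$ satisfy integral identities obtained by subtracting and integrating in time.

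For \eqref{eq:esty}: integrate \eqref{eq:LagrSystem1} in time to obtain $y(t, \xi) = \bar{y}(\xi) + \int_0^t U(s, \xi)\,ds$, and likewise for $y_{\Dx}$. Subtracting gives, pointwise in $\xi$,
\begin{equation*}
	y(t, \xi) - y_{\Dx}(t, \xi) = \bar{y}(\xi) - \bar{y}_{\Dx}(\xi) + \int_0^t \bigl(U(s, \xi) - U_{\Dx}(s, \xi)\bigr)\,ds,
\end{equation*}
and then taking absolute values, bounding the integrand by $\|U(s) - U_{\Dx}(s)\|_{\infty}$, and taking the supremum over $\xi \in \R$ yields \eqref{eq:esty}. (One should note that $U(\cdot, \xi)$ is continuous in $t$ — the right-hand side of \eqref{eq:LagrSystem2} may be discontinuous at $t = \tau(\xi)$, but $U$ itself stays continuous — so the fundamental theorem of calculus applies in integrated form; this is where one invokes the well-posedness in $\F$ recalled from \cite[Lem. 2.2.1]{PhDNordli}.)

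For \eqref{eq:estU}: integrate \eqref{eq:LagrSystem2} in time, $U(t, \xi) = \bar{U}(\xi) + \int_0^t \bigl(\frac{1}{2}V(s, \xi) - \frac{1}{4}V_{\infty}(s)\bigr)\,ds$, and similarly for $U_{\Dx}$. Here one uses that $V(s, \xi) = \int_{-\infty}^\xi V_\xi(s, \eta)\,d\eta$ and $V_{\infty}(s) = \int_{-\infty}^{\infty} V_\xi(s, \eta)\,d\eta$, so that
\begin{equation*}
	\tfrac{1}{2}V(s, \xi) - \tfrac{1}{4}V_{\infty}(s) = \tfrac{1}{4}\Bigl(\int_{-\infty}^\xi V_\xi(s, \eta)\,d\eta - \int_\xi^{\infty} V_\xi(s, \eta)\,d\eta\Bigr),
\end{equation*}
and the analogous identity for the numerical solution. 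Subtracting the two representations of $U$ and $U_{\Dx}$, the $\bar{U} - \bar{U}_{\Dx}$ term is bounded by $\|\bar{U} - \bar{U}_{\Dx}\|_{\infty}$, and the remaining term is exactly the time integral of $\frac14$ times the difference of the two bracketed expressions, i.e. the second term on the right of \eqref{eq:estU}. Applying the triangle inequality to split off the initial term completes the bound.

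I do not expect a serious obstacle here — both estimates are essentially bookkeeping on \eqref{eq:LagrSystem}. The only point requiring a little care is the justification that one may pass from the differential form \eqref{eq:LagrSystem1}--\eqref{eq:LagrSystem2} to the integrated (Duhamel) form despite the jump discontinuities in the right-hand side at the wave breaking times $\tau(\xi)$ and $\tau_{\Dx}(\xi)$: the integrands $U(s, \xi)$ and $\frac12 V(s, \xi) - \frac14 V_{\infty}(s)$ are bounded and piecewise continuous in $s$ on any finite interval (with at most countably many jumps, or finitely many for the numerical solution), hence Lebesgue integrable, and $y(\cdot, \xi)$, $U(\cdot, \xi)$ are Lipschitz in $t$; this is exactly the regularity guaranteed by membership in $\F$ (Definition~\ref{def:LagSet}) together with global well-posedness. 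Once that is granted, the rest is the triangle inequality and taking suprema, and no rate information about $\bar{V}_{\Dx, \xi}$ is needed at this stage — the point of the proposition is precisely to isolate the dependence of $U_{\Dx}(t)$ and $y_{\Dx}(t)$ on $V_\xi - V_{\Dx, \xi}$ so it can be analyzed later.
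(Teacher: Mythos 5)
Your proof is correct and matches what the paper intends: the paper does not print a proof at all, presenting the proposition as something that ``readily follows'' from \eqref{eq:LagrSystem}, and your argument — integrating \eqref{eq:LagrSystem1}--\eqref{eq:LagrSystem2} in time, using $\tfrac12 V - \tfrac14 V_\infty = \tfrac14\bigl(\int_{-\infty}^\xi V_\xi - \int_\xi^\infty V_\xi\bigr)$, subtracting the two solutions, and applying the triangle inequality — is exactly the bookkeeping the authors have in mind. Your remark about passing to the integrated form despite the jumps in the right-hand side at $\tau(\xi)$ is the right point to flag, and the resolution you give (absolute continuity of $y(\cdot,\xi)$ and $U(\cdot,\xi)$, guaranteed by well-posedness in $\F$) is sound.
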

Due to Lemma~\ref{lem:convInt} and \eqref{eq:esty}, it only remains to estimate the second term in \eqref{eq:estU}. To this end, let us recall the function $g(X)$ from \cite[Def. 4.3]{AlphaHS} which anticipates the future energy loss. This function is based on disjointly partitioning $\R$ into a set of points where no wave breaking will take place in the future, $\Omega_c$, and its complement, $\Omega_d$, containing all the points that eventually will experience wave breaking. More precisely, let
\begin{equation}\label{eq:omegas}
\begin{aligned}
	\Omega_c(X) &= \{\xi \in \R\!: U_{\xi}(\xi) \geq 0 \}, \\
	\Omega_d(X) &= \{ \xi \in \R\!: U_{\xi}(\xi) < 0 \}, 
\end{aligned}
\end{equation}
and define \vspace{-0.1cm}
\begin{align}\label{eq:function_g}
	g(X)(\xi) &:= \begin{cases} 
		(1-\alpha)V_{\xi}(\xi), & \xi \in \Omega_d(X), \\
		V_{\xi}(\xi), & \xi \in \Omega_c(X).
		\end{cases}
\end{align}
The following result has been established in \cite[Thm. 4.9]{AlphaAlgorithm}. 

\begin{lemma}\label{lem:estimateVxiTime}
For $(\bar{u}, \bar{\mu}) \in \D$ and $t\geq 0$, let $\bar{X} = L \left( (\bar{u}, \bar{\mu})\right)$ and \newline $\bar{X}_{\Dx} = L \circ P_{\Delta x} \left((\bar{u}, \bar{\mu})\right)$. Moreover, introduce $X(t) = S_t(\bar{X})$ and $X_{\Delta x}(t) = S_{t}(\bar{X}_{\Dx})$, then
\begin{align}\label{eq:VxiTimeEst}
	\bigg|\int_0^t\!\bigg( \int_{-\infty}^{\xi}\! \!\left(V_{\xi}(s, \eta) - V_{\Dx, \xi}(s, \eta)\right)\!\d\eta &-\! \int_{\xi}^{\infty}\! \!\left(V_{\xi}(s, \eta) - V_{\Dx, \xi}(s, \eta)\right)\!d\eta \bigg)ds \bigg| \nonumber 
	\\ 
	&\leq (1+\alpha)t\|\bar{V}_{\xi} - \bar{V}_{\Delta x, \xi} \|_1 \nonumber \\ 
	& \quad + t\|g(\bar{X}) - g(\bar{X}_{\Delta x}) \|_1 \nonumber  \\ 
	& \quad + 2\alpha \sqrt{2 \Big(1 + \frac{1}{4}t^2 \Big)\bar{F}_{\infty}} \|\bar{U}_{\xi} - \bar{U}_{\Delta x, \xi} \|_2. 
\end{align}
\end{lemma}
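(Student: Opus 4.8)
The plan is to split the double time integral into a "bulk" part coming from the difference $\bar V_\xi - \bar V_{\Delta x,\xi}$ of the initial Lagrangian energy densities and a "dissipation" part coming from the $\alpha$-fraction of energy removed along characteristics that break before time $s$. Recall from \eqref{eq:LagrSystem3} that $V(t,\xi) = \int_{-\infty}^\xi (1 - \alpha\chi_{\{t \ge \tau > 0\}}(\eta))\bar V_\xi(\eta)\,d\eta$, so that $V_\xi(s,\eta) = \bar V_\xi(\eta) - \alpha\chi_{\{s \ge \tau(\eta) > 0\}}(\eta)\bar V_\xi(\eta)$, and similarly $V_{\Delta x,\xi}(s,\eta) = \bar V_{\Delta x,\xi}(\eta) - \alpha\chi_{\{s \ge \tau_{\Delta x}(\eta) > 0\}}(\eta)\bar V_{\Delta x,\xi}(\eta)$. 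Subtracting, the integrand $V_\xi(s,\eta) - V_{\Delta x,\xi}(s,\eta)$ equals $(\bar V_\xi - \bar V_{\Delta x,\xi})(\eta)$ minus $\alpha$ times the difference of the two "removed-energy" terms. For the bulk part, one simply bounds $|\int_{-\infty}^\xi(\cdots) - \int_\xi^\infty(\cdots)| \le \|\bar V_\xi - \bar V_{\Delta x,\xi}\|_1$, and integrating in $s$ over $[0,t]$ gives the first summand $t\|\bar V_\xi - \bar V_{\Delta x,\xi}\|_1$ (the factor $(1+\alpha)$ will absorb a further $\alpha$-contribution described below).

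The dissipation part is the crux. The quantity removed from $\eta$ by time $s$ is $\alpha\chi_{\{s\ge\tau(\eta)>0\}}\bar V_\xi(\eta)$ for the exact solution; one wants to compare this against $g(\bar X)(\eta)$ from \eqref{eq:function_g}. The key observation is that $\alpha\chi_{\{s\ge\tau>0\}}\bar V_\xi = \alpha\chi_{\Omega_d(\bar X)}\bar V_\xi \cdot \chi_{\{\tau\le s\}}$ plus a correction — more precisely, writing $\bar V_\xi = g(\bar X) + \alpha\chi_{\Omega_d(\bar X)}\bar V_\xi$ by definition of $g$, one can re-express the removed energy in terms of $g$. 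The strategy is: on $\Omega_d(\bar X)$, where $\tau(\eta)<\infty$, split the comparison with $\Omega_d(\bar X_{\Delta x})$ into the part where the two breaking-indicator sets $\{\tau\le s\}$ and $\{\tau_{\Delta x}\le s\}$ agree (controlled by $\|g(\bar X)-g(\bar X_{\Delta x})\|_1$ and $\|\bar V_\xi - \bar V_{\Delta x,\xi}\|_1$) and the symmetric-difference part where the breaking times straddle $s$, i.e. $\tau(\eta)\le s < \tau_{\Delta x}(\eta)$ or vice versa. This last set is where one must pay with the $L^2$-norm of $\bar U_\xi - \bar U_{\Delta x,\xi}$: since $\tau = -2\bar y_\xi/\bar U_\xi$, the event that $\tau$ and $\tau_{\Delta x}$ lie on opposite sides of $s$ forces $\bar U_\xi$ and $\bar U_{\Delta x,\xi}$ to lie on opposite sides of $-2\bar y_\xi/s$ (using the relation $\bar y_\xi + \bar V_\xi = 1$, resp. $\bar y_{\Delta x,\xi} + \bar V_{\Delta x,\xi} = 1$, from the Lagrangian constraints), so $|\bar U_\xi - \bar U_{\Delta x,\xi}|$ is bounded below on that set by the distance to the threshold; a Chebyshev/Cauchy--Schwarz argument then converts the measure of the bad set times the (bounded, by $\sqrt{\bar F_\infty}$) integrand into the $L^2$-bound $2\alpha\sqrt{2(1+\tfrac14 t^2)\bar F_\infty}\,\|\bar U_\xi - \bar U_{\Delta x,\xi}\|_2$. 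The factor $(1+\tfrac14 t^2)$ presumably enters because after integrating the straddling condition in $s$ over $[0,t]$ and using $y_\xi(s)\le \bar y_\xi + \tfrac{s^2}{4}\bar V_\xi$-type bounds on how $y_\xi$ evolves (so $\int_0^t |y^{-1}|\text{-type terms}\,ds$ produces the quadratic-in-$t$ factor), one controls the time-accumulated contribution of the symmetric difference.

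The main obstacle I anticipate is precisely this straddling-set estimate: making rigorous the claim that $\{s : \tau(\eta)\le s < \tau_{\Delta x}(\eta)\text{ or reverse}\}$ has $s$-measure controlled by $|\bar U_\xi(\eta) - \bar U_{\Delta x,\xi}(\eta)|$ in a way that, after integrating over $\eta$, yields an $L^2$ norm rather than an $L^1$ norm. One would do this by a change of variables $s \leftrightarrow \tau$-threshold, noting $|\{s\in[0,t]: -2\bar y_\xi/\bar U_\xi \le s < -2\bar y_\xi/\bar U_{\Delta x,\xi}\}| \le$ (something like) $\tfrac{2\bar y_\xi}{|\bar U_\xi||\bar U_{\Delta x,\xi}|}|\bar U_\xi - \bar U_{\Delta x,\xi}|$, and then one must carefully bound the rational prefactor using the uniform lower bound $y_\xi + \bar V_\xi \ge c$ and the energy bounds $\bar V_\xi \le \|\bar V_\xi\|_\infty$, $\bar V_{\xi}(\Omega_d) \le \bar F_\infty$, finally applying Cauchy--Schwarz in $\eta$ against $\|\bar U_\xi - \bar U_{\Delta x,\xi}\|_2$ and tracking where the constants $2$, $\sqrt 2$, and $(1 + \tfrac14 t^2)$ come from. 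The measurability hypotheses flagged in the introduction (no singular continuous part) ensure the sets $\Omega_d$, $\{\tau\le s\}$, etc., are genuinely measurable so these integrals make sense. I would also need to double-check that the two indicator-comparison terms (agreement region) collapse cleanly into $t\|g(\bar X)-g(\bar X_{\Delta x})\|_1$ and an extra $\alpha t\|\bar V_\xi - \bar V_{\Delta x,\xi}\|_1$ that merges with the bulk term to give the stated coefficient $(1+\alpha)t$.
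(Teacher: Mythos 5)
Your high-level decomposition is on the right track: you correctly isolate the "bulk" contribution from $\bar V_\xi - \bar V_{\Delta x,\xi}$, recognize that the energy actually removed should be compared through $g$, and identify that the hard part is the set of labels $\eta$ where the two breaking times $\tau(\eta)$ and $\tau_{\Delta x}(\eta)$ straddle the running time $s$. That is indeed where the $L^2$-norm of $\bar U_\xi - \bar U_{\Delta x,\xi}$ must enter.

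However, the mechanism you propose for the straddling contribution does not close. You want to bound the $s$-measure of $\{s\in[0,t]:\tau(\eta)\le s<\tau_{\Delta x}(\eta)\}$ by something proportional to $|\bar U_\xi(\eta)-\bar U_{\Delta x,\xi}(\eta)|$, via $\tau=-2\bar y_\xi/\bar U_\xi$ and a difference-of-reciprocals estimate. The resulting prefactor is of the form $\bar y_\xi/(|\bar U_\xi|\,|\bar U_{\Delta x,\xi}|)$, and even after using $\bar y_\xi\bar V_\xi = \bar U_\xi^2$ to cancel one power, you are left with a ratio like $|\bar U_\xi|/|\bar U_{\Delta x,\xi}|$ which is \emph{not} uniformly bounded — the condition $\bar y_\xi + \bar V_\xi\ge c$ gives no lower bound on either $|\bar U_\xi|$ or $|\bar U_{\Delta x,\xi}|$ separately. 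Falling back on the trivial bound $\le t$ for the straddling measure destroys the $\Delta x$-dependence entirely, leaving only an $O(t\bar F_\infty)$ term that does not vanish as $\Delta x\to 0$. Also note that $\bar y_\xi\neq\bar y_{\Delta x,\xi}$, so the two thresholds $-2\bar y_\xi/s$ and $-2\bar y_{\Delta x,\xi}/s$ differ, which your sketch glosses over.

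The argument the paper follows (from the proof reproduced in Theorem~\ref{thm:Discrepancy}, based on \cite[Thm.~4.9]{AlphaAlgorithm}) avoids this entirely by exploiting the ODE structure rather than the algebraic formula for $\tau$. On, say, $\bar\Omega_{d,d}\cap\Omega_{d,c}(t)$ one writes
\[
\int_{\tau_{\Delta x}(\eta)}^t \alpha\,\bar V_\xi(\eta)\,ds \;=\; 2\alpha\bigl(U_\xi(t,\eta)-U_\xi(\tau_{\Delta x}(\eta),\eta)\bigr),
\]
using $U_{\xi,t}=\tfrac12 V_\xi=\tfrac12\bar V_\xi$ before breaking. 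Since $\eta\in\Omega_d(X(t))$ forces $U_\xi(t,\eta)\le 0$ while $U_{\Delta x,\xi}(\tau_{\Delta x}(\eta),\eta)=0$, this is bounded by $2\alpha\bigl(U_{\Delta x,\xi}(\tau_{\Delta x},\eta)-U_\xi(\tau_{\Delta x},\eta)\bigr)$, which by the explicit linear evolution \eqref{eq:explicitExp} reduces to $|\bar U_\xi-\bar U_{\Delta x,\xi}|$ plus a time-integrated $|\bar V_\xi-\bar V_{\Delta x,\xi}|$ term. No ratio of small quantities ever appears. Finally, the factor $\sqrt{2(1+\tfrac14 t^2)\bar F_\infty}$ does not come from a growth bound $y_\xi(s)\lesssim \bar y_\xi + \tfrac{s^2}{4}\bar V_\xi$ as you guess; it arises when one applies Cauchy--Schwarz over $\eta$, using the measure bound $|\{\xi: 0<\tau(\xi)\le t\}|\le(1+\tfrac14 t^2)\bar F_\infty$ from \cite[Cor.~2.4]{AlphaHS} (and the same for $\tau_{\Delta x}$, hence the $\sqrt 2$) on the union of sets where breaking actually occurs by time $t$. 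Replacing your Chebyshev step with this ODE-based identity is the key missing idea.
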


Combining Lemma~\ref{lem:convInt}, Proposition~\ref{prop:simpleEst}, and Lemma~\ref{lem:estimateVxiTime} reveals that the task of obtaining a convergence rate for $(y_{\Dx}\!-\!\id, U_{\Dx})(t)$ in $[L^{\infty}(\R)]^2$ is reduced to understanding how the mapping $L$ transports the convergence rate in Lemma~\ref{lem:Rateux} for $\bar{u}_{\Dx, x}$ to the variables $(\bar{U}_{\Dx, \xi}, \bar{V}_{\Dx, \xi}, g(\bar{X}_{\Dx}))$. Unfortunately, there is no natural way to obtain a convergence rate for these variables on the set where either the exact or the numerical solution experience wave breaking initially. In particular, \eqref{eq:waveBreaking} combined with Definition~\ref{def:LagSet} imply that the exact solution breaks initially in Lagrangian coordinates at the points,
\begin{equation}\label{eq:S}
    \mathcal{S} := \{\xi \in \R: \bar{y}_{\xi}(\xi) = 0 \} = \{\xi \in \R: \bar{V}_{\xi}(\xi) = 1 \}. 
\end{equation}
Analogously, let $S_{\Dx}$ denote the corresponding set of points where wave breaking takes places initially for the numerical solution, 
\begin{equation}\label{eq:num_S}
    \mathcal{S}_{\Dx} \!:= \{\xi \in \R: \bar{y}_{\Dx, \xi}(\xi)\! = 0 \} = \{\xi \in \R: \bar{V}_{\Dx, \xi}(\xi) =1 \}.
\end{equation}
The reason we fail to obtain a convergence rate over $\mathcal{A} := \mathcal{S} \cup \mathcal{S}_{\Dx}$ can be explained as follows. Introduce $B=\bar{y}(\mathcal{S})$. From the proof of \cite[Thm. 27]{AlphaCH}, we have 
 \begin{equation}\label{eq:absmu}
    \bar{\mu}_{\mathrm{ac}}=\bar{\mu}\vert_{B^c} \quad \text{ and } \quad \bar{\mu}_{\mathrm{sing}}=\bar{\mu}\vert_B,
\end{equation}
where $\bar{\mu}\vert_B$ denotes the restriction of $\bar{\mu}$ to $B$, i.e., for any Borel set $A$,  $\bar{\mu}\vert_B(A)=\bar{\mu}(A\cap B)$. 
Similarly, set $B_{\Dx} = y_{\Dx}(\mathcal{S}_{\Dx})$, then 
\begin{equation}\label{eq:absmudx}
	 \bar{\mu}_{\Dx, \mathrm{ac}}=\bar{\mu}_{\Dx}\vert_{B_{\Dx}^c} \quad \text{ and } \quad \bar{\mu}_{\Dx, \mathrm{sing}}=\bar{\mu}_{\Dx}\vert_{B_{\Dx}}.
\end{equation}
The proof of \cite[Thm. 3.8]{LagrangianViewCH} reveals that \eqref{eq:absmu} and \eqref{eq:absmudx} together with Definition~\ref{def:EulerSet}~\ref{def:EulerCond3} and \eqref{eq:MapL} yield 
\begin{equation}\label{eq:rep_y}
	\bar{y}_{\xi}(\xi) = \frac{1}{1 + \bar{u}_{x}^2(\bar{y}(\xi))} \hspace{0.25 cm} \text{and } \hspace{0.25 cm} \bar{y}_{\Dx, \xi}(\xi) = \frac{1}{1 + \bar{u}_{\Dx, x}^2(\bar{y}_{\Dx}(\xi))}  \hspace{0.35cm} \text{for a.e. } \xi \in \mathcal{S}^c \cap \mathcal{S}_{\Dx}^c.
\end{equation}
These identities enable us to transport the rate of convergence in Lemma~\ref{lem:Rateux} to convergence rates for $(\bar{U}_{\Dx, \xi}, \bar{V}_{\Dx, \xi}, g(\bar{X}_{\Dx}))$ on $\mathcal{S}^c \cap \mathcal{S}_{\Dx}^c$. However, the identities in \eqref{eq:rep_y} fail to hold (either one or both of them) over $\mathcal{A}$, and one is therefore unable to obtain a convergence rate over this set. 

\begin{remark}\label{rem:Sdx}
	Actually, due to the very definition of $P_{\Dx}$, $\mathcal{S}_{\Dx}$ is a union of closed intervals. To see this, recall that $\bar{\mu} = \bar{\mu}_{\mathrm{ac}} + \bar{\mu}_{\mathrm{sing}}$ by \eqref{eq:splitF}, and that $\bar{\mu}_{\mathrm{sing}}$ can be decomposed further into two mutually singular measures, see e.g.,  \cite[Sec. 3]{RealAnalysisFolland} or \cite[Thm. 9.7]{McDonaldAnalysis}, 
\begin{equation*}
  	\bar{\mu}_{\mathrm{sing}} = \bar{\mu}_{\mathrm{sc}} + \bar{\mu}_{\mathrm{d}}, 
\end{equation*}
where $\bar{\mu}_{\mathrm{sc}}$ is singular continuous and $\bar{\mu}_{\mathrm{d}}$ is purely discrete. It then follows from Definition~\ref{def:ProjOP} and \eqref{eq:lagr_grid} that  
\begin{equation}\label{eq:explicitS_dx}
    \mathcal{S}_{\Delta x} = \bigcup_{j \in \mathbb{Z}}[\xi_{3j}, \xi_{3j+1}]= \bigcup_{j \in \mathbb{Z}}[x_{2j} + \bar{F}(x_{2j}), x_{2j} + \bar{F}(x_{2j}) + b_{2j}], 
\end{equation}
where \vspace{-0.125cm}
\begin{equation}\label{eq:singContrib}
	b_{2j} = \sum_{k \in I_{2j}}\bar{\mu}_{\mathrm{d}}(\{z_k \}) + \bar{\mu}_{\mathrm{sc}}([x_{2j}, x_{2j+2})).
\end{equation}
Here $I_{2j}\! \subseteq \! \mathcal{J}$ is an index set labeling the points of $K= \mathrm{supp}(\bar{\mu}_{\mathrm{d}}) = \{z_k \}_{k \in \mathcal{J}}$ contained within $[x_{2j}, x_{2j+2})$, where $\mathcal{J} \subseteq \mathbb{N}$ might be empty.  
\end{remark}

\subsection{The case $\mathbf{\boldsymbol{\bar{\mu}}_{\mathrm{sing}} = 0}$}\label{sec:noSingular} 
Thanks to \eqref{eq:absmu} and \eqref{eq:absmudx}, we have\footnote{The notation $|A|$ is used to denote the Lebesgue measure of the set $A \subseteq \R$} 
\begin{equation*}
	|\mathcal{S}| = \mu_{\mathrm{sing}}(\R) \hspace{0.35cm} \text{and} \hspace{0.35cm} |\mathcal{S}_{\Dx}| = \mu_{\Dx, \mathrm{sing}}(\R).
\end{equation*}
Moreover, since Definition~\ref{def:ProjOP}, or more precisely \eqref{eq:projSingular}, implies that $\bar{\mu}_{\Dx}$ is purely absolutely continuous whenever $\bar{\mu}$ is, it therefore follows that $|\mathcal{S} \cup \mathcal{S}_{\Dx}| = 0$ in this case. Nevertheless, wave breaking might still happen initially for the exact solution at isolated points $x \in \R$ satisfying $\bar{u}_{x}(x) = -\infty$, see e.g., Example~\ref{ex:Cusp}. However, as $\bar{u}_{x} \in L^2(\R)$ and each such point is mapped by $L$ from \eqref{eq:MapL} into a single point in Lagrangian coordinates, this set (on which $y_{\xi}$ and $V_{\xi}$ are undefined) has zero measure.  

In conclusion, the estimate from Lemma~\ref{lem:estimateVxiTime} can be used to derive a convergence rate for $\|U(t) - U_{\Dx}(t)\|_{\infty}$ whenever $\bar{\mu}$ is purely absolutely continuous, because $\mathcal{S}^c \cap \mathcal{S}_{\Dx}^c$ is then of full measure, which means that the identities in \eqref{eq:rep_y} can be used to relate the convergence rate from Lemma~\ref{lem:Rateux} to convergence rates for $(\bar{U}_{\Dx, \xi}, \bar{V}_{\Dx, \xi}, g(\bar{X}_{\Dx}))$ in $L^2(\R) \times [L^1(\R)]^2$.

\subsection{A mapping for treating initial energy concentrations}
Next, we extend our analysis to the case where $\bar{\mu}$ may have a singular part, $\bar{\mu}_{\mathrm{sing}}$ which is purely discrete. In this case, $\mathcal{S} \cup \mathcal{S}_{\Dx}$ may have positive measure, and our goal will be to derive a refinement of Lemma~\ref{lem:estimateVxiTime}. 

The idea is to define a bijection $f\!:\R \rightarrow \R$ which satisfies $f(\mathcal{S})\! = \mathcal{S}_{\Dx}\setminus E$ for a set $E$ with $|E|=0$, i.e., it should map the points where the exact solution breaks initially over to the set where the numerical solution breaks initially modulo some null set. Combining this function together with the fact that $\bar{\mu}_{\Dx, \mathrm{sing}}\big ([x_{2j}, x_{2j+2})\big) = \bar{\mu}_{\mathrm{sing}}\big([x_{2j}, x_{2j+2})\big)$ for any $j \in \mathbb{Z}$, i.e., $P_{\Dx}$ preserves the local mass of $\bar{\mu}_{\mathrm{sing}}$, enables us to prove that the set $\mathcal{A}$ contributes with at most order $\mathcal{O}(\Dx^{\beta})$ to the second term in \eqref{eq:estU}, whenever $\bar{u}_x \in B_2^{\beta}$. For the remaining error, which originates from the complement $\mathcal{A}^c$, we derive an estimate similar to the one in Lemma~\ref{lem:estimateVxiTime}. 

The key quantities involved in our argument, as well as their corresponding convergence rates are visualized in Figure~\ref{fig:schematicRep}. 

\begin{figure}
	\includegraphics{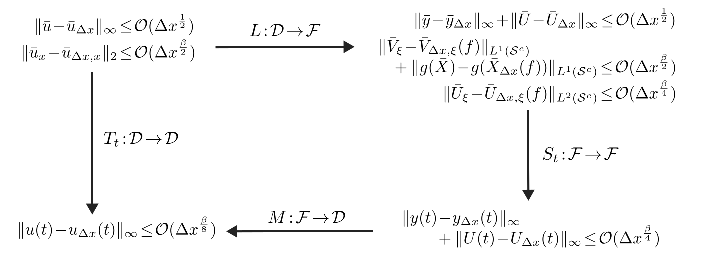}
	\captionsetup{width=.975\linewidth}
	\vspace{-2ex}
	\caption{A schematic representation of all the quantities involved when deriving a convergence rate for $\{u_{\Dx}(t)\}_{\Dx > 0}$, and their respective rates of convergence. Here $f$ (to be defined) denotes a bijection which maps $\mathcal{S}$ to $\mathcal{S}_{\Dx}$.}
	\label{fig:schematicRep}
\end{figure}

Let us start by observing that for any $j \in \mathbb{Z}$ we have, due to \eqref{eq:MapM}, Definition~\ref{def:ProjOP}, and \eqref{eq:absmu}--\eqref{eq:absmudx}, 
\begin{align*}
 \int_{[\xi_{3j}, \xi_{3j+3}) \cap \mathcal{S}_{\Dx}} \bar{V}_{\Dx, \xi}(\xi)d\xi  &=  \bar{\mu}_{\Dx, \mathrm{sing}}([x_{2j}, x_{2j+2}))  \nonumber
     \\ & = \bar{F}_{\Dx, \mathrm{sing}}(x_{2j+2}) - \bar{F}_{\Dx, \mathrm{sing}}(x_{2j}) \nonumber
     \\ &= \bar{F}_{\mathrm{sing}, 2j+2} - \bar{F}_{\mathrm{sing}, 2j} \nonumber 
     \\ &= \bar{\mu}_{\mathrm{sing}}([x_{2j}, x_{2j+2})) = \int_{[\xi_{3j}, \xi_{3j+3}) \cap \mathcal{S}}\bar{V}_{\xi}(\xi)d\xi, 
\end{align*} 
which implies, cf. \eqref{eq:S} and \eqref{eq:num_S}, \vspace{-0.1cm}
\begin{equation}\label{eq:measureS}
	\big|\mathcal{S}_{\Dx, 3j}\big| = \big|\mathcal{S}_{3j} \big|, 
\end{equation}
where we introduced 
\begin{equation}\label{eq:S3j}
	\mathcal{S}_{\Dx, 3j} = \mathcal{S}_{\Dx} \cap [\xi_{3j}, \xi_{3j+3}) \hspace{0.35cm} \text{and} \hspace{0.35cm} \mathcal{S}_{3j} :=  \mathcal{S} \cap [\xi_{3j}, \xi_{3j+3}).
\end{equation}
In other words, $P_{\Dx}$ is constructed in such a way that it preserves the length of $\mathcal{S}_{3j}$. Furthermore, as discussed in \cite[Sec. 5]{EquivalenceEulerLagrange}, any point $x$ where $\bar{F}$ is discontinuous corresponds to a maximal interval in Lagrangian coordinates $[\xi_{x, \mathrm{l}}, \xi_{x, \mathrm{r}}]$ such that
\begin{equation*}
	\bar{y}(\xi) = x \hspace{0.2cm} \text{ for } \xi \in  [\xi_{x, \mathrm{l}}, \xi_{x, \mathrm{r}}] \hspace{0.4 cm} \text{where} \hspace{0.4cm} \xi_{x, \mathrm{r}} - \xi_{x, \mathrm{l}} = \bar{\mu}(\{x\}) = \bar{F}(x+) - \bar{F}(x). 
\end{equation*}
Therefore, recalling \eqref{eq:MapL} and $K = \{z_k \}_{k\in \mathcal{J}}$ from Remark~\ref{rem:Sdx}, which coincides with the set of discontinuities for $\bar{F}$, it follows  that  \eqref{eq:S} can be recast on the form \vspace{-0.075cm}
\begin{equation} \label{eq:simplifiedS}
	\mathcal{S} = \bigcup_{z \in K} \!\left[\xi_{z, \mathrm{l}}, \xi_{z, \mathrm{r}}\right] \! =\bigcup_{k \in \mathcal{J}} \! \left[z_k + \bar{F}(z_k), z_k +\bar{F}(z_k+) \right]\!.
\end{equation}

Keeping \eqref{eq:measureS} in mind and comparing \eqref{eq:explicitS_dx} to \eqref{eq:simplifiedS}, since \newline $b_{2j} = \sum_{k \in I_{2j}} \bar{\mu}_{\mathrm{d}}(\{z_k\}) = \sum_{k \in I_{2j}}(\bar{F}(z_k+) - \bar{F}(z_k))$ by assumption, suggests that is should be possible to find a function $f\!:\R \rightarrow \R$ which maps the points where wave breaking occurs for $\bar{X}$ over to the set of points where $\bar{X}_{\Dx}$ breaks. That is, the function should satisfy 
\begin{equation}\label{eq:mapS}
	\bar{y}_{\Dx, \xi}(f(\xi)) = 0 \hspace{0.45cm} \text{for a.e. } \xi \in \mathcal{S}. 
\end{equation}
Thanks to \eqref{eq:MapL}, this will imply 
\begin{equation}\label{eq:vanishing}
	\bar{V}_{\xi}(\xi) - \bar{V}_{\Dx, \xi}(f(\xi)) = 0 \hspace{0.45cm} \text{for a.e. } \xi \in \mathcal{S}. 
\end{equation}
Let us now proceed for any $j \in \mathbb{Z}$ by constructing a local function $f_{3j}$ satisfying \eqref{eq:mapS} on $\mathcal{S}_{3j}$, and by thereafter gluing together these local functions, we obtain the desired, globally-defined function $f$ satisfying \eqref{eq:mapS}. 

To this end, recall $I_{2j}\! \! \subseteq \! \!\mathbb{N}$ which labels the indices such that $\{z_k\}_{k \in I_{2j}} = [x_{2j}, x_{2j+2}) \cap K$, and let $N_{2j}$ denote the cardinality of this index set.  We then distinguish between two cases: 

\textbf{If $\mathbf{N_{2j} = 0}$}: then $\bar{F}$ does not admit any discontinuities inside $[x_{2j}, x_{2j+2})$, hence $|\mathcal{S}_{3j}|=0 = |\mathcal{S}_{\Dx, 3j}|$ by \eqref{eq:measureS}, which in turn implies that both $\bar{y}$ and $\bar{y}_{\Dx}$ are strictly increasing a.e. on $[\xi_{3j}, \xi_{3j+3})$. Thus, let
\begin{equation}\label{eq:f_no}
	f_{3j}(\xi) =  \xi, \hspace{0.5cm} \xi \in[\xi_{3j}, \xi_{3j+3}). 
\end{equation}

\textbf{If} $\mathbf{N_{2j} \geq 1}$: let $\{k_m \}_{m=1}^{N_{2j}}$ be an enumeration of $I_{2j}$  and introduce 
\begin{equation}\label{eq:discPointLagr}
    \xi_{k_m, \mathrm{l}} = z_{k_m} + \bar{F}(z_{k_m}) \hspace{0.5cm} \text{and} \hspace{0.5cm} \xi_{k_m, r} = z_{k_m} + \bar{F}(z_{k_m}+), \quad m \in \{1, ... N_{2j}\},
\end{equation}
in which case \vspace{-0.05cm}
\begin{equation}\label{eq:repS3j}
	S_{3j} = \bigcup_{m=1}^{N_{2j}}[\xi_{k_m, \mathrm{l}}, \xi_{k_m, \mathrm{r}}].
\end{equation}
Here $\{\xi_{k_{N_{2j}}, \mathrm{l}}, \xi_{k_{N_{2j}}, \mathrm{r}}\}$ labels the endpoints of the rightmost interval, even if there is a countable infinite number of such intervals. Furthermore, to simplify notation, set $\xi_{k_0, \mathrm{r}} = \xi_{3j}$ and $z_{k_0} =x_{2j}$. We can then define $f_{3j}\!: [\xi_{3j}, \xi_{3j+3}) \rightarrow [\xi_{3j}, \xi_{3j+3}) $ as follows 
\begin{align}\label{eq:locInv}
	f_{3j}(\xi) &=\! \begin{cases}
	\xi + \sum_{n=1}^{N_{2j}} \left(\xi_{k_n, \mathrm{r}} - \xi_{k_n, \mathrm{l}} \right)\!, & \xi  \! \in \! [\xi_{3j}, \xi_{k_1, \mathrm{l}}), \\ 
	\! \!\xi + \sum_{n=m}^{N_{2j}} \left(\xi_{k_n, \mathrm{r}} - \xi_{k_n, \mathrm{l}} \right)\!, & \xi  \! \in \! (\xi_{k_{m-1}, \mathrm{r}}, \xi_{k_m, \mathrm{l}}), \hspace{0.25cm} m\geq 2,  \\
	\xi - \sum_{n=1}^m \left( \xi_{k_n, \mathrm{l}} - \xi_{k_{n-1}, \mathrm{r}} \right)\!, & \xi \! \in \![\xi_{k_m, \mathrm{l}}, \xi_{k_m, \mathrm{r}}], \hspace{0.65cm} m \geq 1, \\
	\xi, & \xi \! \in\! (\xi_{k_{N_{2j}}, \mathrm{r}}, \xi_{3j+3}), 
	\end{cases}
\end{align}
for $m \in \{1, ..., N_{2j}\}$. 

After gluing together these locally defined functions,  one obtains the global function $f\!:\R \rightarrow \R$ given by 
\begin{align}\label{eq:map_f}
	f(\xi) &= \begin{cases} 
		\vdots \\
		f_{3j-3}(\xi), & \xi \in [\xi_{3j-3}, \xi_{3j}), \\
		f_{3j}(\xi), & \xi \in [\xi_{3j}, \xi_{3j+3}). \\ 
		\vdots \end{cases}
\end{align}

Figure~\ref{fig:Illustrationf} illustrates the behavior of $f_{3j}$ when applied to $\bar{y}_{\Dx}$ and $\bar{V}_{\Dx, \xi}$; the points in $[\xi_{3j}, \xi_{3j+3})$ are rearranged in such a way that $\bar{y}_{\Dx}(f_{3j})$ is piecewise constant on $S_{3j}$, where $\bar{y}$ is constant. As a consequence, it holds that  $\bar{V}_{\xi}(\xi) - \bar{V}_{\Dx, \xi}(f_{3j}(\xi)) = 0$ for a.e. $\xi \in \mathcal{S}_{3j}$, cf. \eqref{eq:vanishing}, but this comes at the cost of losing the continuity of $\bar{y}_{\Dx}$ and $\bar{V}_{\Dx}$. In spite of that, the mapping $f$,  when applied to $\bar{X}_{\Dx, \xi} = (\bar{y}_{\Dx, \xi}, \bar{U}_{\Dx, \xi}, \bar{V}_{\Dx, \xi})$, still preserves much of the underlying structure of $\F$ from Definition~\ref{def:LagSet}. 

\begin{figure}
	\includegraphics{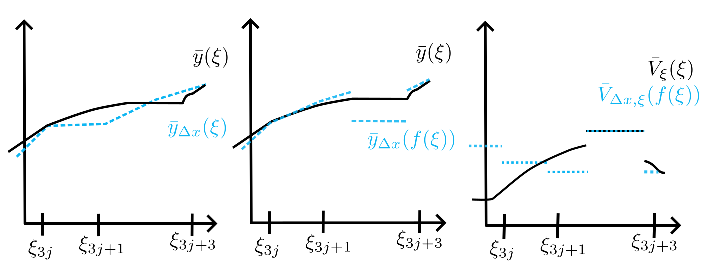}
	\captionsetup{width=.925\linewidth}
	\vspace{-0.3cm}
	\caption{The mapping $f$ rearranges the points within $[\xi_{3j}, \xi_{3j+3})$ in such a way that $\bar{y}_{\Dx}(f)$ is piecewise constant on $\mathcal{S}_{3j}$. As a consequence, $\bar{V}_{\xi}(\xi)\! -\! \bar{V}_{\Dx, \xi}(f(\xi))\!=\!0$ for a.e. $\xi \in\! \mathcal{S}_{3j}$.}
	\label{fig:Illustrationf}
\end{figure}

\begin{lemma}\label{lem:groupAction}
	Given $(\bar{u}, \bar{F}) \! \in \D$, let $(\bar{u}_{\Dx}, \bar{F}_{\Dx}) = P_{\Dx}((\bar{u}, \bar{F}))$ and \newline $\bar{X}_{\Dx} = L((\bar{u}_{\Dx}, \bar{F}_{\Dx}))$. Moreover, denote by $f : \!\R \rightarrow \! \R$ the mapping defined in \eqref{eq:map_f} and let $(\widehat{y}_{\Dx, \xi}, \widehat{U}_{\Dx, \xi}, \widehat{V}_{\Dx, \xi}) = (\bar{y}_{\Dx, \xi}\! \circ f, \bar{U}_{\Dx, \xi}\! \circ f, \bar{V}_{\Dx, \xi} \!\circ f)$. Then the following holds.
	\begin{enumerate}[label=(\roman*)]
		\item $(\widehat{y}_{\Dx, \xi}-1, \widehat{U}_{\Dx, \xi}, \widehat{V}_{\Dx, \xi}) \in [L^{\infty}(\R) \cap L^2(\R)]^3$,
		\item $\widehat{y}_{\Dx, \xi} \!\geq \!0$, $\widehat{V}_{\Dx, \xi} \!\geq \!0$ a.e. and there exists $c\!>\! 0$ such that $\widehat{y}_{\Dx, \xi} + \widehat{V}_{\Dx, \xi} \!\!\geq \! c$ a.e.,
		\item $\widehat{y}_{\Dx, \xi} \widehat{V}_{\Dx, \xi} = \widehat{U}_{\Dx, \xi}^2 $ a.e.  
		\item for a.e. $\xi \in \mathcal{S}$,
		\begin{equation}\label{eq:iden_yxif}
			\widehat{y}_{\Dx, \xi}(\xi) = \frac{1}{1+\bar{u}_{\Dx, x}^2(\bar{y}_{\Dx}(f(\xi)))}. 
		\end{equation}
		\end{enumerate}
\end{lemma}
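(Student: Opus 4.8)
The plan is to verify the four claims by transporting, via the bijection $f$, the known structural properties of $\bar{X}_{\Delta x} \in \F$ (Definition~\ref{def:LagSet}) together with the identities \eqref{eq:rep_y} and the defining property \eqref{eq:mapS}--\eqref{eq:vanishing} of $f$. The central observation I would make first is that $f$, though discontinuous, is a measure-preserving bijection of $\R$ — indeed, on each cell $[\xi_{3j},\xi_{3j+3})$ it is built from \eqref{eq:locInv} by translating the "wave-breaking" subintervals $[\xi_{k_m,\mathrm l},\xi_{k_m,\mathrm r}]$ past the "non-breaking" gaps, so it is a piecewise translation and hence preserves Lebesgue measure and maps null sets to null sets. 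From this, composition with $f$ preserves $L^\infty$- and $L^2$-norms, which immediately gives claim (i): since $(\bar{y}_{\Delta x}-\id,\bar{U}_{\Delta x},\bar{V}_{\Delta x}) \in [W^{1,\infty}(\R)]^3 \cap B$ (because $\bar{X}_{\Delta x}\in\F$) and its derivatives are piecewise constant with the $\bar{y}_{\Delta x,\xi}-1,\bar{V}_{\Delta x,\xi}\in L^2$ (they are compactly supported in the sense that they vanish outside the support of the energy), the composed triple $(\widehat{y}_{\Delta x,\xi}-1,\widehat{U}_{\Delta x,\xi},\widehat{V}_{\Delta x,\xi})$ lies in $[L^\infty(\R)\cap L^2(\R)]^3$.

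For claims (ii) and (iii) I would argue pointwise: since $f$ is a bijection, for a.e.\ $\xi$ we have $\widehat{y}_{\Delta x,\xi}(\xi) = \bar{y}_{\Delta x,\xi}(f(\xi))$ etc., so the inequalities $\bar{y}_{\Delta x,\xi}\ge 0$, $\bar{V}_{\Delta x,\xi}\ge 0$ a.e.\ and the relation $\bar{y}_{\Delta x,\xi}\bar{V}_{\Delta x,\xi} = \bar{U}_{\Delta x,\xi}^2$ a.e.\ (Definition~\ref{def:LagSet}~\ref{def:LagSet3}) transport verbatim because $f$ maps full-measure sets to full-measure sets. The uniform lower bound $\widehat{y}_{\Delta x,\xi} + \widehat{V}_{\Delta x,\xi}\ge c$ a.e.\ follows the same way from Definition~\ref{def:LagSet}(ii) applied to $\bar{X}_{\Delta x}$ — one must only note that the exceptional null set for $\bar{X}_{\Delta x}$ pulls back under $f^{-1}$ to a null set, which is exactly the measure-preservation property.

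Claim (iv) is the one requiring genuine work. The identity \eqref{eq:iden_yxif} is simply the second identity in \eqref{eq:rep_y} evaluated at $f(\xi)$ — but \eqref{eq:rep_y} only holds a.e.\ on $\mathcal{S}^c\cap\mathcal{S}_{\Delta x}^c$, whereas here $\xi\in\mathcal{S}$, so I cannot invoke it directly. Instead, the point of the construction of $f$ is precisely \eqref{eq:mapS}: $\bar{y}_{\Delta x,\xi}(f(\xi)) = 0$ for a.e.\ $\xi\in\mathcal{S}$, i.e.\ $f(\xi)\in\mathcal{S}_{\Delta x}$ a.e.\ on $\mathcal{S}$; and on $\mathcal{S}_{\Delta x}$ one has $\bar{V}_{\Delta x,\xi} = 1$ by \eqref{eq:num_S}, so $\bar{u}_{\Delta x,x}(\bar{y}_{\Delta x}(f(\xi)))$ must be "$=-\infty$" and the right-hand side of \eqref{eq:iden_yxif} is to be read as $0$, matching $\widehat{y}_{\Delta x,\xi}(\xi)=\bar{y}_{\Delta x,\xi}(f(\xi))=0$. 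So the substantive task is to establish $f(\xi)\in\mathcal{S}_{\Delta x}$ for a.e.\ $\xi\in\mathcal{S}$, which I would prove cell by cell using \eqref{eq:repS3j}, \eqref{eq:locInv}, and the measure identity \eqref{eq:measureS}: within cell $3j$, $f_{3j}$ sends the union $\bigcup_m[\xi_{k_m,\mathrm l},\xi_{k_m,\mathrm r}]$ (which is $\mathcal{S}_{3j}$) by a translation onto an initial block $[\xi_{3j},\xi_{3j}+|\mathcal{S}_{3j}|)$, and by \eqref{eq:measureS} together with \eqref{eq:explicitS_dx} this block is exactly $[\xi_{3j},\xi_{3j+1})=\mathcal{S}_{\Delta x,3j}$ up to endpoints; hence $f(\mathcal{S}_{3j})\subseteq\mathcal{S}_{\Delta x,3j}\cup\{\text{finitely/countably many endpoints}\}$, and the exceptional endpoints form the null set $E$ referenced in the text. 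The main obstacle is the bookkeeping in this last step — matching the enumeration of discontinuity points $\{z_{k_m}\}$ of $\bar F$ to the structure of $\mathcal{S}_{\Delta x}$ in \eqref{eq:explicitS_dx} while keeping track of which countable set of endpoints must be discarded when there are infinitely many jumps accumulating in a single cell — but no deep estimate is involved, only careful interval arithmetic.
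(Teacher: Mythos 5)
Your treatment of parts (i)--(iii) is correct and in fact more streamlined than the paper's. The key observation you make -- that $f$ restricted to each cell $[\xi_{3j},\xi_{3j+3})$ is a piecewise translation, hence a measure-preserving bijection that sends null sets to null sets -- immediately transports $L^\infty$-membership, $L^2$-membership, and all the a.e.\ pointwise relations of Definition~\ref{def:LagSet} from $\bar{X}_{\Dx,\xi}$ to $\bar{X}_{\Dx,\xi}\circ f$. The paper instead verifies \eqref{eq:energyPreserved} by an explicit cell-by-cell change of variables and then derives the $L^2$-bound from $0\le\widehat{V}_{\Dx,\xi}\le 1$, which is more computation but has a side benefit: the identity \eqref{eq:energyPreserved} is needed again in \eqref{eq:change1}--\eqref{eq:change2}. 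Both routes are valid; yours is the conceptually cleaner argument, and your measure-preservation remark also yields \eqref{eq:energyPreserved} as a special case, so nothing is lost. (Your parenthetical justification that the derivatives are in $L^2$ because they ``vanish outside the support of the energy'' is not quite the right reason -- the $L^2$-membership comes directly from $\bar{X}_{\Dx}\in\F$ via the definition of $B$ -- but this is a minor slip.)

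Your argument for part (iv), however, does not go through, and the difficulty traces back to a typo in the statement itself. You correctly deduce that for a.e.\ $\xi\in\mathcal{S}$ one has $f(\xi)\in\mathcal{S}_{\Dx}$, hence $\widehat{y}_{\Dx,\xi}(\xi)=\bar{y}_{\Dx,\xi}(f(\xi))=0$. But you then try to match this with the right-hand side of \eqref{eq:iden_yxif} by reading $\bar{u}_{\Dx,x}(\bar{y}_{\Dx}(f(\xi)))$ as $-\infty$. That cannot happen: $\bar{u}_{\Dx}=P_{\Dx}\bar{u}$ is piecewise linear by Definition~\ref{def:ProjOP}, so $\bar{u}_{\Dx,x}$ is a bounded piecewise-constant function (for fixed $\Dx$, $|Du_{2j}|^2+q_{2j}^2\le DF_{\mathrm{ac},2j}\le F_{\mathrm{ac},\infty}/(2\Dx)$), and the right-hand side of \eqref{eq:iden_yxif} is therefore strictly positive everywhere. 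So as written, \eqref{eq:iden_yxif} is false on $\mathcal{S}$. The domain should read ``for a.e.\ $\xi\in\mathcal{S}^c$'', which is both what the paper's proof delivers and what is actually invoked in the opening display of the proof of Lemma~\ref{lem:rateHxi}. With that correction, the argument is exactly the one-liner the paper gives: since $f$ is a bijection with $f^{-1}(\mathcal{S}_{\Dx})=\mathcal{S}$ modulo a null set, for a.e.\ $\xi\in\mathcal{S}^c$ one has $f(\xi)\in\mathcal{S}_{\Dx}^c$, and the second identity of \eqref{eq:rep_y} evaluated at $f(\xi)$ gives \eqref{eq:iden_yxif} directly. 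Your instinct that \eqref{eq:rep_y} cannot be invoked on $\mathcal{S}$ was exactly right -- the fix is to notice this signals an error in the domain specified in (iv), not to stretch the meaning of the right-hand side.
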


\begin{proof}
It turns out to be convenient to establish \emph{ii)} and \emph{iii)} before showing \emph{i)}. To this end, fix $j \in \mathbb{Z}$ and consider the interval $[\xi_{3j}, \xi_{3j+3})$.

 If $N_{2j} = 0$, then \emph{ii)} and \emph{iii)} hold over $[\xi_{3j}, \xi_{3j+3})\setminus \{\xi_{3j}, \xi_{3j+1}, \xi_{3j+2}\}$, because $f_{3j} = \id$ by \eqref{eq:f_no} and each component of $\bar{X}_{\Dx, \xi}$ is piecewise constant with possible jump discontinuities at $\{\xi_j\}_{j \in \mathbb{Z}}$.

If $N_{2j} \geq 1$ on the other hand, then $f_{3j}$ is given by \eqref{eq:locInv} and admits jump discontinuities at the points 
\begin{equation*}
	A_{3j} := \{\xi_{k_m \mathrm{l}}\}_{m=1}^{N_{2j}} \cup \{\xi_{k_m, \mathrm{r}}\}_{m=1}^{N_{2j}}. 
\end{equation*}
Thus, if we introduce
\begin{equation*}
	C_{3j} :=  \{f^{-1}(\xi_{3j}), f^{-1}(\xi_{3j+1}), f^{-1}(\xi_{3j+2})\},
\end{equation*}
which contains three points as $f$ is a bijection, then $\bar{X}_{\Dx, \xi}(f)$ satisfies \emph{ii)} and \emph{iii)} on $[\xi_{3j}, \xi_{3j+3})\setminus (A_{3j} \cup C_{3j})$, i.e., almost everywhere as $A_{3j} \cup C_{3j}$ is countable.

Since $f$ is a combination of the locally defined functions $\{f_{3j}\}_{j \in \mathbb{Z}}$ glued together at the Lagrangian gridpoints $\{\xi_{3j}\}_{j \in \mathbb{Z}}$, it follows that each component of \newline $\widehat{X}_{\Dx, \xi}=\bar{X}_{\Dx, \xi} \circ f$ is piecewise constant in the sense that it might admit jump discontinuities at the Lebesgue null set  
\begin{align*}
		\mathcal{C}:= \{\xi_{3j}\}_{j \in \mathbb{Z}} \cup \bigcup_{j \in \mathbb{Z}} \bigl(A_{3j } \cup C_{3j}\bigr), 
\end{align*}
and it is piecewise constant outside this set. Here we write $A_{3j} = \emptyset$ whenever $N_{2j}=0$. Thus, due to \eqref{eq:MapL} and the fact that $\bar{X}_{\Dx}\! \!\in \F$, we have 
\begin{equation}\label{eq:nonnegativity}
		\widehat{y}_{\Dx, \xi}(\xi), \widehat{V}_{\Dx, \xi}(\xi) \geq 0 \hspace{0.8cm} \xi \in \mathcal{C}^c, 
\end{equation}
and, furthermore
\begin{subequations}\label{eq:transformId}
\begin{align}
	0 < (\widehat{y}_{\Dx, \xi} + \widehat{V}_{\Dx, \xi})(\xi) &= 1 \hspace{1.9cm} \xi \in \mathcal{C}^c,  \label{eq:id1}\\
	\widehat{y}_{\Dx, \xi} \widehat{V}_{\Dx, \xi}(\xi) &= \widehat{U}_{\Dx, \xi}^2(\xi)\hspace{0.8cm} \xi \in \mathcal{C}^c. \label{eq:id2}
\end{align}
\end{subequations}
This proves \emph{ii)} and \emph{iii)}, because $\mathcal{C}^c$ is of full measure. Moreover,  \emph{iv)} follows by \eqref{eq:rep_y} and the fact that $f$ is bijective and satisfies $f^{-1}(\mathcal{S}_{\Dx}) = \mathcal{S} \setminus (\bigcup_{j \in \mathbb{Z}}A_{3j})$. 

Next we show that $\widehat{V}_{\Dx, \xi} \in L^1(\R)$. To this end, note that  
\begin{equation*}
	\bar{F}_{\infty} = \int_{\R}\bar{V}_{\Dx, \xi}(\xi)d\xi  = 
	\sum_{j \in \mathbb{Z}} \int_{\xi_{3j}}^{\xi_{3j+3}} \bar{V}_{\Dx, \xi}(\xi)d\xi,
\end{equation*}
and as each term is nonnegative, this series is {\em absolutely convergent}. As a consequence, since $f$ maps $[\xi_{3j}, \xi_{3j+3})$ onto itself and $\widehat{V}_{\Dx, \xi}$ is nonnegative by \eqref{eq:nonnegativity}, it suffices to show, for any $j \in \mathbb{Z}$, that  
\begin{equation}\label{eq:energyPreserved}
	\int_{\xi_{3j}}^{\xi_{3j+3}} \bar{V}_{\Dx, \xi}(\xi)d\xi = \int_{\xi_{3j}}^{\xi_{3j+3}} \widehat{V}_{\Dx, \xi}(\xi)d\xi.  
\end{equation}
First, note that \eqref{eq:energyPreserved} is an immediate consequence of \eqref{eq:f_no} for $N_{2j}=0$. We therefore assume that $N_{2j}\!  \geq 1$, for which we obtain, after using \eqref{eq:locInv}, 
\begin{align}\label{eq:decompVxi}
	\int_{\xi_{3j}}^{\xi_{3j+3}}\bar{V}_{\Dx, \xi}(\xi)d\xi &= \int_{\xi_{3j}}^{\xi_{3j+1}}\bar{V}_{\Dx, \xi}(\xi)d\xi  + \int_{\xi_{3j+1}}^{\xi_{k_{N_{2j}}\!, \mathrm{r}}} \bar{V}_{\Dx, \xi}(\xi)d\xi \nonumber
	\\ & \qquad + \int_{\xi_{k_{N_{2j}}\!, \mathrm{r}}}^{\xi_{3j+3}} \bar{V}_{\Dx, \xi}(f(\xi))d\xi. 
\end{align}

Recall that $\bar{\mu}(\{z_{k_n}\}) = \bar{F}(z_{k_n}+) - \bar{F}(z_{k_n})$,  $\xi_{k_0, \mathrm{r}}= \xi_{3j}$, and let $\lambda_m := \sum_{n=1}^m \bar{\mu}(\{z_{k_n}\})$ with $\lambda_0=0$. Due to \eqref{eq:explicitS_dx}--\eqref{eq:singContrib} we have $\xi_{3j+1} = \xi_{3j} + \lambda_{N_{2j}}$. Furthermore, using the notation from \eqref{eq:discPointLagr}, the first term in \eqref{eq:decompVxi} takes the form  
\begin{align}\label{eq:fSecond}
	 \int_{\xi_{3j}}^{\xi_{3j+1}}\bar{V}_{\Dx, \xi}(\xi)d\xi  &= \sum_{m=1}^{N_{2j}} \int_{\xi_{3j} + \lambda_{m-1}}^{\xi_{3j} + \lambda _m} \bar{V}_{\Dx, \xi}(\xi)d\xi \nonumber
	 \\ &= \sum_{m=1}^{N_{2j}}\int_{\xi_{k_m\!, \mathrm{l}} - \sum_{n=1}^{m}\!(\xi_{k_n\!, \mathrm{l}} - \xi_{k_{n-1}\!, \mathrm{r}})}^{\xi_{k_{m\!, \mathrm{r}}} - \sum_{n=1}^{m}\!(\xi_{k_n\!, \mathrm{l}} - \xi_{k_{n-1}\!, \mathrm{r}})}\!\!\bar{V}_{\Dx, \xi}(\xi)d\xi \nonumber
	 \\ &= \sum_{m=1}^{N_{2j}}\int_{\xi_{k_m\!, \mathrm{l}}}^{\xi_{k_m\!, \mathrm{r}}}\! \!\bar{V}_{\Dx, \xi}\bigg(\xi - \sum_{n=1}^m(\xi_{k_n\!, \mathrm{l}}-\xi_{k_{n-1}\!, \mathrm{r}})\!\bigg)d\xi \nonumber
	 \\ &= \sum_{m=1}^{N_{2j}}\int_{\xi_{k_m\!, \mathrm{l}}}^{\xi_{k_m\!, \mathrm{r}}}\! \!\bar{V}_{\Dx, \xi}(f(\xi))d\xi.  
\end{align}
 
 By proceeding similarly for the second term in \eqref{eq:decompVxi}, one obtains 
 \begin{align}\label{eq:fLast}
 	\int_{\xi_{3j+1}}^{\xi_{k_{N_{2j}}\!, \mathrm{r}}} \bar{V}_{\Dx, \xi}(\xi)d\xi &= \sum_{m=1}^{N_{2j}} \nonumber
 \int_{\xi_{k_{m-1}\!, \mathrm{r}} + \lambda_{N_{2j}} - \lambda_{m-1}}^{\xi_{k_m\!, \mathrm{r}} +  \lambda_{N_{2j}} - \lambda_m}\!\!\bar{V}_{\Delta x, \xi}(\xi)d\xi \nonumber 
 \\ &= \sum_{m=1}^{N_{2j}} \int_{\xi_{k_{m-1}\!, \mathrm{r}}}^{\xi_{k_m\!, \mathrm{l}}} \bar{V}_{\Dx, \xi}(f(\xi))d\xi. 
\end{align}
Combining \eqref{eq:decompVxi}, \eqref{eq:fSecond},  and \eqref{eq:fLast} gives \eqref{eq:energyPreserved}, which in turn implies $\|\widehat{V}_{\Dx, \xi}\|_1 = \bar{F}_{\infty}$.
Moreover, \eqref{eq:nonnegativity} and \eqref{eq:id1} imply $\bar{V}_{\Dx, \xi}(f(\xi)) \leq 1$ for $\xi \in \mathcal{C}^c$, hence  
\begin{equation}\label{eq:argVhat}
	\|\widehat{V}_{\Dx, \xi}\|_2^2 \leq \int_{\R} |\widehat{V}_{\Dx, \xi}(\xi)|d\xi = \bar{F}_{\infty}. 
\end{equation}
Note that $\widehat{y}_{\Dx, \xi} - 1 \in L^2(\R) \cap L^{\infty}(\R)$ is an immediate consequence of \eqref{eq:nonnegativity} and \eqref{eq:id1}. At last, applying  \eqref{eq:id2} yields 
\begin{equation*}
	\|\widehat{U}_{\Dx, \xi}\|_{\infty} = \|\sqrt{\widehat{V}_{\Dx, \xi}\widehat{y}_{\Dx, \xi}}\|_{\infty} \leq \frac{1}{2}\|\widehat{V}_{\Dx, \xi}\|_{\infty} + \frac{1}{2}\|\widehat{y}_{\Dx, \xi}\|_{\infty} \leq 1, 
\end{equation*}
and, moreover
\begin{equation*}
	\int_{\R} \widehat{U}_{\Dx,\xi}^2(\xi)d\xi = \int_{\R} \widehat{V}_{\Dx, \xi}\widehat{y}_{\Dx, \xi}(\xi)d\xi \leq \int_{\R} \widehat{V}_{\Dx, \xi}(\xi)d\xi = \bar{F}_{\infty}.
\end{equation*}
This establishes \emph{i)} and hence the asserted statement. 
\end{proof}

\subsection{A priori estimate for $\boldsymbol{\mu}_{\mathrm{sing}}=\boldsymbol{\mu}_{\mathrm{d}}$}
 
The following error estimate is fundamental for establishing a convergence rate in Lagrangian coordinates.

\begin{theorem}\label{thm:Discrepancy}
	Given $(\bar{u}, \bar{\mu}) \in \D$ with $\bar{\mu}_{\mathrm{sc}} = 0$, let $\bar{X}=L \left((\bar{u}, \bar{\mu})  \right)$ and \newline $\bar{X}_{\Dx} = L \circ P_{\Dx} \left((\bar{u}, \bar{\mu}) \right)$.   Moreover, for any $t\geq 0$, introduce $X(t) = S_t(\bar{X})$ and $X_{\Delta x}(t) = S_{t}(\bar{X}_{\Dx})$, then 
	\begin{align} \label{eq:bound_U}
		\|U(t) - U_{\Dx}(t) \|_{\infty} &\leq(1 + 2\sqrt{2})\bar{F}_{\mathrm{ac}, \infty}^{\nicefrac{1}{2}}\Dx^{\nicefrac{1}{2}} + t\Dx  + t\|\delta_{2\Dx}\bar{u}_{x}^2\|_1 \nonumber 
		\\ & \qquad + \frac{1}{4}(1+\alpha)t \|\bar{V}_{\xi} - \bar{V}_{\Dx, \xi}(f)\|_{L^1(\mathcal{S}^c)} \nonumber
		 \\ & \qquad + \frac{1}{4}t \|g(\bar{X}) - g(\bar{X}_{\Dx})(f)  \|_{L^1(\mathcal{S}^c)} 
		 \\ & \qquad + \frac{\alpha}{2} \sqrt{2 \Big(1 + \frac{1}{4}t^2 \Big)\bar{F}_{\infty}}\|\bar{U}_{\xi} - \bar{U}_{\Dx, \xi}(f)\|_{L^2(\mathcal{S}^c)},  \nonumber
	\end{align}
	where $f$ is the function defined in \eqref{eq:map_f}. If $\bar{\mu}\! = \bar{\mu}_{\mathrm{ac}}$, then $f = \id$ and $\mathcal{S}^c$ is of full measure. 
\end{theorem}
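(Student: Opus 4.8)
The plan is to estimate the two summands on the right of \eqref{eq:estU}. The first, $\|\bar{U}-\bar{U}_{\Dx}\|_{\infty}$, is bounded directly by \eqref{eq:initial_approx_U} and produces the term $(1+2\sqrt{2})\bar{F}_{\mathrm{ac},\infty}^{\nicefrac{1}{2}}\Dx^{\nicefrac{1}{2}}$; everything else must come out of the $s$-integral. So fix $s\in[0,t]$ and $\xi\in\R$, and use \eqref{eq:LagrSystem3} to write $V_{\xi}(s,\eta)=\kappa(s,\eta)\bar{V}_{\xi}(\eta)$ and $V_{\Dx,\xi}(s,\eta)=\kappa_{\Dx}(s,\eta)\bar{V}_{\Dx,\xi}(\eta)$ with dissipation factors $\kappa,\kappa_{\Dx}\in\{1-\alpha,1\}$ determined by the wave-breaking functions $\tau,\tau_{\Dx}$ of \eqref{eq:waveBreaking}. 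The crucial observation is that $\tau_{\Dx}$ is a \emph{pointwise} function of $(\bar{y}_{\Dx,\xi},\bar{U}_{\Dx,\xi})$, so $\kappa_{\Dx}(s,f(\cdot))\,\bar{V}_{\Dx,\xi}(f(\cdot))$ — with $f$ the bijection of \eqref{eq:map_f} — is precisely the time-evolved density of the transformed triple of Lemma~\ref{lem:groupAction}; since $f(\mathcal{S})=\mathcal{S}_{\Dx}$ up to a null set and $\kappa\equiv\kappa_{\Dx}\equiv 1$ on these sets (there $\bar{U}_{\xi}=\bar{y}_{\xi}=0$, so $\tau=0$), identity \eqref{eq:vanishing} sharpens to $V_{\xi}(s,\eta)-\kappa_{\Dx}(s,f(\eta))\,\bar{V}_{\Dx,\xi}(f(\eta))=0$ for a.e.\ $\eta\in\mathcal{S}$.

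Next I would split $\R$ into the Lagrangian cells $[\xi_{3j},\xi_{3j+3})$ and distinguish the cells lying entirely to the left of $\xi$ from the single partial cell containing $\xi$. Because $f$ restricts to a measure-preserving (piecewise-translation) bijection of each cell onto itself, cf.\ \eqref{eq:locInv}, the computation behind \eqref{eq:energyPreserved} — and its straightforward analogues for $\bar{U}_{\Dx,\xi}$, for $g(\bar{X}_{\Dx})$, and in fact for any cell integrand — shows that the integral over a full cell of $V_{\xi}(s,\cdot)-V_{\Dx,\xi}(s,\cdot)$ equals the integral of $V_{\xi}(s,\cdot)-\kappa_{\Dx}(s,f(\cdot))\bar{V}_{\Dx,\xi}(f(\cdot))$, hence, by the previous paragraph, of that same quantity restricted to $\mathcal{S}^c$. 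Summing over these cells and bounding the integrand on $\mathcal{S}^c$ by $|\bar{V}_{\xi}-\bar{V}_{\Dx,\xi}(f)|$ plus a wave-breaking mismatch term $\alpha\,\bar{V}_{\xi}$ supported on the set where $\kappa(s,\cdot)$ and $\kappa_{\Dx}(s,f(\cdot))$ disagree, one recovers the structure of the proof of Lemma~\ref{lem:estimateVxiTime} (i.e.\ of \cite[Thm.~4.9]{AlphaAlgorithm}): the first term yields $\|\bar{V}_{\xi}-\bar{V}_{\Dx,\xi}(f)\|_{L^1(\mathcal{S}^c)}$, and the mismatch term is absorbed, via $y_{\xi}V_{\xi}=U_{\xi}^2$ for the exact solution and \eqref{eq:id2} for the transformed one, into $\|g(\bar{X})-g(\bar{X}_{\Dx})(f)\|_{L^1(\mathcal{S}^c)}$ and $\|\bar{U}_{\xi}-\bar{U}_{\Dx,\xi}(f)\|_{L^2(\mathcal{S}^c)}$. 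The full-line integral $\int_{\R}(V_{\xi}(s,\eta)-V_{\Dx,\xi}(s,\eta))\,d\eta=V_{\infty}(s)-V_{\Dx,\infty}(s)$, which enters once we write $\int_{-\infty}^{\xi}-\int_{\xi}^{\infty}=2\int_{-\infty}^{\xi}-\int_{\R}$, is treated the same way (there is no partial cell there).

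For the partial cell $[\xi_{3j},\xi)$ I would not bring in $f$ at all but estimate $\int_{\xi_{3j}}^{\xi}(V_{\xi}(s,\eta)-V_{\Dx,\xi}(s,\eta))\,d\eta$ directly. Writing $V_{\xi}=\bar{V}_{\xi}-\alpha\,\chi_{\{s\geq\tau>0\}}\bar{V}_{\xi}$ and $1-\bar{V}_{\xi}=\bar{y}_{\xi}$ (and likewise with a $\Dx$), and using that $\bar{y}$ and $\bar{y}_{\Dx}$ send $[\xi_{3j},\xi_{3j+3})$ into the spatial cell $[x_{2j},x_{2j+2}]$, so that $\bar{y}(\xi)-\bar{y}(\xi_{3j})$ and $\bar{y}_{\Dx}(\xi)-\bar{y}_{\Dx}(\xi_{3j})$ lie in $[0,2\Dx]$, together with \eqref{eq:initial_approx_y}--\eqref{eq:initial_approx_H}, converts this partial-cell error into an $\mathcal{O}(\Dx)$ contribution plus a further energy-removal term that again feeds into the $g$- and $\bar{U}_{\xi}$-estimates; the remaining discrepancy between $\bar{F}_{\mathrm{ac}}$ and $\bar{F}_{\Dx,\mathrm{ac}}$ inside cells (they coincide only at the even grid points) is precisely the quantity estimated in \eqref{eq:con_ux3_1}, contributing $\|\delta_{2\Dx}\bar{u}_x^2\|_1$. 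Collecting all $\mathcal{O}(\Dx)$ pieces over $s\in[0,t]$ and keeping track of the prefactor $\frac{1}{4}$ in \eqref{eq:estU} (against the factor $2$ above) produces the summands $t\Dx$ and $t\|\delta_{2\Dx}\bar{u}_x^2\|_1$, completing \eqref{eq:bound_U}. The final assertion is immediate: if $\bar{\mu}=\bar{\mu}_{\mathrm{ac}}$ then $N_{2j}=0$ for all $j$, so $f_{3j}=\id$ by \eqref{eq:f_no} and hence $f=\id$, while $|\mathcal{S}|=\bar{\mu}_{\mathrm{sing}}(\R)=0$ as observed in Section~\ref{sec:noSingular}.

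The main obstacle is the bookkeeping of the last two paragraphs: on the one hand, making rigorous that the substitution $\eta=f(\eta')$ commutes, cell by cell, with the Lagrangian flow — which is discontinuous in time at the breaking times given by $\tau$, $\tau_{\Dx}$ — and this is exactly what Lemma~\ref{lem:groupAction}, in particular the structural identities \eqref{eq:transformId}, is tailored to supply; on the other hand, pinning the partial-cell contribution to order $\Dx$ rather than to the mass of that cell, which may be $\mathcal{O}(1)$ when $\bar{\mu}$ places a large atom inside it. The way around the latter is not to control the singular masses individually but to exploit that the \emph{spatial} width of a cell is only $2\Dx$, through the pointwise relations $\bar{V}_{\xi}=1-\bar{y}_{\xi}$ and $\bar{V}_{\Dx,\xi}=1-\bar{y}_{\Dx,\xi}$.
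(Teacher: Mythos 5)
Your outline follows the paper's proof: bound the first term of \eqref{eq:estU} by \eqref{eq:initial_approx_U}, split the spatial integral into full Lagrangian cells plus the one containing $\xi$, use the cellwise measure-preserving bijection $f$ (with identity \eqref{eq:vanishing} and the commutation with the flow supplied by Lemma~\ref{lem:groupAction} and \eqref{eq:explicitExp}) to kill the $\mathcal{S}$-contribution on full cells and reduce to the $\mathcal{S}^c$-norms of Lemma~\ref{lem:estimateVxiTime}, then treat the containing cell directly. The algebraic variant $\int_{-\infty}^{\xi}-\int_{\xi}^{\infty}=2\int_{-\infty}^{\xi}-\int_{\R}$ and the choice $[\xi_{3j},\xi)$ instead of the full cell $[\xi_{3N},\xi_{3N+3})$ are cosmetic. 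You also correctly identify the one nontrivial point in the local estimate, namely that the cell may carry $\mathcal{O}(1)$ singular mass, and the right remedy: exploit $\bar V_{\xi}=1-\bar y_{\xi}$ and the $\mathcal{O}(\Dx)$ spatial width rather than the Lagrangian length.

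The one inaccuracy is in your description of the partial-cell bookkeeping. After writing $V_{\xi}-V_{\Dx,\xi}=(\bar y_{\Dx,\xi}-\bar y_{\xi})-\alpha\big(\chi_{\{s\geq\tau>0\}}\bar V_{\xi}-\chi_{\{s\geq\tau_{\Dx}>0\}}\bar V_{\Dx,\xi}\big)$, the first piece integrates to $\bar y_{\Dx}(\xi)-\bar y(\xi)=\mathcal{O}(\Dx)$ (since $\bar y(\xi_{3j})=\bar y_{\Dx}(\xi_{3j})=x_{2j}$ and \eqref{eq:initial_approx_y} holds), and the energy-removal piece does not feed into the $g$- or $\bar U_{\xi}$-estimates at all: since $\chi_{\{s\geq\tau>0\}}\bar V_{\xi}$ is supported on $\mathcal{S}^c$, where $\bar V_{\xi}=\bar u_x^2(\bar y)\bar y_{\xi}$, one simply has $\int_{[\xi_{3j},\xi)}\chi\bar V_{\xi}\,d\eta\leq\int_{x_{2j}}^{x_{2j+2}}\bar u_x^2\,dx\leq\|\delta_{2\Dx}\bar u_x^2\|_1$, and identically for the $\Dx$-term after invoking \eqref{eq:identityFac}. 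In particular the $\|\delta_{2\Dx}\bar u_x^2\|_1$ term is not the projection error \eqref{eq:con_ux3_1} — it is the cruder $L^1$-mass of $\bar u_x^2$ over a $2\Dx$-window, cf.\ the steps \eqref{eq:energy1_res1}--\eqref{eq:energy1_res2} — and no third term arises. Had you literally routed the partial-cell energy-removal through $g$ or $\bar U_{\xi}$ (which requires $f$-type cancellation you have just opted not to use there) you would be stuck with the full singular mass of the cell. With this correction, your proof coincides with the paper's.
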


\begin{remark}
	The reason we require the singular continuous part of $\bar{\mu}$ to be zero can be explained as follows. Define the generalized inverses  
	\begin{equation*}
		\mathcal{Z}_1(x) := \inf \{\xi \in \R: \bar{y}(\xi) = x\} \hspace{0.5cm} \text{ and } \hspace{0.5cm} 
		\mathcal{Z}_2(x):= \sup \{\xi \in \R: \bar{y}(\xi) = x \}, 
	\end{equation*}
	then $\mathcal{S}=\mathcal{S}_{\mathrm{d}} \cup \mathcal{S}_{\mathrm{sc}}$, where 
	\begin{align*}
		\mathcal{S}_{\mathrm{d}} &= \{\xi \in \mathcal{S}: \mathcal{Z}_1(\bar{y}(\xi)) < \mathcal{Z}_2(\bar{y}(\xi))\}, \\
		\mathcal{S}_{\mathrm{sc}} &= \{\xi \in \mathcal{S}: \mathcal{Z}_1(\bar{y}(\xi)) = \mathcal{Z}_2(\bar{y}(\xi)) \}. 
	\end{align*}
	Furthermore, by \cite[Prop. 2.1]{RegularityStructure}, we have
	\begin{equation*}
		\bar{\mu}_{\mathrm{d}} = \bar{\mu}\vert_{\bar{y}(\mathcal{S}_{\mathrm{d}})} \hspace{0.5cm} \text{and} \hspace{0.5cm} \bar{\mu}_{\mathrm{sc}} = \bar{\mu}\vert_{\bar{y}(\mathcal{S}_{\mathrm{sc}})}. 
	\end{equation*}
	Thus, $\bar{\mu}_{\mathrm{d}}$ can be associated with intervals in $\mathcal{S}_{\mathrm{d}}$, while $\bar{\mu}_{\mathrm{sc}}$ is associated with the totally disconnected set $\mathcal{S}_{\mathrm{sc}}$. The case $\bar{\mu}_{\mathrm{sc}} \neq 0$ would therefore lead to two major issues: \emph{i)} we cannot construct $f$ explicitly and \emph{ii)} a function $f$ satisfying $f^{-1}(\mathcal{S}_{\Dx}) = \mathcal{S}\setminus E$, where $|E| =0$, need not be measurable as the support of $\bar{\mu}_{\mathrm{sc}}$ is uncountable. 
\end{remark}

\begin{proof}
See Section~\ref{sec:noSingular} for the case $\bar{\mu}_{\mathrm{sing}}\! = 0$. 

Therefore, let $\bar{\mu}_{\mathrm{sing}} = \bar{\mu}_{\mathrm{d}}$. Thanks to Lemma~\ref{lem:convInt}, it  only remains to estimate the second term in \eqref{eq:estU}. To this end, note that for any $\xi \in \R$ there is $N \in \mathbb{Z}$ such that $\xi \in [\xi_{3N}, \xi_{3N+3})$ and hence 
\begin{subequations}
\begin{align}
    \biggl| \int_0^t \biggl( \int_{-\infty}^{\xi}&\left(V_{\xi}(s, \eta)-V_{\Dx, \xi}(s, \eta) \right)d\eta  - \int_{\xi}^{\infty} \left(V_{\xi}(s, \eta) - V_{\Dx, \xi}(s, \eta) \right)d\eta \biggr)ds \biggr| \nonumber 
    \\ &\leq \biggl| \int_0^t \int_{-\infty}^{\xi_{3N}} (V_{\xi}(s, \eta)-V_{\Dx, \xi}(s, \eta))d\eta ds \biggr | \label{eq:twoFirst1}
    \\ & \quad + \left | \int_0^t \int_{\xi_{3N+3}}^{\infty}(V_{\xi}(s, \eta)-V_{\Dx, \xi}(s, \eta))d\eta ds \right| \label{eq:twoFirst2} 
    \\ & \quad + \int_0^t \int_{\xi_{3N}}^{\xi_{3N+3}} \big|V_{\xi}(s, \eta) - V_{\Dx, \xi}(s, \eta) \big|d\eta ds. \label{eq:twoLast}
 \end{align}
 \end{subequations}

In order to estimate \eqref{eq:twoFirst1} and \eqref{eq:twoFirst2}, we use the mapping $f$ defined in \eqref{eq:map_f}. In particular, by \eqref{eq:energyPreserved}, we have 
\begin{equation}\label{eq:change1}
	\int_{-\infty}^{\xi_{3N}}V_{\Dx, \xi}(t, \eta)d\eta =\int_{-\infty}^{\xi_{3N}} V_{\Dx, \xi}(t, f(\eta))d\eta,
\end{equation} 
and 
\begin{equation}\label{eq:change2}
	\int_{\xi_{3N+3}}^{\infty}V_{\Dx, \xi}(t, \eta)d\eta = \int_{\xi_{3N+3}}^{\infty}V_{\Dx, \xi}(t, f(\eta))d\eta. 
\end{equation}
In addition, by differentiating \eqref{eq:LagrSystem} with respect to $\xi$ and subsequently solving the corresponding system of ODEs with initial data $(\bar{y}_{\Dx, \xi}(f), \bar{U}_{\Dx, \xi}(f), \bar{V}_{\Dx, \xi}(f))$, one obtains, for a.e. $\xi \in \R$, 
\begin{align}\label{eq:explicitExp}
	\widehat{y}_{\Dx, \xi}(t, \xi) &= \begin{cases}
	\bar{y}_{\Dx, \xi}(f(\xi)) + t \bar{U}_{\Dx, \xi}(f(\xi)) 
	\\ \quad + \frac{t^2}{4} \bar{V}_{\Dx, \xi}(f(\xi)), & t < \widehat{\tau}_{\Dx}(\xi) \text{ or} \widehat{\tau}_{\Dx}(\xi)  = 0,  \\
	 \frac{1}{4}(1-\alpha) (t-\widehat{\tau}_{\Dx}(\xi))^2\bar{V}_{\Dx, \xi}(f(\xi)), & 0 < \widehat{\tau}_{\Dx}(\xi) \leq t, 
	 \end{cases} \nonumber \\
	\widehat{U}_{\Dx, \xi}(t, \xi) &= \begin{cases}
	 \bar{U}_{\Dx, \xi}(f(\xi)) + \frac{t}{2} \bar{V}_{\Dx, \xi}(f(\xi)), & t < \widehat{\tau}_{\Dx}(\xi) \text{ or} \widehat{\tau}_{\Dx}(\xi)  = 0,  \\
	 \frac{1}{2}(1-\alpha)(t-\widehat{\tau}_{\Dx}(\xi))\bar{V}_{\Dx, \xi}(f(\xi)), & 0 < \widehat{\tau}_{\Dx}(\xi) \leq t, 
	 \end{cases} \nonumber\\ 
	\widehat{V}_{\Dx, \xi}(t, \xi) &= \begin{cases}
	\bar{V}_{\Dx, \xi}(f(\xi)), & t < \widehat{\tau}_{\Dx}(\xi)  \text{ or } \widehat{\tau}_{\Dx}(\xi)  = 0,  \\
	(1-\alpha)\bar{V}_{\Dx, \xi}(f(\xi)), & 0 < \widehat{\tau}_{\Dx}(\xi)  \leq t, 
	\end{cases} \raisetag{-35pt}
\end{align}
\vspace{0.1cm}
where
\begin{align*}
	\widehat{\tau}_{\Dx}(\xi) &= 
	\begin{cases}
	0, & \xi \in \mathcal{S}, \\
	\tau(f(\xi)), & \text{otherwise}, 
	\end{cases}
\end{align*}
and $\widehat{X}_{\Dx, \xi} = (\widehat{y}_{\Dx, \xi}, \widehat{U}_{\Dx, \xi}, \widehat{V}_{\Dx, \xi})(t, \xi) = (y_{\Dx, \xi}, U_{\Dx, \xi}, V_{\Dx, \xi})(t, f(\xi))$. The linear structure of \eqref{eq:explicitExp} implies that $\widehat{X}_{\Dx, \xi}$ preserves \emph{i)} and \emph{iii)} of Lemma~\ref{lem:groupAction}, and \emph{ii)} with $\widehat{y}_{\Dx, \xi}(t) + \widehat{V}_{\Dx, \xi}(0) > c$ almost everywhere, for some $c > 0$.  Moreover, due to  \eqref{eq:LagrSystem3} and  \eqref{eq:transformId} we have, for all $t\geq 0$, 
\begin{align}\label{eq:evolBreak}
\begin{aligned}
    V_{\xi}(t, \xi) &= \bar{V}_{\xi}(\xi) = 1, \hspace{1.4cm} \text{ a.e. }\xi \in \mathcal{S} , \\
    \widehat{V}_{\Dx, \xi}(t, \xi) &= \bar{V}_{\Dx, \xi}(f(\xi)) = 1, \hspace{0.5cm} \text{ a.e. }\xi \in \mathcal{S}.
\end{aligned}
\end{align}
Consequently, combining \eqref{eq:change1}, \eqref{eq:change2}, and \eqref{eq:evolBreak} we get
\begin{align}\label{eq:transformedVxi}
	 \biggl |\int_0^t \int_{-\infty}^{\xi_{3N}} \!\!\bigl(V_{\xi}(s, \eta)&-V_{\Dx, \xi}(s, \eta)\bigr)d\eta ds \biggr | \! \!+ 	\biggl |\int_0^t \int_{\xi_{3N+3}}^{\infty}\!\! \bigl(V_{\xi}(s, \eta)-V_{\Dx, \xi}(s, \eta) \bigr)d\eta ds \biggr| 		\nonumber
    \\ & \leq \int_0^t \int_{\mathcal{S}^c} \left|V_{\xi}(s, \eta) - \widehat{V}_{\Dx, \xi}(s, \eta) \right|d\eta ds. 
\end{align}
To estimate this term further, we follow the argument in the proof of \cite[Thm. 4.9]{AlphaAlgorithm}. The details are included here for completeness. Firstly, in order to ease notation, we introduce, based on the sets from \eqref{eq:omegas}, 
\begin{equation}\label{eq:setNotation}
	\Omega_{m, n}(t) := \Omega_m(X(t)) \cap \Omega_n(\widehat{X}_{\Dx} (t)), \hspace{0.35cm} m, n \in \{c, d\}, 
\end{equation}
where $\widehat{X}_{\Dx}(t, \xi) = (y_{\Dx}, U_{\Dx}, V_{\Dx})(t, f(\xi))$. These sets form a disjoint partition of $\R$ for any $t \geq 0$. In addition let $\bar{\Omega}_{m, n} = \mathcal{S}^c \cap \Omega_{m, n }(0)$. 

Since, for any finite $T \geq 0$, $V_{\xi} \in L^1([0, T] \times \R)$ and $\widehat{V}_{\Dx, \xi} \in L^1([0, T] \times \R)$, where the latter follows by \eqref{eq:explicitExp} and the proof of Lemma~\ref{lem:groupAction}, we can apply Fubini's theorem to write
\begin{subequations}
\begin{align}
	\int_0^t \int_{\mathcal{S}^c} \left|V_{\xi}(s, \eta) - \widehat{V}_{\Dx, \xi}(s, \eta) \right|d\eta ds &= \int_{\bar{\Omega}_{c, c}} \int_0^t \big|V_{\xi}(s, \eta) - \widehat{V}_{\Dx, \xi}(s, \eta) \big|ds d\eta  \nonumber
	\\ & \quad +  \int_{\bar{\Omega}_{c, d}} \int_0^t \big|V_{\xi}(s, \eta) - \widehat{V}_{\Dx, \xi}(s, \eta) \big|ds d\eta  \label{eq:term2}
	\\ & \quad + \int_{\bar{\Omega}_{d, c}} \int_0^t \big|V_{\xi}(s, \eta) - \widehat{V}_{\Dx, \xi}(s, \eta) \big|ds d\eta  \label{eq:term3}
	\\ & \quad + \int_{\bar{\Omega}_{d, d}} \int_0^t \big|V_{\xi}(s, \eta) - \widehat{V}_{\Dx, \xi}(s, \eta) \big|ds d\eta. \label{eq:term4}
\end{align}
\end{subequations}
Recall that $\bar{\Omega}_{c, c}$ consists of the points for where neither $X(t)$ nor $\widehat{X}_{\Dx}(t)$ experience wave breaking, therefore by \eqref{eq:LagrSystem} and \eqref{eq:explicitExp}, 
\begin{equation}\label{eq:simplest}
	\int_{\bar{\Omega}_{c, c}} \int_0^t \big|V_{\xi}(s, \eta) - \widehat{V}_{\Dx, \xi}(s, \eta) \big|ds d\eta   = \int_{\bar{\Omega}_{c, c}} \int_0^t \big|\bar{V}_{\xi}(\eta) - \bar{V}_{\Dx, \xi}(f(\eta)) \big|ds d\eta.
\end{equation}
The terms \eqref{eq:term3} and \eqref{eq:term4} can be estimated in the same way, hence we just focus on  \eqref{eq:term3}. Any $\xi \in \Omega_d(\widehat{X}_{\Dx}(0))$ can enter $\Omega_c(\widehat{X}_{\Dx}(s))$ at some time $s \in (0, t]$, in particular, this happens at $s=\widehat{\tau}_{\Dx}(\xi)$, cf. \eqref{eq:explicitExp}. Thus, recalling \eqref{eq:function_g}, we find 
\begin{align}\label{eq:est2}	
	\int_{\bar{\Omega}_{c, d}} \int_0^t \big|V_{\xi}(s, \eta) &- \widehat{V}_{\Dx, \xi}(s, \eta) \big|ds d\eta \nonumber
	\\ &= \int_{\bar{\Omega}_{c, d}\cap \Omega_{c, d}(t)} \int_0^t \big|\bar{V}_{\xi}(\eta) - \bar{V}_{\Dx, \xi}(f(\eta)) \big|ds d\eta \nonumber
	\\ & \qquad + \int_{\bar{\Omega}_{c, d}\cap \Omega_{c, c}(t)}\int_0^{\widehat{\tau}_{\Dx}(\eta)}\big|\bar{V}_{\xi}(\eta) - \bar{V}_{\Dx, \xi}(f(\eta)) \big|ds d\eta \nonumber
	\\ & \qquad + \int_{\bar{\Omega}_{c, d}\cap \Omega_{c, c}(t)}\int_{\widehat{\tau}_{\Dx}(\eta)}^t \big|\bar{V}_{\xi}(\eta) - (1-\alpha)\bar{V}_{\Dx, \xi}(f(\eta)) \big|ds d\eta \nonumber
	\\ & \leq \int_{\bar{\Omega}_{c, d}} \int_0^t \big|\bar{V}_{\xi}(\eta) - \bar{V}_{\Dx, \xi}(f(\eta)) \big|ds d\eta \nonumber
	\\ & \qquad + \int_{\bar{\Omega}_{c, d}} \int_0^t \big| g(\bar{X})(\eta) - g(\bar{X}_{\Dx})(f(\eta))\big|ds d\eta. 
\end{align}

As both $X(t)$ and $\widehat{X}_{\Dx}(t)$ can experience wave breaking in $\bar{\Omega}_{d, d}$, it is natural to decompose \eqref{eq:term4} further based on the sets from \eqref{eq:setNotation} at time $t$. This yields  
\begin{subequations}
 \begin{align}
 	\int_{\bar{\Omega}_{d, d}} \int_0^t \big|V_{\xi}(s, \eta) &- \widehat{V}_{\Dx, \xi}(s, \eta) \big|ds d\eta \nonumber
	\\ &= \int_{\bar{\Omega}_{d, d} \cap \Omega_{d, d}(t)} \int_0^t \big|V_{\xi}(s, \eta) - \widehat{V}_{\Dx, \xi}(s, \eta) \big|ds d\eta \label{eq:term20}
	\\ & \qquad +\int_{\bar{\Omega}_{d, d} \cap \Omega_{d, c}(t)} \int_0^t \big|V_{\xi}(s, \eta) - \widehat{V}_{\Dx, \xi}(s, \eta) \big|ds d\eta \label{eq:term21}
	\\ & \qquad +   \int_{\bar{\Omega}_{d, d} \cap \Omega_{c, d}(t)} \int_0^t \big|V_{\xi}(s, \eta) - \widehat{V}_{\Dx, \xi}(s, \eta) \big|ds d\eta \label{eq:term22}
 	\\ & \qquad + \int_{\bar{\Omega}_{d, d} \cap \Omega_{c, c}(t)} \int_0^t \big|V_{\xi}(s, \eta) - \widehat{V}_{\Dx, \xi}(s, \eta) \big|ds d\eta.  \label{eq:term23}
\end{align}
  \end{subequations}  
  No wave breaking takes place in \eqref{eq:term20}, such that one can proceed as in \eqref{eq:simplest}. Moreover, the structures of \eqref{eq:term21} and \eqref{eq:term22} are similar, we therefore only present details on how to estimate \eqref{eq:term21}. In particular, 
 \begin{align*}
 	\int_{\bar{\Omega}_{d, d} \cap \Omega_{d, c}(t)} \int_0^t \big|V_{\xi}&(s, \eta) - \widehat{V}_{\Dx, \xi}(s, \eta) \big|ds d\eta 
	\\ & \leq \int_{\bar{\Omega}_{d, d} \cap \Omega_{d, c}(t)} \int_0^{\widehat{\tau}_{\Dx}(\eta)}\big|\bar{V}_{\xi}(\eta) - \bar{V}_{\Dx, \xi}(f(\eta)) \big|ds d\eta 
	\\ & \qquad + \int_{\bar{\Omega}_{d, d} \cap \Omega_{d, c}(t)} \int_{\widehat{\tau}_{\Dx}(\eta)}^t \big|\bar{V}_{\xi}(\eta) - (1-\alpha)\bar{V}_{\Dx, \xi}(f(\eta)) \big|ds d\eta 
	\\ & \leq  \int_{\bar{\Omega}_{d, d} \cap \Omega_{d, c}(t)} \int_0^t \big|\bar{V}_{\xi}(\eta) - \bar{V}_{\Dx, \xi}(f(\eta)) \big|ds d\eta
	\\ & \qquad + \int_{\bar{\Omega}_{d, d} \cap \Omega_{d, c}(t)} \int_{\widehat{\tau}_{\Dx}(\eta)}^t \alpha \bar{V}_{\xi}(\eta)ds d\eta. 
 \end{align*}
  Furthermore, note that $U_{\xi}(\cdot, \xi)$ and $\widehat{U}_{\Dx, \xi}(\cdot, \xi)$ are continuous and increasing functions which vanish at $t=\tau(\xi)$ and $t=\widehat{\tau}_{\Dx}(\xi)$, respectively. Hence,  by \eqref{eq:LagrSystem} and \eqref{eq:explicitExp}, we have 
  \begin{align} \label{eq:oneBreaksBefore}
  	\int_{\bar{\Omega}_{d, d} \cap \Omega_{d, c}(t)} \!\int_{\widehat{\tau}_{\Dx}(\eta)}^t \! \! \! \!\alpha \bar{V}_{\xi}(\eta)ds d\eta &= 2\alpha \int_{\bar{\Omega}_{d, d} \cap \Omega_{d, c}(t)}\big(U_{\xi}(t, \eta) - U_{\xi}( \widehat{\tau}_{\Dx}(\eta), \eta)\big)d\eta \nonumber
	\\ &\leq \! 2\alpha \! \int_{\bar{\Omega}_{d, d} \cap \Omega_{d, c}(t)} \! \! \!\big(\widehat{U}_{\Dx, \xi}(\widehat{\tau}_{\Dx}(\eta), \eta) - U_{\xi}(\widehat{\tau}_{\Dx}(\eta), \eta)\big)d\eta \nonumber
	\\ &\leq \!\alpha \! \int_{\bar{\Omega}_{d, d} \cap \Omega_{d, c}(t)} \int_0^{\widehat{\tau}_{\Dx}(\eta)} \! \! \!\big|\bar{V}_{\xi}(\eta) - \bar{V}_{\Dx, \xi}(f(\eta)) \big| ds d\eta \nonumber
	\\ & \qquad + 2\alpha \int_{\bar{\Omega}_{d, d} \cap \Omega_{d, c}(t)} \big|\bar{U}_{\xi}(\eta) - \bar{U}_{\Dx, \xi}(f(\eta))\big|d\eta.
\end{align} 
 Finally, wave breaking occurs for both $X$ and $\widehat{X}_{\Dx}$ on $\bar{\Omega}_{d, d} \cap \Omega_{c, c}(t)$, such that \eqref{eq:term23} can be decomposed into two parts; one where $X$ breaks before $\widehat{X}_{\Dx}$ and one part where the opposite holds. This enables us to proceed similarly to how we did for \eqref{eq:term21}, we therefore omit the details. After combining all the derived estimates, one ends up with 
 \begin{align*}
 	\int_0^t \int_{\mathcal{S}^c} \left|V_{\xi}(s, \eta) - \widehat{V}_{\Dx, \xi}(s, \eta) \right|d\eta ds &\leq (1+\alpha)\int_{\mathcal{S}^c} \int_0^t\big|\bar{V}_{\xi}(\eta) - \bar{V}_{\Dx, \xi}(f(\eta)) \big| ds d\eta
	\\ & \qquad +  \int_{\mathcal{S}^c} \int_0^t \big| g(\bar{X}) - g(\bar{X}_{\Dx}(f))\big|ds d\eta
	\\ & \qquad + \int_{\mathcal{S}^c \cap D(t)} \big|\bar{U}_{\xi}(\eta) - \bar{U}_{\Dx, \xi}(f(\eta)) \big|d\eta,  
 \end{align*}
 where 
 \begin{equation*}
 	D(t) := \{\xi: 0 < \tau(\xi) \leq t \} \bigcup \{\xi: 0 < \widehat{\tau}_{\Dx}(\xi) \leq t \}. 
\end{equation*}
Moreover, \cite[Cor. 2.4]{AlphaHS} ensures that $|\{\xi: 0 < \tau(\xi) \leq t \}| \leq (1+\frac{1}{4}t^2)\bar{F}_{\infty}$, and thanks to \eqref{eq:transformId} and \eqref{eq:explicitExp}, it is possible to prove that $|\{\xi: 0 < \widehat{\tau}_{\Dx}(\xi) \leq t \}| \leq (1+\frac{1}{4}t^2)\bar{F}_{\infty}$ by mimicking the proof of  \cite[Cor. 2.4]{AlphaHS}. As a consequence, by applying the CS-inequality we obtain  
 \begin{align}
 \begin{aligned} \label{eq:absolutelyContrib}
   	\int_0^t \int_{\mathcal{S}^c} |V_{\xi}(s, \eta) &- \widehat{V}_{\Dx, \xi}(s, f(\eta))|d\eta ds
        \\ & \leq (1 + \alpha)t \|\bar{V}_{\xi}- \bar{V}_{\Dx, \xi}(f)\|_{L^1(\mathcal{S}^c)} \\ & \qquad + t \|g(\bar{X}) - g(\bar{X}_{\Dx})(f) \|_{L^1(\mathcal{S}^c)} 
    \\ & \qquad + 2\alpha \sqrt{2 \Big(1 + \frac{1}{4}t^2 \Big)\bar{F}_{\infty}} \|\bar{U}_{\xi}- \bar{U}_{\Dx, \xi}(f) \|_{L^2(\mathcal{S}^c)}. \raisetag{-65pt} 
 \end{aligned}
 \end{align}
At long last, it only remains to estimate \eqref{eq:twoLast}. The idea is to decompose the integration domain based on the sets $\mathcal{S}$ and $\mathcal{S}_{\Dx}$ from \eqref{eq:S} and \eqref{eq:num_S}, respectively. In this regard, recall the notation from \eqref{eq:S3j} and set
\begin{equation}\label{eq:complementS3j}
	\mathcal{S}^c_{3N} := \mathcal{S}^c \cap [\xi_{3N}, \xi_{3N+3}) \hspace{0.35cm} \text{and} \hspace{0.35cm} \mathcal{S}^c_{\Dx, 3N} := \mathcal{S}_{\Dx}^c \cap [\xi_{3N}, \xi_{3N+3}).
\end{equation}
 Combining \eqref{eq:evolBreak} with the fact that $V_{\Dx, \xi}(t, \xi)\! =\! 1$ for all $t \geq 0$ and $\xi \in \mathcal{S}_{\Dx}$ gives \vspace{-0.075cm}
    \begin{subequations}
\begin{align}
\int_0^t \! \! \int_{\xi_{3N}}^{\xi_{3N+3}}\big|V_{\xi}(s, \eta) - V_{\Dx, \xi}(s, \eta)\big|&d\eta ds \leq \int_0^t\int_{\mathcal{S}^c \cap \mathcal{S}_{\Dx, 3N}} \big|V_{\xi}(s, \eta) - 1\big|d\eta ds \label{eq:energy1}\\
	&  \quad + \int_0^t\int_{\mathcal{S}_{3N} \cap \mathcal{S}_{\Dx}^c }\big|1 - V_{\Dx, \xi}(s, \eta) \big|d\eta ds \label{eq:energy2}\\
 & \quad + \int_0^t\int_{\mathcal{S}_{3N}^c \cap \mathcal{S}_{\Dx, 3N}^c}\big|V_{\xi}(s, \eta) - V_{\Dx, \xi}(s, \eta) \big|d\eta ds.\label{eq:energy3}
 \end{align}
 \end{subequations}
Let us start by estimating \eqref{eq:energy1}. To this end, observe that the difference $V_{\xi}(t, \xi) - \bar{V}_{\xi}(\xi)$ becomes nonzero once wave breaking occurs, cf. \eqref{eq:LagrSystem3}, more precisely 
\begin{align*}
	V_{\xi}(t, \xi) - \bar{V}_{\xi}(\xi) = \begin{cases}
	- \alpha \bar{V}_{\xi}(\xi), & \xi \in \Omega_d(\bar{X}) \cap \Omega_c(X(t)), \\
	0, & \text{otherwise}. \end{cases}
\end{align*}
Furthermore, \eqref{eq:MapL} implies $\bar{V}_{\xi}(\xi) -1 = -\bar{y}_{\xi}(\xi)$. Hence, by Fubini's theorem,  
\begin{align}
	\int_0^t \int_{\mathcal{S}^c \cap \mathcal{S}_{\Dx, 3N}} &\big|V_{\xi}(s, \eta) - 1 \big| d\eta ds \nonumber 
	\\ & \hspace{-0.8cm}\leq \int_{\mathcal{S}^c \cap \mathcal{S}_{\Dx, 3N}\cap (B(t))^c}\int_0^t\bar{y}_{\xi}(\eta)dsd\eta  + \int_{\mathcal{S}^c \cap \mathcal{S}_{\Dx, 3N}\cap B(t)}\int_0^{\tau(\eta)}\bar{y}_{\xi}(\eta)dsd\eta \nonumber
	\\ & \hspace{-0.8cm}\qquad + \int_{\mathcal{S}^c \cap \mathcal{S}_{\Dx, 3N}\cap B(t)}\int_{\tau(\eta)}^t\left(\bar{y}_{\xi}(\eta) + \alpha \bar{V}_{\xi}(\eta)\right)ds d\eta, \nonumber
	\\ & \hspace{-0.8cm} \leq \int_{\mathcal{S}^c \cap \mathcal{S}_{\Dx, 3N}} \int_0^t \bar{y}_{\xi}(\eta)d\eta  + \int_{\mathcal{S}^c \cap \mathcal{S}_{\Dx, 3N}\cap B(t)}\int_{0}^t \alpha \bar{V}_{\xi}(\eta)ds d\eta \label{eq:energy11}
\end{align}
where $B(t) = \Omega_d(\bar{X}) \cap \Omega_c(X(t))$. Since $\bar{y}(\xi_{3j}) = x_{2j}$ for any $j \in \mathbb{Z}$, we have, after a change of variables,
\begin{align}\label{eq:energy1_res1}
	\int_{\mathcal{S}^c \cap \mathcal{S}_{\Dx, 3N}} \int_0^t \bar{y}_{\xi}(\eta)d\eta &\leq t\int_{[\xi_{3N}\!, \hspace{0.05cm}\xi_{3N+3})}\bar{y}_{\xi}(\eta) d\eta = 2t\Dx.
\end{align}
Moreover, \eqref{eq:MapL} and \eqref{eq:rep_y} imply $\bar{V}_{\xi}(\xi) = \bar{u}_x^2(\bar{y}(\xi))\bar{y}_{\xi}(\xi)$ for a.e. $\xi \in \mathcal{S}^c$, which in turn leads to the following estimate for the second term in \eqref{eq:energy11}, 
\begin{align}\label{eq:energy1_res2}
	\int_{\mathcal{S}^c \cap \mathcal{S}_{\Dx, 3N} \cap B(t)}\int_{\tau(\eta)}^t  \alpha \bar{V}_{\xi}(\eta) dsd\eta &\leq t\int_{\mathcal{S}_{3N}^c} \bar{u}_x^2(\bar{y}(\eta))\bar{y}_{\xi}(\eta)d\eta \nonumber 
	\\ &\leq t\int_{x_{2N}}^{x_{2N+2}}\bar{u}_x^2(x)dx \nonumber
	\\ &= t \left(\int_{-\infty}^{x_{2N+2}}\bar{u}_x^2(x)dx - \int_{-\infty}^{x_{2N}}\bar{u}_x^2(x)dx \right) \nonumber
	\\ & \leq t\|\delta_{2\Dx}\bar{u}_x^2 \|_1. 
 \end{align}
One can proceed in the same way when deriving an upper bound on \eqref{eq:energy2}, because by Definition~\ref{def:ProjOP} we have $y_{\Dx}(\xi_{3j}) = x_{2j}$ for all $j \in \mathbb{Z}$, and, in addition, 
\begin{equation}\label{eq:identityFac}
	\int_{-\infty}^{x_{2j}}\bar{u}_x^2(z)dz = \bar{F}_{\mathrm{ac}}(x_{2j}) = \bar{F}_{\Dx, \mathrm{ac}}(x_{2j}) = \int_{-\infty}^{x_{2j}}\bar{u}_{\Dx, x}^2(z)dz \hspace{0.35cm} \text{for all} j \in \mathbb{Z}. 
\end{equation}

Finally, we are left to consider \eqref{eq:energy3}. By \eqref{eq:LagrSystem} one has $V_{\xi}(s, \xi)\! \leq \bar{u}_x^2(\bar{y}(\xi))\bar{y}_{\xi}(\xi)$ for a.e. $\xi\! \in\! \mathcal{S}^c$ and  $V_{\Dx, \xi}(s, \xi)\! \leq \bar{u}_{\Dx, x}^2(\bar{y}_{\Dx}(\xi))\bar{y}_{\Dx, \xi}(\xi)$ for a.e. $\xi \in \mathcal{S}_{\Dx}^c$. Therefore, by combining \eqref{eq:identityFac} with the argument leading to \eqref{eq:energy1_res2}, one obtains
\begin{align*}
	 \int_0^t\int_{\mathcal{S}_{3N}^c \cap \mathcal{S}_{\Dx, 3N}^c} \! \!\! \!\big|V_{\xi}(s, \eta) - V_{\Dx, \xi}(s, \eta) \big|d\eta ds & \leq t \int_{\mathcal{S}_{3N}^c } \bar{u}_x^2(\bar{y}(\eta))\bar{y}_{\xi}(\eta)d\eta
	 \\ & \qquad + t \int_{\mathcal{S}_{\Dx, 3N}^c } \bar{u}_{\Dx, x}^2(\bar{y}_{\Dx}(\eta))\bar{y}_{\Dx, \xi}(\eta)d\eta
	 \\ & \leq 2t \|\delta_{2\Dx}\bar{u}_{x}^2\|_1. 
\end{align*}
 Combining this with \eqref{eq:energy1_res1}--\eqref{eq:energy1_res2} and similar estimates for \eqref{eq:energy2} gives 
 \begin{equation}\label{eq:energy_result}
 	\int_0^t\int_{\xi_{3N}}^{\xi_{3N+3}} \big|V_{\xi}(s, \eta) - V_{\Dx, \xi}(s, \eta) \big|d\eta ds \leq 4 t\Dx + 4t\|\delta_{2\Dx}\bar{u}_x^2 \|_1.
 \end{equation}
 At last, after combining \eqref{eq:absolutelyContrib} and \eqref{eq:energy_result} with Lemma~\ref{lem:convInt} and Proposition~\ref{prop:simpleEst}, the asserted inequality, \eqref{eq:bound_U}, follows.
 \end{proof}

\subsection{Convergence rate for $\alpha$-dissipative solutions}\label{sec:ratesLagr}
In order to derive the sought rate of convergence for $\{u_{\Dx}(t)\}_{\Dx > 0}$, it suffices by \eqref{eq:est_u}, Lemma~\ref{lem:convInt}, Proposition~\ref{prop:simpleEst}, and Theorem~\ref{thm:Discrepancy} to derive convergence rates for $\|\bar{V}_{\xi} - \bar{V}_{\Dx, \xi}(f)\|_{L^1(\mathcal{S}^c)}$, $\|g(\bar{X})- g(\bar{X}_{\Dx})(f)\|_{L^1(\mathcal{S}^c)}$, and $\|\bar{U}_{\xi} - \bar{U}_{\Dx, \xi}(f) \|_{L^2(\mathcal{S}^c)}$. The identities \eqref{eq:rep_y} and \eqref{eq:iden_yxif} together with Lemma~\ref{lem:Rateux} are the crux of this matter.

\begin{lemma}\label{lem:rateHxi}
	Given $(\bar{u}, \bar{\mu}) \in \D$ with $\bar{u}_x \in B_2^{\beta}$ and $\bar{\mu}_{\mathrm{sc}}=0$, let $\bar{X}_{\Dx} = L \circ P_{\Dx}((\bar{u}, \bar{\mu}))$ and $\bar{X} = L ((\bar{u}, \bar{\mu}))$. Moreover, let $f$ denote the mapping from \eqref{eq:map_f}.  Then there exists a constant $C>0$ only depending on $\beta$, $\bar{F}_{\mathrm{ac}, \infty}$, and $|\bar{u}_x|_{2, \beta}$ such that  
\begin{align}\label{eq:rateHxi}
	\|\bar{V}_{\xi} - \bar{V}_{\Dx, \xi}(f) \|_{L^1(\mathcal{S}^c)} &\leq C\Dx^{\nicefrac{\beta}{2}}\!.
	\end{align}
\end{lemma}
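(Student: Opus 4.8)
The plan is to reduce the whole estimate to the Eulerian translate bounds already established and then invoke Lemma~\ref{lem:Rateux}. First I would record the algebraic reduction: since $\bar V(\xi)=\xi-\bar y(\xi)$ gives $\bar V_\xi=1-\bar y_\xi$ a.e., and Lemma~\ref{lem:groupAction}~(ii) (i.e. \eqref{eq:transformId}) gives $\widehat V_{\Dx,\xi}=1-\widehat y_{\Dx,\xi}$ a.e., the difference $\bar V_\xi-\bar V_{\Dx,\xi}(f)=\widehat V_{\Dx,\xi}-\bar V_\xi$ equals $\widehat y_{\Dx,\xi}-\bar y_\xi$ a.e.\ on $\mathcal S^c$, so it suffices to bound $\|\bar y_\xi-\widehat y_{\Dx,\xi}\|_{L^1(\mathcal S^c)}$. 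On $\mathcal S^c$, \eqref{eq:rep_y} gives $\bar y_\xi=(1+\bar u_x^2(\bar y))^{-1}$; since by \eqref{eq:map_f} the map $f$ sends $\mathcal S^c$ into $\mathcal S_{\Dx}^c$ up to a null set, \eqref{eq:rep_y} applied at $f(\xi)$ (equivalently \eqref{eq:iden_yxif} extended to $\mathcal S^c$) gives $\widehat y_{\Dx,\xi}=\bar y_{\Dx,\xi}(f)=(1+\bar u_{\Dx,x}^2(\bar y_{\Dx}(f)))^{-1}$ a.e.\ on $\mathcal S^c$. With $a=\bar u_x^2(\bar y)$ and $b=\bar u_{\Dx,x}^2(\bar y_{\Dx}(f))$, the elementary identity $\bigl|\tfrac1{1+a}-\tfrac1{1+b}\bigr|=\tfrac{|a-b|}{(1+a)(1+b)}$ yields the pointwise bound
\[
|\bar y_\xi-\widehat y_{\Dx,\xi}|\le \bigl|\bar u_{\Dx,x}^2(\bar y_{\Dx}(f))-\bar u_x^2(\bar y)\bigr|\,\bar y_\xi\,\widehat y_{\Dx,\xi}\qquad\text{a.e.\ on }\mathcal S^c.
\]

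Next I would split $\bigl|\bar u_{\Dx,x}^2(\bar y_{\Dx}(f))-\bar u_x^2(\bar y)\bigr|\le\bigl|\bar u_{\Dx,x}^2(\bar y_{\Dx}(f))-\bar u_x^2(\bar y_{\Dx}(f))\bigr|+\bigl|\bar u_x^2(\bar y_{\Dx}(f))-\bar u_x^2(\bar y)\bigr|$ and call $J_1$, $J_2$ the two resulting integrals over $\mathcal S^c$. For $J_1$ the argument is clean: bound $\bar y_\xi\le 1$ and change variables $x=\bar y_{\Dx}(f(\xi))$, with $dx=\widehat y_{\Dx,\xi}\,d\xi$ and image all of $\mathbb R$ up to a null set (because $f(\mathcal S^c)=\mathcal S_{\Dx}^c$ modulo a null set and $\bar y_{\Dx}$ is injective there); this reduces $J_1$ to $\|u_x^2-u_{\Dx,x}^2\|_{L^1}\le 2F_{\mathrm{ac},\infty}^{1/2}\|u_x-u_{\Dx,x}\|_2\le 2F_{\mathrm{ac},\infty}^{1/2}D\Dx^{\beta/2}$ by Cauchy--Schwarz ($\|u_x\|_2=\|u_{\Dx,x}\|_2=F_{\mathrm{ac},\infty}^{1/2}$) and Lemma~\ref{lem:Rateux}.

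The core is $J_2$. Here the structural input is that by \eqref{eq:map_f}, $f$ maps each Lagrangian cell $[\xi_{3j},\xi_{3j+3})$ bijectively onto itself, and both $\bar y$ and $\bar y_{\Dx}\circ f$ map $\mathcal S^c\cap[\xi_{3j},\xi_{3j+3})$ onto the Eulerian cell $[x_{2j},x_{2j+2}]$ up to a null set (using $\bar y(\xi_{3j})=\bar y_{\Dx}(\xi_{3j})=x_{2j}$ via \eqref{eq:MapL}); hence $|\bar y(\xi)-\bar y_{\Dx}(f(\xi))|\le 2\Dx$ with both arguments in the same cell. Bounding $\widehat y_{\Dx,\xi}\le 1$ and changing variables $x=\bar y(\xi)$ turns $J_2$ into $\sum_j\int_{x_{2j}}^{x_{2j+2}}|u_x^2(\phi(x))-u_x^2(x)|\,dx$, where $\phi=\bar y_{\Dx}\circ f\circ \bar y^{-1}$ fixes each cell and $\|\phi-\mathrm{id}\|_\infty\le 2\Dx$. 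Inserting the cell slope $Du_{2j}=\tfrac1{2\Dx}\int_{x_{2j}}^{x_{2j+2}}u_x$ and writing $|u_x^2(\phi(x))-u_x^2(x)|\le\tfrac1{2\Dx}\int_{x_{2j}}^{x_{2j+2}}\bigl(|u_x^2(\phi(x))-u_x^2(z)|+|u_x^2(z)-u_x^2(x)|\bigr)dz$, the second contribution is handled exactly as in the proof of Lemma~\ref{lem:Rateux} (Tonelli together with \eqref{eq:L1_normLip}, giving $O(\Dx^\beta)$, as in \eqref{eq:con_ux3_2}), and the first is treated by the same device after changing variables within each cell by $\phi$ — here one must exploit that $\phi$ preserves each cell (so translates stay of size $\le 2\Dx$) and the cancellation $\int_{x_{2j}}^{x_{2j+2}}(1-\phi'(x))\,dx=0$, rather than bounding the Jacobian crudely — again producing $O(\Dx^\beta)$. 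Collecting $J_1,J_2$ and using $\Dx\le 1$ (so $\Dx^\beta\le\Dx^{\beta/2}$) gives $\|\bar V_\xi-\bar V_{\Dx,\xi}(f)\|_{L^1(\mathcal S^c)}\le C\Dx^{\beta/2}$ with $C$ depending only on $\beta$, $F_{\mathrm{ac},\infty}$ and $|u_x|_{2,\beta}$, the rate $\beta/2$ being inherited from Lemma~\ref{lem:Rateux}.

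I expect the main obstacle to be precisely the estimate of $J_2$: the change of variables $x\leftrightarrow\bar y_{\Dx}(f(\xi))$ carries the Jacobian $\bar y_{\Dx,\xi}(f)=(1+\bar u_{\Dx,x}^2(\bar y_{\Dx}(f)))^{-1}$ (or its reciprocal), which is not uniformly bounded, so one must carefully keep the factors $\bar y_\xi,\widehat y_{\Dx,\xi}\in[0,1]$ in the right positions and use the cell-by-cell cancellation above in order not to lose the rate. This is also the step where the hypothesis $\bar\mu_{\mathrm{sc}}=0$ is essential, as it is what makes the rearrangement $f$ in \eqref{eq:map_f} well defined and measurable with $f(\mathcal S)=\mathcal S_{\Dx}$ modulo a null set.
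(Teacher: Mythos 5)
Your overall reduction (from $\|V_\xi-V_{\Dx,\xi}(f)\|_{L^1(\mathcal S^c)}$ to $\int_{\mathcal S^c}|u_x^2(y)-u_{\Dx,x}^2(y_{\Dx}(f))|\,y_\xi\,y_{\Dx,\xi}(f)\,d\xi$, via $\bar V_\xi=1-\bar y_\xi$, \eqref{eq:id1}, \eqref{eq:rep_y} and \eqref{eq:iden_yxif}) is exactly the paper's, and your handling of $J_1$ is also fine. The divergence is in the choice of intermediate term, and that choice is where your argument breaks. You insert $u_x^2(\bar y_{\Dx}(f))$ and reduce $J_2$, after the substitution $x=y(\xi)$ and the bound $y_{\Dx,\xi}(f)\le 1$, to $\sum_j\int_{x_{2j}}^{x_{2j+2}}|u_x^2(\phi(x))-u_x^2(x)|\,dx$ with $\phi=\bar y_{\Dx}\circ f\circ\bar y^{-1}$. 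The paper instead inserts $u_{\Dx,x}^2(y)$; the corresponding second integral, $\int_{\mathcal S^c}|u_{\Dx,x}^2(y)-u_{\Dx,x}^2(y_{\Dx}(f))|\,y_\xi\,y_{\Dx,\xi}(f)\,d\xi$, then exploits that $u_{\Dx,x}$ is piecewise constant with only two values per cell, so the integrand vanishes off the sets $B_{3j}$ from \eqref{eq:set_B3j} and on $B_{3j}$ equals the explicitly computable quantity $4|Du_{2j}|q_{2j}$, after which a cell-wise Cauchy--Schwarz with \eqref{eq:estEnergyDiff} closes the estimate.

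Your treatment of $J_2$ is not justified. The map $\phi$ preserves each cell and has $\|\phi-\mathrm{id}\|_\infty\le 2\Dx$, but its Jacobian $\phi'=(1+u_x^2\circ\bar y)/(1+u_{\Dx,x}^2\circ\bar y_{\Dx}\circ f)$ is unbounded, so the change of variables in the cell average $\frac{1}{2\Dx}\int_{x_{2j}}^{x_{2j+2}}|u_x^2(\phi(x))-u_x^2(z)|\,dz$ cannot be performed with bounded constants; and the scalar cancellation $\int_{x_{2j}}^{x_{2j+2}}(1-\phi')=0$ does not control $\int|g\circ\phi-g|$ for a non-measure-preserving $\phi$. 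Indeed, for a general cell-preserving self-map with unbounded derivative, $\int_{x_{2j}}^{x_{2j+2}}|u_x^2(\phi)-u_x^2|\,dx$ need not be small even when $u_x\in B_2^\beta$; the $B_2^\beta$ modulus controls translations, not arbitrary rearrangements. The ingredient you are missing is precisely the paper's observation that it is the \emph{numerical} derivative $u_{\Dx,x}$ that should be evaluated at both $\bar y$ and $\bar y_{\Dx}(f)$, because its two-value-per-cell structure localises the mismatch to $B_{3j}$ and turns the oscillation into $4|Du_{2j}|q_{2j}=O(q_{2j})$, the quantity already controlled in Lemma~\ref{lem:Rateux}.
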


\begin{proof}
To simplify notation, let us suppress the argument $\xi$. Note that $\bar{V}_{\xi} \! = 1 - \bar{y}_{\xi}$ and $\bar{V}_{\Dx, \xi}(f) = 1 - \bar{y}_{\Dx, \xi}(f)$ a.e. by \eqref{eq:MapL} and \eqref{eq:id1}, respectively. Hence, due \eqref{eq:rep_y} and Lemma~\ref{lem:groupAction},  we have
\begin{subequations}
\begin{align}
		\|\bar{V}_{\xi} - \bar{V}_{\Dx, \xi}(f) \|_{L^1(\mathcal{S}^c)} 
		&= \int_{\mathcal{S}^c} \bigg| \frac{\bar{u}_x^2 (\bar{y})}{1 + \bar{u}_x^2(\bar{y})} - \frac{\bar{u}_{\Dx, x}^2(\bar{y}_{\Dx}(f))}{1 + \bar{u}_{\Dx, x}^2(\bar{y}_{\Dx}(f))} \bigg|d\xi  \nonumber 
		\\ &= \int_{\mathcal{S}^c} \left|\bar{u}_x^2 (\bar{y}) - \bar{u}_{\Dx, x}^2(\bar{y}_{\Dx}(f)) \right|\bar{y}_{\xi} \bar{y}_{\Dx, \xi}(f)d\xi \nonumber
		\\ & \leq \int_{\mathcal{S}^c} \big|\bar{u}_x^2(\bar{y}) - \bar{u}_{\Dx, x}^2(\bar{y}) \big|\bar{y}_{\xi}\bar{y}_{\Dx, \xi}(f)d\xi\ \label{eq:est_Hxi1}
		\\ & \quad + \int_{\mathcal{S}^c} \big|\bar{u}_{\Dx, x}^2(\bar{y}) - \bar{u}_{\Dx, x}^2(\bar{y}_{\Dx}(f)) \big|\bar{y}_{\xi} \bar{y}_{\Dx, \xi}(f)d\xi  \label{eq:est_Hxi2}. 
	\end{align}
	\end{subequations}
After using the change of variables $x=\bar{y}(\xi)$, that $0 \leq \bar{y}_{\Dx, \xi}(f(\xi)) \leq 1$ for a.e. $\xi$, and the CS inequality, we end up with the following estimate for \eqref{eq:est_Hxi1} 
\begin{align}\label{eq:CSArg}
	\int_{\mathcal{S}^c} \! \! \left| \left(\bar{u}_x^2 - \bar{u}_{\Dx, x}^2 \right)\! (\bar{y})\right|\bar{y}_{\xi} \bar{y}_{\Dx, \xi}(f)d\xi &\leq \int_{\mathcal{S}^c} \! \!\big|\bar{u}_x(\bar{y}) + \bar{u}_{\Dx, x}(\bar{y}) \big| \big|\bar{u}_x(\bar{y}) - \bar{u}_{\Dx, x}(\bar{y}) \big|\bar{y}_{\xi}d\xi \nonumber \\ 
	&\leq 2\bar{F}_{\mathrm{ac}, \infty}^{\nicefrac{1}{2}} \|\bar{u}_x- \bar{u}_{\Dx, x}\|_2. 
\end{align}

On the other hand, Definition~\ref{def:ProjOP} implies that
\begin{align*}
	\int_{\mathcal{S}^c}\! \bigl|\bar{u}_{\Dx, x}^2(\bar{y}) - \bar{u}_{\Dx, x}^2 &(\bar{y}_{\Dx}(f))  \bigr|\bar{y}_{\xi} \bar{y}_{\Dx, \xi}(f)d\xi \nonumber 
\\ & \leq \sum_{j \in \mathbb{Z}} \int_{B_{3j}}\left|\bar{u}_{\Dx, x}^2(\bar{y}) - \bar{u}_{\Dx, x}^2 (\bar{y}_{\Dx}(f)) \right|\bar{y}_{\xi}d\xi, 
\end{align*}
where
\begin{align}
    B_{3j} :\!&= \bigl \{ \xi \in  [\xi_{3j}, \xi_{3j+3})\!: \bar{y}_{\Delta x}(f(\xi)) \leq x_{2j+1} < \bar{y}(\xi)  \nonumber
    \\ & \hspace{3.0cm} \text{ or } \bar{y}(\xi) \leq x_{2j+1} < \bar{y}_{\Delta x}(f(\xi)) \bigr\}.
    \label{eq:set_B3j}
\end{align}
In addition, recalling \eqref{eq:square_root} reveals that for a.e. $\xi \in B_{3j}$ we have
\begin{equation*}
      \left |\bar{u}_{\Delta x, x}^2 (\bar{y}) - \bar{u}_{\Delta x, x}^2 (\bar{y}_{\Dx}(f)) \right|\! = 4 \left|D\bar{u}_{2j}\right| q_{2j}= 4 \left|D\bar{u}_{2j}\right| \!\sqrt{D\bar{F}_{\mathrm{ac}, 2j} - \left(D\bar{u}_{2j} \right)^2 },  
\end{equation*}
and thus, by the CS inequality, we obtain the following upper bound on \eqref{eq:est_Hxi2}, 
\begin{align*}
	\int_{\mathcal{S}^c} \bigl|\bar{u}_{\Dx, x}^2&(\bar{y}) - \bar{u}_{\Dx, x}^2(\bar{y}_{\Dx}(f)) \bigr| \bar{y}_{\xi}\bar{y}_{\Dx, \xi}(f)d\xi \nonumber 
    \\ & \leq 4\bigg(\sum_{j \in \mathbb{Z}} \int_{B_{3j}} \! \! |D\bar{u}_{2j}|^2\bar{y}_{\xi}d\xi \bigg)^{\nicefrac{1}{2}} \!
    \bigg(\sum_{j \in \mathbb{Z}} \int_{B_{3j}} \! \!|D\bar{F}_{\text{ac}, 2j} - (D\bar{u}_{2j})^2|\bar{y}_{\xi}d\xi \bigg)^{\nicefrac{1}{2}}\!\!. 
\end{align*}

Since $\bar{y}(B_{3j}) \subseteq [x_{2j}, x_{2j+2}]$, another application of the CS inequality leads to
\begin{align*}
    \bigg(\sum_{j \in \mathbb{Z}} \int_{B_{3j}}\! |D\bar{u}_{2j}|^2\bar{y}_{\xi}d\xi \bigg)^{\nicefrac{1}{2}} \!&\leq \bigg(\sum_{j \in \mathbb{Z}} \int_{B_{3j}} \frac{1}{2\Dx}\int_{x_{2j}}^{x_{2j+2}}\bar{u}_x^2(z)dz \bar{y}_{\xi}d\xi\bigg)^{\nicefrac{1}{2}} 
    \\ &\leq \bigg( \sum_{j \in \mathbb{Z}}\frac{\bar{F}_{\mathrm{ac}, 2j+2} - \bar{F}_{\mathrm{ac}, 2j}}{2\Dx}\int_{B_{3j}}\bar{y}_{\xi}d\xi \bigg)^{\nicefrac{1}{2}}\! \!\leq \bar{F}_{\mathrm{ac}, \infty}^{\nicefrac{1}{2}}, 
\end{align*}
and, after combining \eqref{eq:con_ux3} with \eqref{eq:ux_est2_1}--\eqref{eq:ux_est2_2_1}, one obtains
\begin{align}\label{eq:estEnergyDiff}
	 \sum_{j \in \mathbb{Z}} \int_{B_{3j}} \!\!|D\bar{F}_{\mathrm{ac}, 2j} - (D\bar{u}_{2j})^2|\bar{y}_{\xi}d\xi &\leq 2^{1+\beta}\bar{F}_{\mathrm{ac}, \infty}^{\nicefrac{1}{2}}\! \left( \frac{2}{\beta+1} + \sqrt{\frac{2}{2\beta +1}}\right)\! |\bar{u}_x|_{2, \beta}\Dx^{\beta}\!.
\end{align}
Combining this together with \eqref{eq:CSArg} yields 
\begin{align*}
	\|\bar{V}_{\xi} - \bar{V}_{\Dx, \xi}(f)\|_{L^1(\mathcal{S}^c)} &\leq 2\bar{F}_{\mathrm{ac}, \infty}^{\nicefrac{1}{2}} \|\bar{u}_x - \bar{u}_{\Dx, x}  \|_2 \nonumber
	\\ & \quad \hspace{-0.2cm} + 2^{\nicefrac{(5+\beta)}{2}}\bar{F}_{\mathrm{ac}, \infty}^{\nicefrac{3}{4}}\!\! \left(\frac{2}{\beta+1} + \sqrt{\frac{2}{2\beta + 1}}\right)^{\nicefrac{1}{2}}\! \!|\bar{u}_x|_{2, \beta}^{\nicefrac{1}{2}}\Dx^{\nicefrac{\beta}{2}}\!, 
\end{align*}
and by subsequently using the bound in Lemma~\ref{lem:Rateux} one gets \eqref{eq:rateHxi}.
\end{proof}

\begin{lemma}\label{lem:rateg}
	Suppose $(\bar{u}, \bar{\mu}) \in \D$ satisfy $\bar{u}_x \in B_2^{\beta}$ and $\bar{\mu}_{\mathrm{sc}}=0$. Moreover, recall the function $f$ from \eqref{eq:map_f} and let $\bar{X}_{\Dx} = L \circ P_{\Dx}((\bar{u}, \bar{\mu}))$ and $\bar{X} = L ((\bar{u}, \bar{\mu}))$. Then 
	\begin{align*}
	\|g(\bar{X}) - g(\bar{X}_{\Dx})(f)\|_{L^1(\mathcal{S}^c)}\! &\leq\! \tilde{D}\Dx^{\nicefrac{\beta}{2}}, 
\end{align*}
where $\tilde{D}>0$ is a constant only dependent on $\beta$, $\bar{F}_{\mathrm{ac}, \infty}$ and $|\bar{u}_x|_{2, \beta}$. Recall that the latter quantity is defined by \eqref{eq:semiNorm}. 
\end{lemma}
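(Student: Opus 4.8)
The plan is to mirror the structure of the proof of Lemma~\ref{lem:rateHxi}, exploiting the fact that $g(X)$ and $V_\xi$ coincide on $\Omega_c$ and differ only by the factor $(1-\alpha)$ on $\Omega_d$. First I would recall the definition \eqref{eq:function_g} and write, for a.e.\ $\xi \in \mathcal{S}^c$,
\begin{equation*}
	g(X)(\xi) - g(X_{\Dx})(f(\xi)) = \begin{cases} V_\xi(\xi) - V_{\Dx,\xi}(f(\xi)), & \xi \in \Omega_c(X) \cap \Omega_c(\widehat X_{\Dx}), \\ (1-\alpha)\big(V_\xi(\xi) - V_{\Dx,\xi}(f(\xi))\big), & \xi \in \Omega_d(X) \cap \Omega_d(\widehat X_{\Dx}), \\ V_\xi(\xi) - (1-\alpha) V_{\Dx,\xi}(f(\xi)), & \xi \in \Omega_c(X) \cap \Omega_d(\widehat X_{\Dx}), \\ (1-\alpha) V_\xi(\xi) - V_{\Dx,\xi}(f(\xi)), & \xi \in \Omega_d(X) \cap \Omega_c(\widehat X_{\Dx}), \end{cases}
\end{equation*}
where $\widehat X_{\Dx,\xi} = \bar X_{\Dx,\xi}\circ f$ and the sign of $U_{\Dx,\xi}(f(\xi))$ determines the membership in $\Omega_c$ or $\Omega_d$. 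On the two "diagonal" sets the integrand is bounded by $|V_\xi - V_{\Dx,\xi}(f)|$, so those contributions are already controlled by Lemma~\ref{lem:rateHxi}. On the two "mixed" sets one writes, e.g., $V_\xi - (1-\alpha)V_{\Dx,\xi}(f) = (V_\xi - V_{\Dx,\xi}(f)) + \alpha V_{\Dx,\xi}(f)$, so it remains to bound $\alpha \int |V_{\Dx,\xi}(f)|$ (resp.\ $\alpha\int |V_\xi|$) over these mixed sets.

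The crux is therefore to show that the mixed sets $\Omega_c(X)\cap\Omega_d(\widehat X_{\Dx})$ and $\Omega_d(X)\cap\Omega_c(\widehat X_{\Dx})$, intersected with $\mathcal{S}^c$, carry at most $\mathcal{O}(\Dx^{\beta/2})$ of energy. The key observation is that on these sets $\bar u_x(y(\xi))$ and $\bar u_{\Dx,x}(y_{\Dx}(f(\xi)))$ must have opposite signs (one nonnegative, one negative), and in particular $|\bar u_x(y(\xi)) - \bar u_{\Dx,x}(y_{\Dx}(f(\xi)))| \geq \max\{|\bar u_x(y(\xi))|, |\bar u_{\Dx,x}(y_{\Dx}(f(\xi)))|\}$. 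Using \eqref{eq:rep_y} and \eqref{eq:iden_yxif} we have $V_\xi = \bar u_x^2(y)\,\bar y_\xi$ a.e.\ on $\mathcal S^c$ and $\widehat V_{\Dx,\xi} = \bar u_{\Dx,x}^2(\bar y_{\Dx}(f))\,\widehat y_{\Dx,\xi}$ a.e.\ on $\mathcal S^c$; so on the mixed set where $\widehat V_{\Dx,\xi}$ appears we can estimate
\begin{equation*}
	\int_{\mathcal{S}^c\cap \Omega_c(X)\cap \Omega_d(\widehat X_{\Dx})} \! \!\widehat V_{\Dx,\xi}\,d\xi = \int_{\mathcal{S}^c\cap \Omega_c(X)\cap \Omega_d(\widehat X_{\Dx})} \! \!\bar u_{\Dx,x}^2(\bar y_{\Dx}(f))\,\widehat y_{\Dx,\xi}\,d\xi \leq \int_{\mathcal{S}^c} \big|\bar u_{\Dx,x}(\bar y_{\Dx}(f)) - \bar u_x(y)\big|^2 \widehat y_{\Dx,\xi}\,d\xi,
\end{equation*}
and similarly on the other mixed set with $\int_{\mathcal{S}^c} |\bar u_x(y) - \bar u_{\Dx,x}(\bar y_{\Dx}(f))|^2 \bar y_\xi\,d\xi$. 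Via the change of variables $x = y(\xi)$ (resp.\ involving $f$ and $\bar y_{\Dx}$), together with $0 \le \bar y_\xi, \widehat y_{\Dx,\xi} \le 1$, these reduce to controlling $\|\bar u_x(y) - \bar u_{\Dx,x}(\bar y_{\Dx}(f))\|_{L^2(\mathcal S^c)}^2$, which splits as in Lemma~\ref{lem:rateHxi} into $\|\bar u_x - \bar u_{\Dx,x}\|_2^2 \le D^2\Dx^\beta$ from Lemma~\ref{lem:Rateux} plus a $B_{3j}$-type term $\sum_j \int_{B_{3j}}|u_{\Dx,x}^2(y) - u_{\Dx,x}^2(y_{\Dx}(f))|\,(\cdots)$ bounded by $\mathcal O(\Dx^\beta)$ using \eqref{eq:square_root} and \eqref{eq:estEnergyDiff}. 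Taking square roots gives the $\Dx^{\beta/2}$ rate, which matches (up to constants) the rate from Lemma~\ref{lem:rateHxi}.

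Assembling the pieces: the diagonal contributions are $\le \|V_\xi - V_{\Dx,\xi}(f)\|_{L^1(\mathcal S^c)} \le C\Dx^{\beta/2}$ by Lemma~\ref{lem:rateHxi}, and each mixed contribution is bounded by $\|V_\xi - V_{\Dx,\xi}(f)\|_{L^1(\mathcal S^c)}$ plus $\alpha$ times one of the energy integrals just estimated, hence also $\mathcal O(\Dx^{\beta/2})$; summing the four pieces and collecting the $\beta$-, $F_{\mathrm{ac},\infty}$- and $|u_x|_{2,\beta}$-dependent constants into a single $\tilde D$ yields the claim. The main obstacle I anticipate is the bookkeeping around the mixed sets — specifically, making rigorous the sign-opposition argument that lets one dominate $\widehat V_{\Dx,\xi}$ (or $V_\xi$) there by the squared difference of the derivatives, since one must be careful that \eqref{eq:rep_y} and \eqref{eq:iden_yxif} hold only almost everywhere on $\mathcal S^c$ and that the relevant change of variables respects the mixed set; everything else is a routine repetition of the Cauchy--Schwarz manipulations already carried out in the proofs of Lemma~\ref{lem:Rateux} and Lemma~\ref{lem:rateHxi}.
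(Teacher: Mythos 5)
Your overall strategy matches the paper's: decompose $\mathcal{S}^c$ via the sets $\bar{\Omega}_{m,n}$, absorb the diagonal contributions into $\|V_{\xi}-V_{\Dx,\xi}(f)\|_{L^1(\mathcal{S}^c)}$ (Lemma~\ref{lem:rateHxi}), and handle the mixed sets via the opposite-sign observation for $u_x(y)$ and $u_{\Dx,x}(y_{\Dx}(f))$, ultimately reducing to Lemma~\ref{lem:Rateux} and the $B_{3j}$-estimate \eqref{eq:estEnergyDiff}. That is the right skeleton. However, there is a concrete technical misstep in the way you split the mixed-set integrand.

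On $\bar{\Omega}_{c,d}$ you write $V_{\xi} - (1-\alpha)V_{\Dx,\xi}(f) = (V_{\xi}-V_{\Dx,\xi}(f)) + \alpha V_{\Dx,\xi}(f)$, leaving the remainder $\alpha \widehat{V}_{\Dx,\xi} = \alpha\, u_{\Dx,x}^2(y_{\Dx}(f))\,\widehat{y}_{\Dx,\xi}$ with weight $\widehat{y}_{\Dx,\xi}$. After bounding $u_{\Dx,x}^2(y_{\Dx}(f))$ by $\bigl(u_x(y)-u_{\Dx,x}(y_{\Dx}(f))\bigr)^2$ and splitting, you are left with $\int_{\mathcal{S}^c}\bigl(u_x-u_{\Dx,x}\bigr)^2(y)\,\widehat{y}_{\Dx,\xi}\,d\xi$. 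This does \emph{not} reduce to $\|u_x-u_{\Dx,x}\|_2^2$ by the change of variables $x=y(\xi)$, because the Jacobian there is $y_{\xi}$, not $\widehat{y}_{\Dx,\xi}$. Nor can one simply drop $\widehat{y}_{\Dx,\xi}\le 1$: by \eqref{eq:rep_y}, $\int_{\mathcal{S}^c}h(y(\xi))\,d\xi=\int h(x)\bigl(1+u_x^2(x)\bigr)\,dx$, so $\int (u_x-u_{\Dx,x})^2(y)\,d\xi$ involves $u_x^4$, which need not be integrable (and indeed is not for the cusp data of Example~\ref{ex:Cusp}). So your reduction to $\|\bar{u}_x(y)-\bar{u}_{\Dx,x}(y_{\Dx}(f))\|_{L^2(\mathcal{S}^c)}$ is not legitimate as stated.

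The fix is precisely what the paper does: on \emph{both} mixed sets, use the rewriting $V_{\xi}-(1-\alpha)V_{\Dx,\xi}(f)=(1-\alpha)\bigl(V_{\xi}-V_{\Dx,\xi}(f)\bigr)+\alpha V_{\xi}$ (and its analogue on $\bar{\Omega}_{d,c}$), so that the remainder is always $\alpha V_{\xi}=\alpha\,u_x^2(y)\,y_{\xi}$, with the weight $y_{\xi}$ aligned to the substitution $x=y(\xi)$. With that weight, your subsequent manipulations (opposite-sign bound, splitting, Lemma~\ref{lem:Rateux}, \eqref{eq:estEnergyDiff}) go through and give the stated $\mathcal{O}(\Dx^{\nicefrac{\beta}{2}})$ rate. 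The paper's proof bounds $\alpha u_x^2(y)\le \alpha\,u_x(y)\bigl(u_x(y)-u_{\Dx,x}(y_{\Dx}(f))\bigr)$ and then applies Cauchy--Schwarz rather than squaring as you do, but that is a harmless variation once the weight is chosen correctly.
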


\begin{proof}
Combining the very definition of $g$, i.e.,  \eqref{eq:function_g} together with the sets from \eqref{eq:setNotation} yields
\begin{align*}
	\|g(\bar{X}) &- g(\bar{X}_{\Dx})(f) \|_{L^1(\mathcal{S}^c)} \nonumber 
	\\ & \leq \int_{\bar{\Omega}_{c, c} \cup \bar{\Omega}_{d, d}}\big|\bar{V}_{\xi} - \bar{V}_{\Dx, \xi}(f)\big|d\xi +   \int_{\bar{\Omega}_{d, c}}\big|(1-\alpha)\bar{V}_{\xi} - \bar{V}_{\Dx, \xi}(f) \big|d\xi \nonumber
	\\ &\qquad \quad +\int_{\bar{\Omega}_{c, d}}\!\left|\bar{V}_{\xi} - (1-\alpha)\bar{V}_{\Dx, \xi}(f)\right|d\xi \nonumber
		\\ & \leq \int_{\mathcal{S}^c}\! \left|\bar{V}_{\xi} - \bar{V}_{\Dx, \xi}(f) \right|d\xi 
		 + \alpha \int_{\bar{\Omega}_{c, d}}\bar{u}_x^2(\bar{y})\bar{y}_{\xi}d\xi 
		 +  \alpha \int_{\bar{\Omega}_{d, c}}\bar{u}_{x}^2(\bar{y})\bar{y}_{\xi}d\xi, 
\end{align*}
because $\bar{V}_{\xi}(\xi)=\bar{u}_x^2(\bar{y}(\xi))\bar{y}_{\xi}(\xi)$ for a.e. $\xi \in \mathcal{S}^c$. 

Lemma~\ref{lem:rateHxi} provides an upper bound on the first term. It therefore remains to examine the two other terms. Since their structures are similar, we only pay attention to the second term. The key observation is that  $\bar{u}_x(\bar{y}(\xi))$ and $\bar{u}_{\Dx, x}(\bar{y}_{\Dx}(f(\xi)))$ have opposite signs for a.e. $\xi \in \bar{\Omega}_{c, d} \cup \bar{\Omega}_{d, c}$, hence 
\begin{subequations}
\begin{align}
	\alpha  \int_{\bar{\Omega}_{c, d}}\bar{u}_x^2(\bar{y})\bar{y}_{\xi}d\xi &\leq \alpha  \int_{\bar{\Omega}_{c, d}}\bar{u}_x(\bar{y}) \big(\bar{u}_x(\bar{y}) - \bar{u}_{\Dx, x}(\bar{y}_{\Dx}(f)) \big)\bar{y}_{\xi}d\xi \nonumber
	\\ &\leq  \int_{\mathcal{S}^c}\bar{u}_x(\bar{y})\big(\bar{u}_x - \bar{u}_{\Dx, x}\big)(\bar{y})\bar{y}_{\xi}d\xi \label{eq:mixed1}
	\\ & \qquad + \int_{\mathcal{S}^c}\bar{u}_x(\bar{y})\big(\bar{u}_{\Dx, x}(\bar{y}) - \bar{u}_{\Dx, x}(\bar{y}_{\Dx}(f))\big)\bar{y}_{\xi}d\xi \label{eq:mixed2}.
\end{align}
\end{subequations}
The term \eqref{eq:mixed1} resembles \eqref{eq:CSArg} and can be estimated similarly, in particular, 
\begin{equation*}
	 \int_{\mathcal{S}^c}\bar{u}_x(\bar{y})\big(\bar{u}_x - \bar{u}_{\Dx, x}\big)(\bar{y})\bar{y}_{\xi}d\xi \leq \bar{F}_{\mathrm{ac}, \infty}^{\nicefrac{1}{2}}\|\bar{u}_x - \bar{u}_{\Dx, x}\|_2.
\end{equation*}
On the other hand, the integrand in \eqref{eq:mixed2} vanishes a.e. outside the sets from \eqref{eq:set_B3j}.  More precisely, by Definition~\ref{def:ProjOP},  
\begin{equation*}
	|\bar{u}_{\Dx, x}(\bar{y}(\xi)) - \bar{u}_{\Dx, x}(\bar{y}_{\Dx}(f(\xi)))| = 2\sqrt{D\bar{F}_{\mathrm{ac}, 2j} - (D\bar{u}_{2j})^2} \hspace{0.1cm}  \text{a.e.} \hspace{0.05cm} \xi \!\in B_{3j},
\end{equation*}
which in turn, when combined with the CS inequality and \eqref{eq:estEnergyDiff} yields 
\begin{align}\label{eq:lastg}
	\int_{\mathcal{S}^c}\bar{u}_x(y)\big(\bar{u}_{\Dx, x}(\bar{y}) &- \bar{u}_{\Dx, x}(\bar{y}_{\Dx}(f))\big)\bar{y}_{\xi}d\xi \nonumber
	\\ &\leq 2 \Big( \int_{\R}\bar{u}_x^2(\bar{y})\bar{y}_{\xi}d\xi \Big)^{\nicefrac{1}{2}} \Big( \sum_{j \in \mathbb{Z}}\int_{B_{3j}} \big|D\bar{F}_{\mathrm{ac}, 2j} - (D\bar{u}_{2j})^2 \big|\bar{y}_{\xi}d\xi \Big)^{\nicefrac{1}{2}} \nonumber
	\\ &\leq 2^{\nicefrac{(3+\beta)}{2}} \bar{F}_{\mathrm{ac}, \infty}^{\nicefrac{3}{4}}\! \! \left(\frac{2}{\beta+1} + \sqrt{\frac{2}{2\beta +1}}\right)^{\nicefrac{1}{2}}\! |\bar{u}_x|_{2, \beta}^{\nicefrac{1}{2}} \Dx^{\nicefrac{\beta}{2}}.
\end{align}
To summarize, we have shown that 
\begin{align*}
	\|g(\bar{X}) - g(\bar{X}_{\Dx})(f)\|_{L^1(\mathcal{S}^c)} &\leq \|\bar{V}_{\xi} - \bar{V}_{\Dx, \xi}(f)\|_{L^1(\mathcal{S}^c)} + 2\bar{F}_{\mathrm{ac}, \infty}^{\nicefrac{1}{2}} \|\bar{u}_x - \bar{u}_{\Dx, x}\|_2 \nonumber
	\\ & \quad \hspace{-0.3cm}+ 2^{\nicefrac{(5+\beta)}{2}}\bar{F}_{\mathrm{ac}, \infty}^{\nicefrac{3}{4}}\!\! \left(\frac{2}{\beta+1} + \sqrt{\frac{2}{2\beta +1}}\right)^{\nicefrac{1}{2}}\!|\bar{u}_x|_{2, \beta}^{\nicefrac{1}{2}} \Dx^{\nicefrac{\beta}{2}}\!, 
\end{align*}
and the asserted statement therefore follows by Lemma~\ref{lem:Rateux} and Lemma~\ref{lem:rateHxi}.  
\end{proof}

\begin{lemma}\label{lem:rateUxi}
	Given $(\bar{u}, \bar{\mu}) \!\in \! \D$ with $\bar{u}_x \! \in \!B_2^{\beta}$ and $\bar{\mu}_{\mathrm{sc}}=0$, let $\bar{X}_{\Dx} = L \circ P_{\Dx}((\bar{u}, \bar{\mu}))$ and $\bar{X} = L ((\bar{u}, \bar{\mu}))$. Moreover, let $f$ denote the mapping from \eqref{eq:map_f}.  Then there exists $\hat{D}>0$ which only depends on $\beta$, $\bar{F}_{\mathrm{ac}, \infty}$ and $|\bar{u}_x|_{2, \beta}$ such that
	\begin{align*}
		\|\bar{U}_{\xi} - \bar{U}_{\Dx, \xi}(f)\|_{L^2(\mathcal{S}^c)} &\leq \hat{D}\Dx^{\nicefrac{\beta}{4}}.	\end{align*}
\end{lemma}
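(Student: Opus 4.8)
The plan is to compare $U_\xi$ with $\widehat U_{\Dx,\xi}:=U_{\Dx,\xi}\circ f=\bar U_{\Dx,\xi}\circ f$ pointwise on $\mathcal S^c$ via the two quadratic identities they satisfy. On $\mathcal S^c$ one has, by \eqref{eq:MapL}, \eqref{eq:rep_y} and Definition~\ref{def:LagSet}, that $y_\xi+V_\xi=1$, $V_\xi=u_x^2(y)y_\xi$, $U_\xi^2=y_\xi V_\xi$, and $\operatorname{sgn}U_\xi=\operatorname{sgn}u_x(y)$; in particular $U_\xi=\operatorname{sgn}u_x(y)\,\sqrt{y_\xi(1-y_\xi)}$ a.e. Since the construction of $f$ forces $f(\mathcal S^c)\subseteq\mathcal S_{\Dx}^c$, the relation \eqref{eq:rep_y} may be applied at $f(\xi)$, so by Lemma~\ref{lem:groupAction} and \eqref{eq:id1} we likewise have $\widehat y_{\Dx,\xi}+\widehat V_{\Dx,\xi}=1$, $\widehat V_{\Dx,\xi}=u_{\Dx,x}^2(y_{\Dx}(f))\,\widehat y_{\Dx,\xi}$, $\widehat y_{\Dx,\xi}\widehat V_{\Dx,\xi}=\widehat U_{\Dx,\xi}^2$, and $\operatorname{sgn}\widehat U_{\Dx,\xi}=\operatorname{sgn}u_{\Dx,x}(y_{\Dx}(f))$ a.e. on $\mathcal S^c$. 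Recalling \eqref{eq:setNotation}, I would split $\mathcal S^c=\Sigma_{=}\cup\Sigma_{\neq}$ with $\Sigma_{=}:=\bar\Omega_{c,c}\cup\bar\Omega_{d,d}$, on which $U_\xi$ and $\widehat U_{\Dx,\xi}$ carry the same sign, and $\Sigma_{\neq}:=\bar\Omega_{c,d}\cup\bar\Omega_{d,c}$, on which they carry opposite signs (as noted in the proof of Lemma~\ref{lem:rateg}), and estimate the two pieces separately.

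On $\Sigma_{=}$ the sign agreement gives $|U_\xi-\widehat U_{\Dx,\xi}|=\bigl|\sqrt{U_\xi^2}-\sqrt{\widehat U_{\Dx,\xi}^2}\bigr|\le|U_\xi^2-\widehat U_{\Dx,\xi}^2|^{1/2}$, and because $U_\xi^2-\widehat U_{\Dx,\xi}^2=y_\xi(1-y_\xi)-\widehat y_{\Dx,\xi}(1-\widehat y_{\Dx,\xi})=(y_\xi-\widehat y_{\Dx,\xi})(1-y_\xi-\widehat y_{\Dx,\xi})$ with $|1-y_\xi-\widehat y_{\Dx,\xi}|\le1$ and $y_\xi-\widehat y_{\Dx,\xi}=\widehat V_{\Dx,\xi}-V_\xi$, one gets $|U_\xi-\widehat U_{\Dx,\xi}|\le|V_\xi-V_{\Dx,\xi}(f)|^{1/2}$ a.e. on $\Sigma_{=}$. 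Squaring, integrating, and invoking Lemma~\ref{lem:rateHxi} yields $\|U_\xi-U_{\Dx,\xi}(f)\|_{L^2(\Sigma_{=})}^2\le\|V_\xi-V_{\Dx,\xi}(f)\|_{L^1(\mathcal S^c)}\le C\Dx^{\nicefrac\beta2}$, which is already of the claimed order; the square root used here is precisely what degrades the exponent from $\nicefrac\beta2$ to $\nicefrac\beta4$ in the statement.

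On $\Sigma_{\neq}$ one has $|U_\xi-\widehat U_{\Dx,\xi}|=|U_\xi|+|\widehat U_{\Dx,\xi}|$, so it suffices to bound $\int_{\Sigma_{\neq}}U_\xi^2\,d\xi$ and $\int_{\Sigma_{\neq}}\widehat U_{\Dx,\xi}^2\,d\xi$. The opposite-sign property forces $|u_x(y)|\le|u_x(y)-u_{\Dx,x}(y_{\Dx}(f))|$ a.e. on $\Sigma_{\neq}$, so $U_\xi^2=u_x^2(y)y_\xi^2\le\bigl(u_x(y)-u_{\Dx,x}(y_{\Dx}(f))\bigr)^2y_\xi$; splitting $\bigl(u_x(y)-u_{\Dx,x}(y_{\Dx}(f))\bigr)^2\le2\bigl(u_x(y)-u_{\Dx,x}(y)\bigr)^2+2\bigl(u_{\Dx,x}(y)-u_{\Dx,x}(y_{\Dx}(f))\bigr)^2$, changing variables $x=y(\xi)$ and using Lemma~\ref{lem:Rateux} for the first summand, and observing (as in the proofs of Lemmas~\ref{lem:rateHxi} and \ref{lem:rateg}) that the second summand is supported on the sets $B_{3j}$ of \eqref{eq:set_B3j} where by Definition~\ref{def:ProjOP} it equals $4q_{2j}^2$, so that \eqref{eq:estEnergyDiff} applies, one gets $\int_{\Sigma_{\neq}}U_\xi^2\,d\xi\le\int_{\Sigma_{\neq}}u_x^2(y)y_\xi\,d\xi\le C\Dx^{\beta}$. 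For the remaining integral, $\widehat U_{\Dx,\xi}^2=\widehat y_{\Dx,\xi}\widehat V_{\Dx,\xi}\le\widehat V_{\Dx,\xi}$ since $\widehat y_{\Dx,\xi}=1-\widehat V_{\Dx,\xi}\le1$, and on $\mathcal S^c$ one writes $\widehat V_{\Dx,\xi}=V_\xi-\bigl(V_\xi-\widehat V_{\Dx,\xi}\bigr)\le u_x^2(y)y_\xi+|V_\xi-V_{\Dx,\xi}(f)|$, whence $\int_{\Sigma_{\neq}}\widehat U_{\Dx,\xi}^2\,d\xi\le\int_{\Sigma_{\neq}}u_x^2(y)y_\xi\,d\xi+\|V_\xi-V_{\Dx,\xi}(f)\|_{L^1(\mathcal S^c)}\le C\Dx^{\beta}+C\Dx^{\nicefrac\beta2}$ by the previous bound and Lemma~\ref{lem:rateHxi}. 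Hence $\|U_\xi-U_{\Dx,\xi}(f)\|_{L^2(\Sigma_{\neq})}^2\le C\Dx^{\nicefrac\beta2}$.

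Adding the $\Sigma_{=}$ and $\Sigma_{\neq}$ contributions gives $\|U_\xi-U_{\Dx,\xi}(f)\|_{L^2(\mathcal S^c)}^2\le\hat D^2\Dx^{\nicefrac\beta2}$, i.e. the asserted bound, with $\hat D$ depending only on $\beta$, $F_{\mathrm{ac},\infty}$ and $|u_x|_{2,\beta}$ through Lemmas~\ref{lem:Rateux}, \ref{lem:rateHxi} and \eqref{eq:estEnergyDiff}. I expect the opposite-sign set $\Sigma_{\neq}$ to be the main obstacle: there the natural weight attached to $\widehat U_{\Dx,\xi}$ is $y_{\Dx,\xi}(f)$, which is mismatched with the evaluation point $y(\xi)$ and so cannot be absorbed by a change of variables; the device of writing $\widehat V_{\Dx,\xi}=V_\xi-\bigl(V_\xi-V_{\Dx,\xi}(f)\bigr)$ so that Lemma~\ref{lem:rateHxi} handles the discrepancy is what sidesteps this. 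A secondary nuisance is the bookkeeping of the bad sets $B_{3j}$, the value $2q_{2j}$ attained there, and the a.e. validity of the change of variables $x=y(\xi)$ on $\mathcal S^c$, all exactly as in the proofs of Lemmas~\ref{lem:rateHxi} and \ref{lem:rateg}.
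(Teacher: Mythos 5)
Your proposal is correct, and it takes a genuinely different route from the paper's. The paper writes $U_\xi - U_{\Dx,\xi}(f)$ as the telescoping sum
$(u_x - u_{\Dx,x})(y)\,y_\xi + \bigl(u_{\Dx,x}(y) - u_{\Dx,x}(y_{\Dx}(f))\bigr)y_\xi + u_{\Dx,x}(y_{\Dx}(f))\bigl(y_\xi - y_{\Dx,\xi}(f)\bigr)$,
bounds the first two terms directly in $L^2$ (each of order $\Dx^{\nicefrac\beta2}$), and then squares the third, splits it into two integrals, and reduces each to $\|V_\xi - V_{\Dx,\xi}(f)\|_{L^1}$-type quantities — it is the squaring of that third term that halves the exponent. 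You instead partition $\mathcal S^c$ according to sign agreement between $U_\xi$ and $U_{\Dx,\xi}(f)$, reusing the $\bar\Omega_{m,n}$ partition already introduced for Theorem~\ref{thm:Discrepancy} and exploited in Lemma~\ref{lem:rateg}. On the same-sign set you use the purely algebraic facts $U_\xi^2 - \widehat U_{\Dx,\xi}^2 = (y_\xi - \widehat y_{\Dx,\xi})(1 - y_\xi - \widehat y_{\Dx,\xi})$ together with $|\sqrt s - \sqrt t| \le \sqrt{|s-t|}$, reducing the whole estimate to Lemma~\ref{lem:rateHxi} in one stroke; on the opposite-sign set you observe that $|u_x(y)| \le |u_x(y) - u_{\Dx,x}(y_{\Dx}(f))|$ and reduce to Lemma~\ref{lem:Rateux} and \eqref{eq:estEnergyDiff} for $\int U_\xi^2$, then write $\widehat V_{\Dx,\xi} \le V_\xi + |V_\xi - V_{\Dx,\xi}(f)|$ for $\int \widehat U_{\Dx,\xi}^2$. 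Your derivation isolates the exponent loss cleanly in the single inequality $|\sqrt s-\sqrt t|\le\sqrt{|s-t|}$, which is more transparent than the paper's bookkeeping, and it makes the role of the sign partition (already present in Lemma~\ref{lem:rateg}) explicit; the paper's version is more mechanical but has the virtue of not needing the sign discussion, feeding the three pieces into existing estimates with no new ideas beyond the split of the third term. Both arguments depend on the same ingredients (Lemmas~\ref{lem:Rateux}, \ref{lem:groupAction}, \ref{lem:rateHxi}, identity \eqref{eq:rep_y}, and the bound \eqref{eq:estEnergyDiff}) and both land on $\mathcal O(\Dx^{\nicefrac\beta4})$, so neither improves the rate.
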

 
\begin{proof}
	To begin with, observe that for a.e. $\xi \in \mathcal{S}^c$ we have
	\begin{equation*}
		\bar{U}_{\xi}(\xi) = \bar{u}_x (\bar{y}(\xi))\bar{y}_{\xi}(\xi) \hspace{0.3cm} \text{and} \hspace{0.3cm} \bar{U}_{\Dx, \xi}(f(\xi)) = \bar{u}_{\Dx, x}(\bar{y}_{\Dx}(f(\xi))\bar{y}_{\Dx, \xi}(f(\xi)), 
	\end{equation*}
	which implies  
\begin{subequations}\label{eq:decompUxi}
\begin{align}
     \|\bar{U}_{\xi} - \bar{U}_{\Delta x, \xi}(f)\|_{L^2(\mathcal{S}^c)} &= \|\bar{u}_x(\bar{y}) \bar{y}_{\xi} - \bar{u}_{\Dx, x}(\bar{y}_{\Dx}(f)) \bar{y}_{\Dx, \xi}(f)\|_{L^2(\mathcal{S}^c)} \nonumber 
     \\ & \leq \|\!\left(\bar{u}_x - \bar{u}_{\Dx, x} \right)(\bar{y})\bar{y}_{\xi}\|_{L^2(\mathcal{S}^c)}  \label{eq:Uxi1}
     \\ & \quad + \|\!\left(\bar{u}_{\Dx, x}(\bar{y}) - \bar{u}_{\Dx, x}(\bar{y}_{\Dx}(f)) \right)\bar{y}_{\xi}\|_{L^2(\mathcal{S}^c)}\label{eq:Uxi2}
     \\ &\quad + \|\bar{u}_{\Dx, x} (\bar{y}_{\Dx}(f)) \left(\bar{y}_{\xi} - \bar{y}_{\Dx, \xi}(f)\right)\|_{L^2(\mathcal{S}^c)}\label{eq:Uxi3}.
     \end{align}
 \end{subequations}
 
To estimate \eqref{eq:Uxi1}, we use the substitution $x=\bar{y}({\xi})$ and the fact that $0 \leq \bar{y}_{\xi}\leq 1$ a.e., which leads to 
 \begin{equation*} 
 	\|\!\left(\bar{u}_x - \bar{u}_{\Dx, x}\right)\!(\bar{y})\bar{y}_{\xi}\|_{L^2(\mathcal{S}^c)} \leq \|\bar{u}_x - \bar{u}_{\Dx, x}\|_2,
\end{equation*}
while by arguing similarly to how we did for in \eqref{eq:lastg}, we end up with the following estimate for \eqref{eq:Uxi2}
\begin{align*}
	 \|(\bar{u}_{\Dx, x}(\bar{y}) - \bar{u}_{\Dx, x}(&\bar{y}_{\Dx}(f)))\bar{y}_{\xi}\|_{L^2(\mathcal{S}^c)} 
	 \\ &\leq \! 2^{\nicefrac{(3+\beta)}{2}}\bar{F}_{\mathrm{ac}, \infty}^{\nicefrac{1}{4}}\! \! \left(\frac{2}{\beta+1} + \sqrt{\frac{2}{2\beta +1}}\right)^{\nicefrac{1}{2}}\! \! \!|\bar{u}_x|_{2, \beta}^{\nicefrac{1}{2}} \Dx^{\nicefrac{\beta}{2}}\!. 
\end{align*}
 The term \eqref{eq:Uxi3} on the other hand, requires a bit more work. We start by splitting it in the following way
 \begin{subequations}
 \begin{align}
 	\|\bar{u}_{\Dx, x} (\bar{y}_{\Dx}(f)) &\left(\bar{y}_{\xi} - \bar{y}_{\Dx, \xi}(f) \right)\|_{L^2(\mathcal{S}^c)}^2 \nonumber 
	\\ &=  \int_{\mathcal{S}^c} \bar{u}_{\Dx, x}^2(\bar{y}_{\Dx}(f))\bar{y}_{\Dx, \xi}(f)(\bar{y}_{\Dx, \xi}(f) - \bar{y}_{\xi})d\xi\label{eq:Uxi31}
	\\ & \qquad + \int_{\mathcal{S}^c} \bar{u}_{\Dx, x}^2(\bar{y}_{\Dx}(f))\bar{y}_{\xi}\left(\bar{y}_{\xi} - \bar{y}_{\Dx, \xi}(f)\right)d\xi. \label{eq:Uxi32}
	\end{align}
	\end{subequations}
	Moreover, we observe by  \eqref{eq:MapL}, Lemma~\ref{lem:groupAction},  and \eqref{eq:transformId}  that $\bar{u}_{\Dx, x}^2(\bar{y}_{\Dx}(f(\xi)))\bar{y}_{\Dx, \xi}(f(\xi)) = \bar{V}_{\Dx, \xi}(f(\xi)) \leq 1$ and  $\bar{y}_{\Dx, \xi}(f)-\bar{y}_{\xi} =\bar{V}_{\xi} - \bar{V}_{\Dx, \xi}(f)$ for a.e. $\xi \in \mathcal{S}^c$ such that
\begin{equation*}
		\int_{\mathcal{S}^c} \bar{u}_{\Dx, x}^2(\bar{y}_{\Dx}(f))\bar{y}_{\Dx, \xi}(f)(\bar{y}_{\Dx, \xi}(f) - \bar{y}_{\xi})d\xi \leq \|\bar{V}_{\xi} - \bar{V}_{\Dx, \xi}(f)\|_{L^1(\mathcal{S}^c)}.  
\end{equation*}
It therefore remains to estimate \eqref{eq:Uxi32}, which can be decomposed into the more familiar terms 
\begin{subequations}
	\begin{align}
		\int_{\mathcal{S}^c} \! \! \bar{u}_{\Dx, x}^2(\bar{y}_{\Dx}(f))\bar{y}_{\xi}\! \left(\bar{y}_{\xi} - \bar{y}_{\Dx, \xi}(f)\right)\!d\xi &\leq \int_{\mathcal{S}^c} \! \! \bar{u}_x^2(\bar{y})\bar{y}_{\xi}\! \left(\bar{y}_{\xi} - \bar{y}_{\Dx, \xi}(f)\right)\!d\xi \nonumber
		\\ & \hspace{-1.5cm} \!\!+  \int_{\mathcal{S}^c} \!\!\left(\bar{u}_{\Dx, x}^2 - \bar{u}_x^2\right)\!(\bar{y})\bar{y}_{\xi} \!\left(\bar{y}_{\xi} - \bar{y}_{\Dx, \xi}(f)\right)\!d\xi \label{eq:Uxi311}
		\\ & \hspace{-1.5cm} + \int_{\mathcal{S}^c}\! \! \left(\bar{u}_{\Dx, x}^2(\bar{y}_{\Dx}(f)) - \bar{u}_{\Dx, x}^2(\bar{y})\right)\bar{y}_{\xi}\!\left(\bar{y}_{\xi} - \bar{y}_{\Dx, \xi}(f)\right)\!d\xi. \label{eq:Uxi312}
	\end{align}
	\end{subequations}
For the first term one can proceed precisely as for \eqref{eq:Uxi31}, yielding
\begin{equation*}
	\int_{\mathcal{S}^c} \bar{u}_x^2(\bar{y})\bar{y}_{\xi} \big(\bar{y}_{\xi} - \bar{y}_{\Dx, \xi}(f)\big)d\xi \leq \|\bar{V}_{\xi} - \bar{V}_{\Dx, \xi}(f)\|_{L^1(\mathcal{S}^c)}, 
\end{equation*}
whilst for \eqref{eq:Uxi311} we can exploit that $|\bar{y}_{\xi}- \bar{y}_{\Dx, \xi}(f)| \leq 1$ for a.e. $\xi \in \mathcal{S}^c$ and argue as in \eqref{eq:est_Hxi1}, which results in   
 \begin{align*}
 	\int_{\mathcal{S}^c}\!\big(\bar{u}_{\Dx, x}^2 - \bar{u}_x^2\big)(\bar{y})\bar{y}_{\xi} \big(\bar{y}_{\xi} - \bar{y}_{\Dx, \xi}(f)\big)d\xi &\leq \int_{\mathcal{S}^c} \!\!\big|\bar{u}_{\Dx, x}^2 - \bar{u}_x^2\big|(\bar{y})\bar{y}_{\xi}d\xi 
	\\ &\leq 2 \bar{F}_{\mathrm{ac}, \infty}\|\bar{u} - \bar{u}_{\Dx, x}\|_2. 
\end{align*}

At last, \eqref{eq:Uxi312} can be bounded from above by following the argument for \eqref{eq:est_Hxi2},  that is
\begin{align*}
	\int_{\mathcal{S}^c}\! \big(\bar{u}_{\Dx, x}^2(\bar{y}_{\Dx}(f)) &- \bar{u}_{\Dx, x}^2(\bar{y})\big)\bar{y}_{\xi}\big(\bar{y}_{\xi} - \bar{y}_{\Dx, \xi}(f)\big)d\xi 
	\\ &\leq \int_{\mathcal{S}^c}\! \big|\bar{u}_{\Dx, x}^2(\bar{y}_{\Dx}(f)) - \bar{u}_{\Dx, x}^2(\bar{y})\big|\bar{y}_{\xi}d\xi
	\\ & \leq 2^{\nicefrac{(5+\beta)}{2}}\bar{F}_{\mathrm{ac}, \infty}^{\nicefrac{3}{4}}\!\! \left(\frac{2}{\beta+1} + \sqrt{\frac{2}{2\beta + 1}}\right)^{\nicefrac{1}{2}}\! \!|\bar{u}_x|_{2, \beta}^{\nicefrac{1}{2}}\Dx^{\nicefrac{\beta}{2}}. 
\end{align*}
Lemma~\ref{lem:rateUxi} is therefore a consequence of Lemma~\ref{lem:Rateux} and Lemma~\ref{lem:rateHxi}. 
\end{proof}

Finally, combining Lemmas~\ref{lem:rateHxi}--\ref{lem:rateUxi} together with the a priori estimate in Theorem~\ref{thm:Discrepancy} culminates in the following error estimate. 

\begin{theorem}\label{thm:ConvRateu}
	Given  $(\bar{u}, \bar{\mu}) \!\in \! \D$ with $\bar{u}_{x} \!\! \in \! B_2^{\beta}$, $\Dx \leq 1$, and $\bar{\mu}_{\mathrm{sc}}=0$, let $\bar{X}_{\Dx} = L \circ P_{\Dx} \left((\bar{u}, \bar{\mu})\right)$ and $\bar{X} = L \left((\bar{u}, \bar{\mu})\right)$. Moreover, for any $t \in (0, \infty)$ introduce $(y, U, V)(t) = S_t(\bar{X})$ and $(y_{\Dx}, U_{\Dx}, V_{\Dx})(t) = S_t(\bar{X}_{\Dx})$, then \vspace{-0.075cm}
		\begin{align}
		\|U(t) - U_{\Dx}(t)\|_{\infty} &\leq (1 + t)C_1 \Dx^{\nicefrac{\beta}{4}}  , \label{eq:rateU}\\
		\|y(t) - y_{\Dx}(t) \|_{\infty} &\leq (1 +t)tC_2 \Dx^{\nicefrac{\beta}{4}} + 2\Dx^{\nicefrac{\beta}{4}}, \label{eq:ratey}
	\end{align}
	where $C_1$ and $C_2$ only depend on $\beta$, $\bar{F}_{\infty}$, and $|\bar{u}_x|_{2, \beta}$. 
	Furthermore, let $(u, \mu)(t) = T_t((\bar{u}, \bar{\mu}))$ and $(u_{\Dx}, \mu_{\Dx})(t) = T_t \circ P_{\Dx} ((\bar{u}, \bar{\mu}))$, it then holds that  \vspace{-0.05cm}
	\begin{align}\label{eq:convRateu}
		\|u(t) - u_{\Dx}(t) \|_{\infty} &\leq D(1+\sqrt{t}+t)\Dx^{\nicefrac{\beta}{8}}, 
	\end{align}
	for a constant $D$ which only depends on $\beta$, $\bar{F}_{\infty}$, and $|\bar{u}_x|_{2, \beta}$. 
\end{theorem}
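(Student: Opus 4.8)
The plan is to assemble the pieces already in place: Theorem~\ref{thm:Discrepancy} reduces $\|U(t)-U_{\Dx}(t)\|_{\infty}$ to the three Lagrangian quantities $\|\bar{V}_{\xi}-\bar{V}_{\Dx,\xi}(f)\|_{L^1(\mathcal{S}^c)}$, $\|g(\bar{X})-g(\bar{X}_{\Dx})(f)\|_{L^1(\mathcal{S}^c)}$, and $\|\bar{U}_{\xi}-\bar{U}_{\Dx,\xi}(f)\|_{L^2(\mathcal{S}^c)}$, together with the explicit remainders $(1+2\sqrt{2})\bar{F}_{\mathrm{ac},\infty}^{\nicefrac12}\Dx^{\nicefrac12}$, $t\Dx$, and $t\|\delta_{2\Dx}\bar{u}_x^2\|_1$. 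First I would record that by \eqref{eq:L1_normLip} one has $\|\delta_{2\Dx}\bar{u}_x^2\|_1\le 2^{1+\beta}\bar{F}_{\mathrm{ac},\infty}^{\nicefrac12}|\bar{u}_x|_{2,\beta}\Dx^{\beta}$, so this remainder is $\mathcal{O}(t\Dx^{\beta})$. Since $\Dx\le 1$ and $\beta\in(0,1]$, each of $\Dx^{\nicefrac12}$, $\Dx^{\nicefrac{\beta}{2}}$, $\Dx^{\beta}$, $\Dx$ is bounded by $\Dx^{\nicefrac{\beta}{4}}$; hence all remainders, and the two $L^1$-terms (each $\mathcal{O}(\Dx^{\nicefrac{\beta}{2}})$ by Lemmas~\ref{lem:rateHxi} and \ref{lem:rateg}), are absorbed into $\mathcal{O}((1+t)\Dx^{\nicefrac{\beta}{4}})$. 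The slowest contribution is the one carrying $\|\bar{U}_{\xi}-\bar{U}_{\Dx,\xi}(f)\|_{L^2(\mathcal{S}^c)}$: by Lemma~\ref{lem:rateUxi} it is at most $\tfrac{\alpha}{2}\sqrt{2(1+\tfrac14 t^2)\bar{F}_{\infty}}\,\hat{D}\Dx^{\nicefrac{\beta}{4}}$, and using $\alpha\le 1$ and $\sqrt{1+\tfrac14 t^2}\le 1+\tfrac t2$ this yields a bound of the form $(1+t)C_1\Dx^{\nicefrac{\beta}{4}}$, which is \eqref{eq:rateU}, with $C_1$ depending only on $\beta$, $\bar{F}_{\infty}$, and $|\bar{u}_x|_{2,\beta}$ (recall $\bar{F}_{\mathrm{ac},\infty}\le\bar{F}_{\infty}$).

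For \eqref{eq:ratey} I would insert \eqref{eq:rateU} into the estimate \eqref{eq:esty} of Proposition~\ref{prop:simpleEst} and use $\|\bar{y}-\bar{y}_{\Dx}\|_{\infty}\le 2\Dx$ from Lemma~\ref{lem:convInt}, giving $\|y(t)-y_{\Dx}(t)\|_{\infty}\le 2\Dx+C_1\Dx^{\nicefrac{\beta}{4}}\int_0^t(1+s)\,ds=2\Dx+C_1\bigl(t+\tfrac{t^2}{2}\bigr)\Dx^{\nicefrac{\beta}{4}}$. Since $\Dx\le\Dx^{\nicefrac{\beta}{4}}$ and $t+\tfrac{t^2}{2}\le t(1+t)$, this is of the asserted shape $2\Dx^{\nicefrac{\beta}{4}}+(1+t)tC_2\Dx^{\nicefrac{\beta}{4}}$.

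Finally, \eqref{eq:convRateu} follows from the interpolation-type inequality \eqref{eq:est_u}. The first summand $\|U(t)-U_{\Dx}(t)\|_{\infty}$ is already $\mathcal{O}((1+t)\Dx^{\nicefrac{\beta}{4}})$, hence $\mathcal{O}((1+\sqrt{t}+t)\Dx^{\nicefrac{\beta}{8}})$ since $\Dx\le 1$. For the second summand I would take the square root of \eqref{eq:ratey}, use $\sqrt{a+b}\le\sqrt{a}+\sqrt{b}$ together with $\sqrt{(1+t)t}\le\sqrt{t}+t$, obtaining $\bar{F}_{\infty}^{\nicefrac12}\|y(t)-y_{\Dx}(t)\|_{\infty}^{\nicefrac12}\le\bar{F}_{\infty}^{\nicefrac12}\bigl(\sqrt{2}+\sqrt{C_2}(\sqrt{t}+t)\bigr)\Dx^{\nicefrac{\beta}{8}}$, which is $\mathcal{O}((1+\sqrt{t}+t)\Dx^{\nicefrac{\beta}{8}})$. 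Adding the two contributions yields \eqref{eq:convRateu} with $D$ depending only on $\beta$, $\bar{F}_{\infty}$, and $|\bar{u}_x|_{2,\beta}$.

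There is no genuinely hard step: the theorem is the synthesis of Theorem~\ref{thm:Discrepancy} and Lemmas~\ref{lem:rateHxi}--\ref{lem:rateUxi}. The only delicate point is the bookkeeping of exponents and of the $t$-dependence: the worst Lagrangian rate is $\Dx^{\nicefrac{\beta}{4}}$, coming from $\|\bar{U}_{\xi}-\bar{U}_{\Dx,\xi}(f)\|_{L^2(\mathcal{S}^c)}$ (which enters \eqref{eq:bound_U} only through the factor $\alpha$), and it is precisely passing this $y$-contribution through the square root in \eqref{eq:est_u} that produces the final exponent $\nicefrac{\beta}{8}$; one must also check that every remaining term in \eqref{eq:bound_U} carries an exponent at least $\nicefrac{\beta}{4}$, so that it is dominated for $\Dx\le 1$, and that after integration in time and taking the square root the growth in $t$ remains at the stated polynomial order $1+\sqrt{t}+t$.
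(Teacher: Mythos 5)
Your proof is correct and follows essentially the same route as the paper: plug Lemmas~\ref{lem:rateHxi}--\ref{lem:rateUxi} and \eqref{eq:L1_normLip} into Theorem~\ref{thm:Discrepancy}, integrate via Proposition~\ref{prop:simpleEst} and Lemma~\ref{lem:convInt} to get \eqref{eq:ratey}, then apply \eqref{eq:est_u}. The paper leaves the exponent and $t$-dependence bookkeeping implicit, while you carry it out explicitly (absorbing $\Dx^{\nicefrac12}, \Dx^{\nicefrac{\beta}{2}}, \Dx^{\beta}, \Dx$ into $\Dx^{\nicefrac{\beta}{4}}$, and using $\sqrt{1+\tfrac14 t^2}\le 1+\tfrac t2$, $\sqrt{t+t^2}\le\sqrt t+t$), which is a faithful and correct unpacking of the same argument.
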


\begin{proof} 
	From \eqref{eq:L1_normLip} and Theorem~\ref{thm:Discrepancy} we have  \vspace{-0.075cm}
 \begin{align*}
 	\| U(t) - U_{\Dx}(t)\|_{\infty} &\leq t\Dx + (1+2\sqrt{2})\bar{F}_{\mathrm{ac}, \infty}^{\nicefrac{1}{2}}\Dx^{\nicefrac{1}{2}} + 2^{1+\beta}|\bar{u}_x|_{2, \beta}\bar{F}_{\mathrm{ac}, \infty}^{\nicefrac{1}{2}}t\Dx^{\beta}
	\\ & \quad + \frac{1}{4}(1+\alpha)t\|\bar{V}_{\xi} - \bar{V}_{\Dx, \xi}(f)\|_{L^1(\mathcal{S}^c)}
	\\ & \quad + \frac{t}{4}\|g(\bar{X}) - g(\bar{X}_{\Dx})(f)\|_{L^1(\mathcal{S}^c)}
	\\ & \quad + \frac{\alpha}{2} \sqrt{2\Big(1 + \frac{1}{4}t^2\Big)\bar{F}_{\infty}}\|\bar{U}_{\xi} - \bar{U}_{\Dx, \xi}(f)\|_{L^2(\mathcal{S}^c)},
\end{align*}
and by inserting the bounds from Lemmas~\ref{lem:rateHxi}--\ref{lem:rateUxi},  we obtain \eqref{eq:rateU}. Combining this bound with Lemma~\ref{lem:convInt} and Proposition~\ref{prop:simpleEst} gives \eqref{eq:ratey}, which in turn, when combined with  \eqref{eq:est_u} and \eqref{eq:rateU}, immediately establishes \eqref{eq:convRateu}. 
 \end{proof}
 
 \begin{remark} \label{rem:explanation}
 	Mapping into and evolving in Lagrangian coordinates reduces the order $\mathcal{O}(\Dx^{\nicefrac{\beta}{2}})$ from Lemma~\ref{lem:Rateux} by a half, see Figure~\ref{fig:schematicRep}. The main reason for this reduction is the term $\|\bar{U}_{\xi} - \bar{U}_{\Dx, \xi}(f)\|_{L^2(\mathcal{S}^c)}$, which accounts for the energy discrepancy which arises when either $X$ or $X_{\Dx}$ experiences wave breaking before the other solution, cf. \eqref{eq:oneBreaksBefore}. 
	
Mapping back to Eulerian coordinates induces an additional reduction in the rate of convergence, because $u$ is in general only H{\"o}lder continuous with exponent $\nicefrac{1}{2}$, we therefore end up with $\|u(t) - u_{\Dx}(t)\|_{\infty} = \mathcal{O}(\Dx^{\nicefrac{\beta}{8}})$. 
  \end{remark}

\section{Convergence rate for $\alpha=0$}\label{sec:convConservative}
For conservative solutions there is no energy dissipation, and mapping to and evolving in Lagrangian coordinates does therefore not deteriorate the rate of convergence. On the other hand, by exploiting that $V(t, \xi)\!  = \! \bar{V}(\xi)$ and $V_{\Dx}(t, \xi)\! \!=  \!\bar{V}_{\Dx}(\xi)$ for all $t\!\geq \! 0$ and $\xi \in \R$, one achieves the following much stronger result. 

\begin{lemma}\label{lem:RateConservative}
	Let $\alpha=0$, $(\bar{u}, \bar{\mu}) \in \D$,  and introduce $(y, U, V)(t) = S_t \circ L \left((\bar{u}, \bar{\mu})\right)$ and $(y_{\Dx}, U_{\Dx}, V_{\Dx})(t) = S_t \circ L \circ P_{\Dx}  \left((\bar{u}, \bar{\mu})\right)$ for $t \in [0, \infty)$, then 
	\begin{subequations}
	\begin{align}
		\|U(t) - U_{\Dx}(t)\|_{\infty} &\leq (1 + 2\sqrt{2})\bar{F}_{\mathrm{ac}, \infty}^{\nicefrac{1}{2}}\Delta x^{\nicefrac{1}{2}} + t \Dx, \label{eq:consU}\\ 
		\|y(t) - y_{\Dx}(t)\|_{\infty} &\leq (1 + 2\sqrt{2} )\bar{F}_{\mathrm{ac}, \infty}^{\nicefrac{1}{2}}t \Delta x^{\nicefrac{1}{2}} + \big(2+\frac{1}{2}t^2\big)\Dx. \label{eq:consy}
	\end{align}
	\end{subequations} 
	Furthermore, let the exact and numerical conservative solution be denoted by $(u, \mu)(t) = T_t \left( (\bar{u}, \bar{\mu}) \right)$ and $(u_{\Dx}, \mu_{\Dx})(t) = T_t \circ P_{\Dx} \left( (\bar{u}, \bar{\mu}) \right)$, respectively, then
	\begin{align}\label{eq:consu}
		\|u(t) - u_{\Dx}(t)\|_{\infty} &\leq \sqrt{\big(1 + 2\sqrt{2}\big)} \bar{F}_{\infty}^{\nicefrac{3}{4}} t^{\nicefrac{1}{2}}\Dx^{\nicefrac{1}{4}} \nonumber
		\\ &\quad + \Big(\! 1 + 2\sqrt{2} + \sqrt{\big(2+\frac{1}{2}t^2\big)}\Big)\!\bar{F}_{\infty}^{\nicefrac{1}{2}}\Dx^{\nicefrac{1}{2}} + t\Dx.  
	\end{align}
\end{lemma}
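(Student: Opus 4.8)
The plan is to exploit the defining feature of the conservative regime: when $\alpha=0$ the indicator term in \eqref{eq:LagrSystem3} disappears, so the Lagrangian energy density is frozen, $V_\xi(t,\xi)=\bar V_\xi(\xi)$ for all $t\ge 0$, and likewise $V_{\Dx,\xi}(t,\xi)=\bar V_{\Dx,\xi}(\xi)$. I would first record this and substitute it into the pointwise bound \eqref{eq:estU} of Proposition~\ref{prop:simpleEst}. Because $\bar V_\xi$ is a time-independent exact derivative, $\int_{-\infty}^\xi(\bar V_\xi-\bar V_{\Dx,\xi})\,d\eta=\bar V(\xi)-\bar V_{\Dx}(\xi)$ (both Lagrangian energies vanish at $-\infty$, since $\bar V(\xi)=\xi-\bar y(\xi)$), while $\int_\xi^\infty(\bar V_\xi-\bar V_{\Dx,\xi})\,d\eta=-(\bar V(\xi)-\bar V_{\Dx}(\xi))$ because $P_{\Dx}$ preserves the total energy, so the right asymptotes of $\bar V$ and $\bar V_{\Dx}$ both equal $\bar F_\infty$. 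Hence the bracket in \eqref{eq:estU} equals $2(\bar V(\xi)-\bar V_{\Dx}(\xi))$, its time integral is $2t(\bar V(\xi)-\bar V_{\Dx}(\xi))$, and \eqref{eq:initial_approx_H} gives $\tfrac14\cdot 2t\,\|\bar V-\bar V_{\Dx}\|_\infty\le t\Dx$. Combined with \eqref{eq:initial_approx_U}, this yields \eqref{eq:consU}.

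Next I would feed \eqref{eq:consU} into the integral inequality \eqref{eq:esty}: using $\|\bar y-\bar y_{\Dx}\|_\infty\le 2\Dx$ from \eqref{eq:initial_approx_y} and integrating $(1+2\sqrt 2)\bar F_{\mathrm{ac},\infty}^{\nicefrac12}\Dx^{\nicefrac12}+s\Dx$ over $s\in[0,t]$ produces exactly the right-hand side of \eqref{eq:consy}, with the $\tfrac12 t^2\Dx$ coming from the $s\Dx$ term and the $2\Dx$ from the initial displacement.

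Finally, for the Eulerian estimate I would plug \eqref{eq:consU} and \eqref{eq:consy} into the transfer estimate \eqref{eq:est_u}, bound $\|y(t)-y_{\Dx}(t)\|_\infty^{\nicefrac12}$ via subadditivity of the square root, and absorb constants using $\bar F_{\mathrm{ac},\infty}\le\bar F_\infty$ (so that $\bar F_\infty^{\nicefrac12}\bar F_{\mathrm{ac},\infty}^{\nicefrac14}\le\bar F_\infty^{\nicefrac34}$ and $\bar F_{\mathrm{ac},\infty}^{\nicefrac12}\le\bar F_\infty^{\nicefrac12}$); collecting the $\Dx^{\nicefrac14}$, $\Dx^{\nicefrac12}$, and $\Dx$ contributions gives \eqref{eq:consu}.

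I do not anticipate a genuine obstacle: the argument is the earlier machinery specialised to $\alpha=0$, where the energy-discrepancy terms that forced the loss of powers in Theorem~\ref{thm:ConvRateu} are simply absent. The one point requiring care is the bookkeeping of the boundary values of $\bar V$ and $\bar V_{\Dx}$ at $\pm\infty$: one must use both that $\bar V(\xi)=\xi-\bar y(\xi)\to 0$ as $\xi\to-\infty$ and that $P_{\Dx}$ conserves the total mass $\bar F_\infty$, so that the two half-line integrals in \eqref{eq:estU} are genuinely $\pm(\bar V(\xi)-\bar V_{\Dx}(\xi))$ rather than merely $\mathcal{O}(\Dx)$ each; this exact cancellation is what keeps the source term at size $\mathcal{O}(t\Dx)$ and preserves the sharp constants in the statement.
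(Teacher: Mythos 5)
Your proposal is correct and follows essentially the same route as the paper: both proofs start from the observation that $\alpha=0$ freezes $V(t,\xi)=\bar V(\xi)$ and $V_{\Dx}(t,\xi)=\bar V_{\Dx}(\xi)$, use that $P_{\Dx}$ preserves the total energy so $V_\infty(t)=V_{\Dx,\infty}(t)$, reduce the source term in the $U$-equation to $\tfrac12 t\|\bar V-\bar V_{\Dx}\|_\infty\le t\Dx$, and then feed the result through \eqref{eq:esty} and \eqref{eq:est_u}. The paper works directly with the ODE \eqref{eq:LagrSystem2} and the equality of $V_\infty$ and $V_{\Dx,\infty}$, whereas you arrive at the same cancellation via the two half-line integrals of \eqref{eq:estU}; the computations are equivalent.
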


\begin{proof}
	 Note that $V(t, \xi) = \bar{V}(\xi)$ and $V_{\Dx}(t, \xi) \!= \bar{V}_{\Dx}(\xi)$ for all $\xi \in \R$ and $t \geq 0$. Thanks to Definition~\ref{def:ProjOP}, this implies $V_{\infty}(t) = \bar{V}_{\infty} = \bar{V}_{\Dx, \infty} = V_{\Dx, \infty}(t)$. It therefore follows from \eqref{eq:LagrSystem} and Lemma~\ref{lem:convInt} that 
	\begin{align*}
		|U(t, \xi) - U_{\Dx}(t, \xi)| &\leq |\bar{U}(\xi) - \bar{U}_{\Dx}(\xi)| 
		\\ & \qquad + \frac{1}{2}\int_0^t\! \!\left(\!V(s, \xi) - V_{\Dx}(s, \xi) - \frac{1}{2}\big(V_{\infty}(s) - V_{\Dx, \infty}(s)\big)\!\right)\!ds \nonumber
		\\ & \leq \|\bar{U} - \bar{U}_{\Dx}\|_{\infty} + \frac{1}{2}t \|\bar{V} - \bar{V}_{\Dx}\|_{\infty} \nonumber
		\\ & \leq (1+2\sqrt{2})\bar{F}_{\mathrm{ac}, \infty}^{\nicefrac{1}{2}}\Dx^{\nicefrac{1}{2}} + t\Dx,
\end{align*}
and from here on we can argue precisely as in the proof of Theorem~\ref{thm:ConvRateu}. \end{proof}

Next we address the issue of deriving a rate of convergence for the family $\{\mu_{\Dx}(t)\}_{\Dx > 0}$ of numerical energy measures. Recall that for any two nonnegative and finite Borel measures $\lambda$ and $\nu$,  we define the {\em bounded Lipschitz} metric (also known as  {\em Fortet--Mourier} metric) by 
\begin{equation}\label{eq:boundedLipschitz}
	d_{\mathrm{BL}}(\lambda, \nu) := \sup \Big \{\int_{\R}\psi(x)\big(d\lambda(x) - d\nu(x)\big)\!: \psi \in \mathrm{BL}(\R), \hspace{0.05cm} \|\psi\|_{\mathrm{BL}} \leq 1 \Big \},
\end{equation}
where $\mathrm{BL}(\R)$ denotes the space of bounded Lipschitz functions on $\R$ equipped with the norm
\begin{equation}\label{eq:BLnorm}
	\|f\|_{\mathrm{BL}} := \sup_{x \in \R}| f(x)| + \sup_{x \neq y} \frac{ \left| f(x) - f(y) \right|}{|x-y|} = \|f\|_{\infty} + \|f\|_{\mathrm{Lip}}. 
\end{equation}
The interested reader is referred to e.g., \cite[Chp. 3]{BogachevBook} or \cite[Chp. 8]{BogachevMeasure} for a detailed exposition on this metric. 

\begin{lemma}\label{lem:convRateMu}
	Given $(\bar{u}, \bar{\mu})\! \in \D$, $\alpha=0$, and $t \geq 0$, let $(u, \mu)(t) = T_t((\bar{u}, \bar{\mu}))$ and $(u_{\Dx}, \mu_{\Dx})(t) = T_t \circ P_{\Dx}((\bar{u}, \bar{\mu}))$, then 
	\begin{align}
		d_{\mathrm{BL}}(\mu(t), \mu_{\Dx}(t)) &\leq (1+2\sqrt{2})t\bar{F}_{\infty}^{\nicefrac{3}{2}}\Dx^{\nicefrac{1}{2}} + \Big(2+\frac{1}{2}t^2+ 4e^{\frac{1}{2}t}\Big)\bar{F}_{\infty}\Dx. \label{eq:convmu}
	\end{align}
\end{lemma}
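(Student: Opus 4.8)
The plan is to estimate $d_{\mathrm{BL}}(\mu(t), \mu_{\Dx}(t))$ by testing against an arbitrary $\psi \in \mathrm{BL}(\R)$ with $\|\psi\|_{\mathrm{BL}} \leq 1$ and pulling everything back to Lagrangian coordinates via the pushforward formula \eqref{eq:MapM2}. Concretely, since $d\mu = y_{\#}(V_\xi d\xi)$ and $d\mu_{\Dx} = y_{\Dx, \#}(V_{\Dx, \xi}d\xi)$, I would write
\begin{align*}
	\int_{\R}\psi \, d\mu(t) - \int_{\R}\psi \, d\mu_{\Dx}(t) = \int_{\R}\Bigl(\psi(y(t,\xi))V_\xi(t,\xi) - \psi(y_{\Dx}(t,\xi))V_{\Dx, \xi}(t,\xi)\Bigr)d\xi,
\end{align*}
and split the integrand into $[\psi(y(t)) - \psi(y_{\Dx}(t))]V_\xi(t)$ plus $\psi(y_{\Dx}(t))[V_\xi(t) - V_{\Dx, \xi}(t)]$. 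For the first term, $|\psi(y(t,\xi)) - \psi(y_{\Dx}(t,\xi))| \leq \|\psi\|_{\mathrm{Lip}}|y(t,\xi) - y_{\Dx}(t,\xi)| \leq \|y(t) - y_{\Dx}(t)\|_\infty$, and since $\int_{\R}V_\xi(t,\xi)d\xi = \bar F_\infty$ (conservation, $\alpha=0$), this contributes at most $\bar F_\infty \|y(t) - y_{\Dx}(t)\|_\infty$, which is controlled by \eqref{eq:consy}. For the second term, I would use $\|\psi\|_\infty \leq 1$ to bound it by $\|V_\xi(t) - V_{\Dx, \xi}(t)\|_1$, and here the key simplification is that in the conservative case $V_\xi(t,\xi) = \bar V_\xi(\xi)$ and $V_{\Dx, \xi}(t,\xi) = \bar V_{\Dx, \xi}(\xi)$ for all $t$, so this reduces to $\|\bar V_\xi - \bar V_{\Dx, \xi}\|_1$ — a quantity that is \emph{time-independent} and can be estimated using \eqref{eq:Proj_F_Lp}, since $\bar V(\xi) = \xi - \bar y(\xi)$ relates directly to the cumulative energy $\bar F$.

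The one subtlety is the appearance of the $e^{\frac{1}{2}t}$ factor in \eqref{eq:convmu}, which is larger than the polynomial-in-$t$ bounds in Lemma~\ref{lem:RateConservative}. This suggests the naive splitting above is not quite enough for the $\|\bar V_\xi - \bar V_{\Dx, \xi}\|_1$ piece; instead I expect one needs to be more careful and rewrite $V_\xi(t) - V_{\Dx, \xi}(t)$ in terms of $y_\xi(t) - y_{\Dx, \xi}(t)$ via the identity $V_\xi(t) = 1 - y_\xi(t)$ (conservative case), and then relate $\|y_\xi(t) - y_{\Dx,\xi}(t)\|_1$ back to the initial data by differentiating \eqref{eq:LagrSystem} in $\xi$ and applying a Grönwall-type argument over $[0,t]$. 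The linearized system for $(y_\xi, U_\xi, V_\xi)$ in the conservative case is $y_{\xi,t} = U_\xi$, $U_{\xi,t} = \frac12 V_\xi$, $V_\xi = \bar V_\xi$ constant, so actually the evolution is explicit and polynomial; the exponential must enter through controlling $\|\bar U_\xi - \bar U_{\Dx,\xi}\|_1$ or through a less direct estimate on the $L^1$-in-$\xi$ norm where one cannot simply use $y_\xi, y_{\Dx,\xi} \in [0,1]$ pointwise. The cleanest route is probably: bound the second term by $\int_\R |V_\xi(t,\xi) - V_{\Dx,\xi}(t,\xi)|d\xi$ where one keeps the time-$t$ form and uses the explicit solution formulas for $\alpha = 0$ (no wave breaking modification of $V_\xi$), then integrate; the $e^{t/2}$ likely comes from a cruder bound on $\|U_\xi(t) - U_{\Dx,\xi}(t)\|_1$ via Grönwall rather than the sharp explicit formula.

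Thus the steps, in order: (1) invoke the pushforward representation \eqref{eq:MapM2} and split the tested difference into the two terms above; (2) bound the first term by $\bar F_\infty\|y(t) - y_{\Dx}(t)\|_\infty$ and insert \eqref{eq:consy} from Lemma~\ref{lem:RateConservative}; (3) for the second term, use $\|\psi\|_\infty \leq 1$ and the conservative-case identities to reduce to an $L^1$-in-$\xi$ estimate on $V_\xi(t) - V_{\Dx,\xi}(t)$; (4) control this via the initial estimates of Lemma~\ref{lem:convInt} (namely $\|\bar V - \bar V_{\Dx}\|_\infty \leq 2\Dx$ and $\|\bar U - \bar U_{\Dx}\|_\infty$) together with a Grönwall argument in time, producing the $e^{\frac12 t}$ term; (5) collect the powers of $\Dx$ — the $\Dx^{1/2}$ from the $y$-term times $\bar F_\infty$ gives the $\bar F_\infty^{3/2}$ coefficient, and the $\Dx$ terms combine with the claimed coefficient. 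The main obstacle I anticipate is step (4): pinning down exactly where the exponential growth in $t$ is unavoidable and keeping the constant in front equal to the stated $4e^{\frac12 t}$ rather than something larger — this requires either a sharp Grönwall or a careful bookkeeping of how $\|U_\xi(t) - U_{\Dx,\xi}(t)\|_1$ feeds back into $\|y_\xi(t) - y_{\Dx,\xi}(t)\|_1$ and hence into $\|V_\xi(t) - V_{\Dx,\xi}(t)\|_1$.
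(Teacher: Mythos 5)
Your setup is correct up through the split of the tested difference into
\begin{equation*}
\int_\R\bigl(\psi(y(t,\xi))-\psi(y_{\Dx}(t,\xi))\bigr)V_\xi(t,\xi)\,d\xi
\;+\;
\int_\R \psi(y_{\Dx}(t,\xi))\bigl(V_\xi(t,\xi)-V_{\Dx,\xi}(t,\xi)\bigr)\,d\xi,
\end{equation*}
and your treatment of the first piece matches the paper exactly. The gap is in the second piece, and you have essentially diagnosed it yourself without finding the cure. As you note, $V_\xi(t,\xi)=\bar V_\xi(\xi)$ and $V_{\Dx,\xi}(t,\xi)=\bar V_{\Dx,\xi}(\xi)$ for all $t$ when $\alpha=0$, so the quantity you want to bound, $\|V_\xi(t)-V_{\Dx,\xi}(t)\|_1=\|\bar V_\xi-\bar V_{\Dx,\xi}\|_1$, is completely time-independent; there is no Grönwall argument to run because there is no dynamics in $V_\xi$. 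Nor is this $L^1$ norm controlled at rate $\mathcal{O}(\Dx)$ by any of the available estimates: Lemma~\ref{lem:convInt} only gives $\|\bar V-\bar V_{\Dx}\|_\infty\leq 2\Dx$, a bound on the antiderivative in $L^\infty$, not on the density in $L^1$. The Grönwall route on the linearized system is a dead end for exactly the reason you observe (the $\alpha=0$ system for $(y_\xi,U_\xi,V_\xi)$ is explicit and polynomial in $t$, with no feedback loop to close), and the $e^{t/2}$ does not appear from that direction.

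The missing idea is an \emph{integration by parts} in $\xi$. Because the boundary terms vanish (the left and right asymptotes of $\bar V$ and $\bar V_{\Dx}$ coincide by Definition~\ref{def:ProjOP}), one rewrites
\begin{equation*}
\int_\R \psi(y_{\Dx}(t,\xi))\bigl(\bar V_\xi(\xi)-\bar V_{\Dx,\xi}(\xi)\bigr)\,d\xi
= -\int_\R \psi'(y_{\Dx}(t,\xi))\,y_{\Dx,\xi}(t,\xi)\bigl(\bar V(\xi)-\bar V_{\Dx}(\xi)\bigr)\,d\xi.
\end{equation*}
This trades the uncontrolled $\bar V_\xi-\bar V_{\Dx,\xi}$ for the controlled $\bar V-\bar V_{\Dx}$, at the cost of a factor $y_{\Dx,\xi}(t,\xi)$. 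That factor is then bounded pointwise via the estimate $y_{\Dx,\xi}(t,\xi)+\bar V_{\Dx,\xi}(\xi)\leq(\bar y_{\Dx,\xi}(\xi)+\bar V_{\Dx,\xi}(\xi))e^{t/2}$ from the proof of \cite[(2.15)]{AlphaHS}; this is the sole source of the exponential in $t$. One then splits into the $\bar V_{\Dx,\xi}$-weighted integral (bounded by $2e^{t/2}\bar F_\infty\Dx$ via $\|\bar V-\bar V_{\Dx}\|_\infty\leq 2\Dx$ and $\|\bar V_{\Dx,\xi}\|_1=\bar F_\infty$) and the $\bar y_{\Dx,\xi}$-weighted integral, where the monotonicity of $\bar V$, $\bar V_{\Dx}$ and the fact that they agree at the gridpoints $\xi_{3j}$ give the matching bound $2e^{t/2}\bar F_\infty\Dx$. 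Without the integration by parts, there is no way to convert the $L^\infty$ control on $\bar V-\bar V_{\Dx}$ into the needed estimate, so this is a genuine missing step rather than an alternate route.
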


\begin{proof}
Let $(y, U, V)(t) = S_t \circ L((\bar{u}, \bar{\mu}))$ and $(y_{\Dx}, U_{\Dx}, V_{\Dx})(t) = S_t \circ L \circ P_{\Dx}((\bar{u}, \bar{\mu}))$, and pick any $\psi \in \mathrm{BL}(\R)$ with $\|\psi \|_{\mathrm{BL}} \leq 1$. It follows from \eqref{eq:MapM} that 
\begin{subequations}
\begin{align}
	\left|\int_{\R} \psi(x)\big(d\mu(t) - d\mu_{\Dx}(t) \big) \right|\! \! &= \left |\int_{\R} \psi(y(t, \xi)) V_{\xi}(t, \xi) d\xi - \int_{\R}\psi(y_{\Dx}(t, \xi))V_{\Dx, \xi}(t, \xi)d\xi  \right| \nonumber
	\\ &\leq \int_{\R} \left|\psi(y(t, \xi)) - \psi(y_{\Dx}(t, \xi)) \right|\!V_{\xi}(t, \xi)d\xi \label{eq:convF1}
	\\ &\qquad + \left|\int_{\R}\psi(y_{\Dx}(t, \xi))\left (V_{\xi}(t, \xi) - V_{\Dx, \xi}(t, \xi) \right)\!d\xi \right|\!  \label{eq:convF2},
\end{align}
\end{subequations}
and since $\|\psi\|_{\mathrm{Lip}} \leq 1$ and $V_{\xi}(t, \xi) = \bar{V}_{\xi}(\xi)$, we have the following estimate for \eqref{eq:convF1}, \vspace{-0.05cm}
\begin{align}\label{eq:est_convF1}
	\int_{\R} \big|\psi(y(t, \xi)) - \psi(y_{\Dx}(t, \xi)) \big|V_{\xi}(t, \xi)d\xi &\leq \int_{\R} \left| y(t, \xi) - y_{\Dx}(t, \xi) \right|\!\bar{V}_{\xi}(\xi)d\xi \nonumber 
	\\ & \leq \|y(t) - y_{\Dx}(t) \|_{\infty}\bar{F}_{\infty}. 
\end{align}
For the term \eqref{eq:convF2}, we integrate by parts, which yields \vspace{-0.075cm}
\begin{align}\label{eq:convF2_temp}
	\biggl|\int_{\R}\psi(y_{\Dx}(t, \xi))\bigl(V_{\xi}(t, \xi) &- V_{\Dx, \xi}(t, \xi) \bigr)d\xi \biggr| \nonumber
	\\ &= \left|\int_{\R} \psi'(y_{\Dx}(t, \xi)) y_{\Dx, \xi}(t, \xi) \big(\bar{V}(\xi) - \bar{V}_{\Dx}(\xi)\big)d\xi \right|\!,
\end{align}
because the boundary terms vanish as Definition~\ref{def:ProjOP} implies that left and right asymptotes of $\bar{V}$ and $\bar{V}_{\Dx}$ coincide. Moreover, by following the proof of \cite[(2.15)]{AlphaHS}, we find
\begin{equation*}
	y_{\Dx, \xi}(t, \xi) + \bar{V}_{\Dx, \xi}(\xi) \leq \left(\bar{y}_{\Dx, \xi}(\xi) + \bar{V}_{\Dx, \xi}(\xi) \right)e^{\frac{1}{2}t},
\end{equation*}
which together with Lemma~\ref{lem:convInt} and the fact that $\bar{V}$ and $\bar{V}_{\Dx}$ are increasing and satisfy $\bar{V}_{\Dx}(\xi_{3j}) =  \bar{F}(x_{2j}) =\bar{V}(\xi_{3j})$ for all $j \in \mathbb{Z}$ by Definition~\ref{def:ProjOP}, implies the following upper bound on \eqref{eq:convF2_temp},   \vspace{-0.075cm}
\begin{align*}
	\int_{\R} \left |\psi'(y_{\Dx}(t, \xi)) \right | &y_{\Dx, \xi}(t, \xi)\! \left |\bar{V}(\xi) - \bar{V}_{\Dx}(\xi) \right|\!d\xi \nonumber
	\\ & \leq e^{\frac{1}{2}t} \int_{\R} \left |\bar{V}(\xi) - \bar{V}_{\Dx}(\xi) \right| \!\bar{V}_{\Dx, \xi}(\xi) d\xi \nonumber
	\\ & \qquad +  e^{\frac{1}{2}t} \int_{\R}  \left |\bar{V}(\xi) - \bar{V}_{\Dx}(\xi) \right| \!\bar{y}_{\Dx, \xi}(\xi)d\xi \nonumber
	\\ & \leq 2e^{\frac{1}{2}t} \bar{F}_{\infty} \Dx + e^{\frac{1}{2}t}\sum_{j \in \mathbb{Z}} \int_{\xi_{3j}}^{\xi_{3j+3}} \!\left |\bar{V}( \xi_{3j+3}) - \bar{V}(\xi_{3j}) \right|\bar{y}_{\Dx, \xi}(\xi)d\xi \nonumber
	\\ & \leq 2e^{\frac{1}{2}t} \bar{F}_{\infty} \Dx+ 2e^{\frac{1}{2}t}\sum_{j \in \mathbb{Z}} \! \left(\bar{F}(x_{2j+2}) - \bar{F}(x_{2j})\right)\! \Dx \nonumber
	\\ & \leq 4 e^{\frac{1}{2}t} \bar{F}_{\infty} \Dx. 
\end{align*}
At last, combining this estimate with Lemma~\ref{lem:RateConservative} and  \eqref{eq:est_convF1}, and thereafter taking the supremum over all functions in $\mathrm{BL}(\R)$ satisfying $\|\psi\|_{\mathrm{BL}}\leq 1$, gives \eqref{eq:convmu}. 
\end{proof}

\begin{remark}
	If we impose a first order moment condition on $\bar{\mu}$, i.e., assume that $\bar{\mu} \in \M_1^+(\R)\!:=\{\lambda \in \M^+(\R)\!: \int_{\R}|x|d\lambda < \infty \}$, then one can argue in precisely the same way using the 1-Wasserstein distance. This distance, which is defined for any two measures $\lambda$ and $\nu$ that belong to $\M_1^+(\R)$ and have the same total mass, can be expressed in the following two equivalent ways, see \cite[Sec. 1 and 7]{OptimalTransport}, 
\begin{align*}
	W_1(\lambda, \nu) := \sup_{ \|\psi \|_{\mathrm{Lip}} \leq 1 } \left \{ \int_{\R} \psi(x)\big(d\lambda(x) - d\nu(x) \big) \right \}\! = \|F_{\lambda} - F_{\nu}\|_1. 
\end{align*}
Here $\|\cdot \|_{\mathrm{Lip}}$ denotes the Lipschitz seminorm introduced in \eqref{eq:BLnorm} and $F_{\lambda}(x) = \lambda((-\infty, x))$ and $F_{\nu}(x) = \nu((\infty, x))$ are the cumulative functions associated with $\lambda$ and $\nu$, respectively.

This metric can be applied in the case of conservative solutions, because \eqref{eq:Proj_F_Lp} implies $\bar{\mu}_{\Dx}\in \M_1^+(\R)$, and the proof of \cite[Thm. 2.11]{ConsMetric} reveals that both $\mu(t)$ and $\mu_{\Dx}(t)$ carry the first moment forward in time, i.e., they belong to $\M_1^{+}(\R)$ for all $t\geq 0$. Moreover, Definition~\ref{def:ProjOP} guarantees that $\bar{\mu}_{\Dx}(\R) = \bar{\mu}(\R)$ and $\mu(t, \R) = \mu_{\Dx}(t, \R)$ when $\alpha=0$. As a consequence, we have 
	\begin{equation*}
		\|F(t) - F_{\Dx}(t)\|_1\leq (1+2\sqrt{2})t\bar{F}_{\infty}^{\nicefrac{3}{2}}\Dx^{\nicefrac{1}{2}} + \Big(2+\frac{1}{2}t^2+ 4e^{\frac{1}{2}t}\Big)\bar{F}_{\infty}\Dx,
	\end{equation*}
	whenever $\bar{\mu} \in \M_1^+(\R)$ and $\alpha = 0$. 
\end{remark}

In the nonconservative case ($\alpha \neq 0$), there is no reason to expect that $\mu(t)$ and $\mu_{\Dx}(t)$ have the same total mass for a fixed $t > 0$, because we lack sufficient control over the size of the following sets 
\begin{equation*}
	\left \{x\!: \bar{u}_x(x) \leq -\frac{2}{t} < \bar{u}_{\Dx, x}(x) \right\} \bigcup \left \{x\!: \bar{u}_{\Dx, x}(x) \leq -\frac{2}{t} < \bar{u}_x(x)  \right \} \!. 
\end{equation*}
The former set contains the points in Eulerian coordinates for which the exact solution breaks within  $[0, t]$, whereas the numerical solution either breaks at some later time or does not break at all. The second set consists of those points for which the opposite holds. Note that the $L^2$-error bound on $u_{\Dx, x}$ in Lemma~\ref{lem:Rateux} provides some control on the difference in measure of these sets, as we have by the Chebychev inequality
\begin{equation*}
	\Big| \big \{x: |\bar{u}_x(x) - \bar{u}_{\Dx, x}| \geq \Delta x^{\gamma} \big \} \Big| \leq \Dx^{-2\gamma} \|\bar{u}_x - \bar{u}_{\Dx, x}\|_2 \leq D \Dx^{\nicefrac{(\beta - 4 \gamma)}{2}}, 
\end{equation*}
which tends to zero as $\Dx \rightarrow 0$ if $\gamma < \frac{\beta}{4}$. Yet, one is unable to use this for proving that $\{\mu_{\Dx}(t)\}_{\Dx > 0}$ converges weakly or even vaguely for every fixed $t > 0$.  The reason is that one needs to control $\|V_{\xi}(t) - V_{\Dx, \xi}(t)\|_p$ or $\|V(t) - V_{\Dx}(t)\|_p$, for some $p \in [1, \infty]$, and this is hard. On the contrary, when one derives a convergence rate for $\{u_{\Dx}(t)\}_{\Dx > 0}$ one needs an error estimate for $\{U_{\Dx}(t)\}_{\Dx > 0}$, in which case one only needs to control the time integral appearing in \eqref{eq:estU}. 

\begin{remark}\label{rem:otherConservative}
	A fully discrete numerical method for conservative solutions was proposed in \cite{NumericalConservative}. This method is based on successively evolving the numerical solution time steps of length $\Delta t$ forward along characteristics ($\Delta t$ is related to $\Dx$ through a CFL-like condition), and after each time increment a piecewise linear projection operator is applied. Let $(\tilde{u}_{\Dx}, \tilde{F}_{\Dx})$ denote the resulting numerical solution computed this way on a fixed mesh. Then it is shown that
\begin{equation*}
		\sup_{t \in [0, T]} \left( \|\!\left(\tilde{u}_{\Dx} - u\right)\!(t)\|_{\infty} + \|\! \left(\tilde{F}_{\Dx} - F\right)\!(t)\|_{\infty} \right) \!\leq \mathcal{O}(\Dx^{\nicefrac{1}{2}}),
	\end{equation*}
	provided $(u, F) \in [W^{1, \infty}([0, T] \times \R)]^2$, i.e., before wave breaking takes place, see \cite[Thm. 3]{NumericalConservative}. This result can be improved for our numerical method. In particular, if $(u, F) \in [W^{1, \infty}([0, T] \times \R)]^2$, then 
\begin{equation}\label{eq:equalityGridpoints}
	u_{\Dx}(t, y_{\Dx}(t, \xi_{3j}))\! = \!u(t, y(t, \xi_{3j}))  \hspace{0.25cm} \text{and} \hspace{0.25cm} F_{\Dx}(t, y_{\Dx}(t, \xi_{3j}))\! = \!F(t, y(t, \xi_{3j})), 
\end{equation}
for all $j \in \mathbb{Z}$. Moreover, since we only apply $P_{\Dx}$ from Definition~\ref{def:ProjOP} once, following the proof of  \cite[Thm. 3]{NumericalConservative} and using \eqref{eq:equalityGridpoints} reveals that our method satisfies 
\begin{equation}\label{eq:convRateLip}
	\sup_{t \in [0, T]} \big( \|(u_{\Dx} - u)(t)\|_{\infty} + \|(F_{\Dx} - F)(t)\|_{\infty}\big) \!\leq \mathcal{O}(\Dx). 
\end{equation}
	In practice, this means that our numerical method converges in the case of Lipschitz initial data with order $\mathcal{O}(\Dx)$ prior to wave breaking. This is also true when $\alpha \neq 0$, provided the numerical solutions do not experience wave breaking. 
\end{remark} 

\vspace{-0.3cm}
\section{Explicit examples with convergence rates}\label{sec:ExplicitRates}
We complement the previous three sections by examining three explicit examples that satisfy the conditions of Theorem~\ref{thm:ConvRateu}.  They are inspired by the examples explored numerically in \cite[Sec. 5]{AlphaAlgorithm}. For each example, we perform one or several tests of the numerical convergence rate. More precisely, we compute a sequence $\{(u_{\Dx_k}, F_{\Dx_k})\}_{k}$ of numerical approximations,
where
\begin{equation*}
	\Dx_k = 4^{-k} \hspace{0.35cm} k \geq 1, 
\end{equation*}
and thereafter compute the relative errors 
\begin{equation*}
	\mathrm{Err}_{k}(T) := \sup_{t \in [0, T]} \frac{ \|u(t) - u_{\Dx_k}(t)\|_{\infty}}{\|u(t)\|_{\infty}},
\end{equation*}
and the experimental orders of convergence (EOCs)
\begin{equation*}
	\mathrm{EOC}_k(T) := \frac{\ln ( \nicefrac{\mathrm{Err}_{k-1}(T)}{\mathrm{Err}_{k}(T)})}{\ln (\nicefrac{\Dx_{k-1}}{\Dx_k} )}. 
\end{equation*}

\begin{example}[Multipeakon data]\label{ex:Multipeakon}
Firstly, let us consider a general multipeakon, i.e., a piecewise linear and continuous wave profile $\bar{u}$. This constitutes an important family of initial data, because the piecewise linear nature of the data is naturally preserved by the HS equation. Moreover, it is also one of the few classes of solutions for where explicit expressions can be computed.

 Let $\{a_j\}_{j=1}^{N-1}$, $\{b_j\}_{j=0}^N$, and $\{z_j\}_{j=0}^{N+1}$ be real-valued sequences with $ z_k < z_{k+1}$ for $k \in \{0, .., N\}$, and consider the multipeakon
\begin{align}\label{eq:multipeakonInitial}
	\bar{u}(x) &= \begin{cases}
	b_0, & x < z_0, \\
	\vdots  \\
	a_n x + b_n, & z_n \leq x < z_{n+1}, \\
	a_{n+1}x + b_{n+1}, & z_{n+1} \leq x < z_{n+2}, \\
	\vdots \\
	b_N, &  z_{N+1} \leq x. 
	\end{cases}
\end{align}
The derivative is a bounded, piecewise constant function with finitely many jumps, and its total variation is therefore the sum of the absolute value of these jumps, namely \vspace{-0.15cm}
\begin{equation*}
	TV(\bar{u}_x) = |a_1| + \sum_{n=1}^{N-2}|a_{n+1} - a_{n}| + |a_{N-1}| < \infty. 
\end{equation*}
Consequently, for any $h \in \R$, \vspace{-0.05cm}
\begin{align*}
	\int_{\R} \left(\bar{u}_x(x) - \bar{u}_x(x-h) \right)^2dx &\leq 2 \sup_{j \in \{1, ..., N-1 \}}|a_j| \int_{\R} \left|\bar{u}_x(x) - \bar{u}_x(x-h) \right|dx \\
	&= 2  \|\bar{u}_x \|_{\infty} \sum_{j \in \mathbb{Z}} \int_{jh}^{(j+1)h} \left|\bar{u}_x(x) - \bar{u}_x(x-h) \right|dx \\ 
	&= 2  \|\bar{u}_x \|_{\infty} \int_0^h \sum_{j \in \mathbb{Z}} \left| \bar{u}_x(z+jh) - \bar{u}_x(z+(j-1)h) \right|dz
	\\ &  \leq 2 \|\bar{u}_x \|_{\infty} TV(\bar{u}_x) h, 
\end{align*}
or, in other words, $\bar{u}_x$ belongs to $B_2^{\nicefrac{1}{2}}(\R)$ for any multipeakon data. In fact, this is a particular case of the more general inclusion  \vspace{-0.075cm}
\begin{equation}\label{eq:BVinclusion}
	\mathrm{BV}(\R) \cap L^1(\R) \subset B_2^{\nicefrac{1}{2}}(\R).
\end{equation}
As a consequence, Theorem~\ref{thm:ConvRateu} implies, that for any $(\bar{u}, \bar{\mu}) \in \D$ with $\bar{u}_x \in \mathrm{BV}(\R) \cap L^1(\R)$ and $\bar{\mu}_{\mathrm{sc}} = 0$, we have
\begin{align}\label{eq:rateBV}
	\|u(t) - u_{\Dx}(t) \|_{\infty} \leq \mathcal{O}(\Dx^{\nicefrac{1}{16}}),  
\end{align}
where $(u, \mu)(t) = T_t((\bar{u}, \bar{\mu}))$ and $(u_{\Dx}, \mu_{\Dx})(t) = T_{t} \circ P_{\Dx}(((\bar{u}, \bar{\mu}))$. Moreover, if $\bar{u}$ is given by \eqref{eq:multipeakonInitial} and $a_j \geq 0$ for all $j \in \{1, ..., N-1\}$ with $d\bar{\mu} = \bar{u}_x^2dx$, then no wave breaking takes place for any $t\geq 0$, such that the conservative and $\alpha$-dissipative solution coincide, and \eqref{eq:convRateLip} gives the improved bound,  $\|u(t) - u_{\Dx}(t) \|_{\infty} \leq \mathcal{O}(\Dx)$. 

We have revisited Example~\ref{ex:simpleBreaking} in Figure \ref{fig:testPeakon}, where the exact solution $(u, F)$ is compared to two numerical approximations, $(u_{\Dx_j}, F_{\Dx_j})$ for $j \in \{c, f\}$, with $\Dx_c = \frac{1}{4}$ and $\Dx_f = 10^{-3}$. The solutions are compared at the times $t=0$, $2$, and $4$.  Furthermore, Table~\ref{tab:EOCMult} displays the associated relative errors, which are computed before wave breaking with $T=\nicefrac{3}{2}$ (top row) and after wave breaking $T=3$ (bottom row) when $\alpha = \nicefrac{1}{2}$. We have not included the corresponding EOCs for this example, because machine precision is nearly attained even for $k=1$, such that round-off errors are predominant, i.e, they perturb the computed EOCs.  

\begin{figure}
	\includegraphics[scale=0.9]{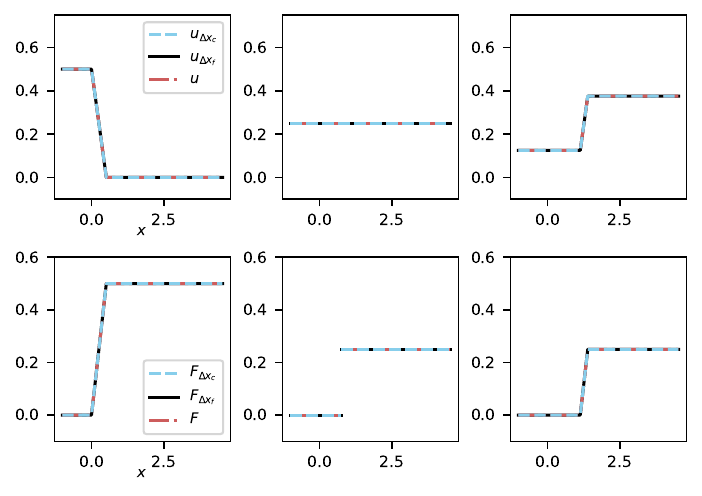}
	\captionsetup{width=.925\linewidth}
	\vspace{-2ex}
	\caption{A comparison of $u$ (top row) and $F$ (bottom row), both with dotted red lines, to that of $u_{\Dx_j}$ (top row) and $F_{\Dx_j}$ (bottom row) for $\Dx_c = \nicefrac{1}{4}$ (blue dashed) and $\Dx_f=10^{-3}$ (black solid) for the multipeakon from Example~\ref{ex:simpleBreaking}. The solutions are compared from left to right at $t=0$, $2$, and $4$ with $\alpha=\nicefrac{1}{2}$.}
	\label{fig:testPeakon}
\end{figure}	

 \begin{table}
  \setlength\tabcolsep{4.0pt}
\begin{tabular}{c|cccccc} 
 \hline
 $k$  & $1$ & $2$ & $3$ & $4$ & $5$ & $6$ \\ 
 \hline
  $\mathrm{Err}_k(\nicefrac{3}{2})$  & $2.3 \!\cdot \! 10^{-15}$ & $5.1 \!\cdot \!10^{-15}$ & $2.0 \! \cdot \!10^{-14}$ & $8.5 \! \cdot \!10^{-14}$ & $3.1 \!\cdot \!10^{-13}$ & $1.3 \! \cdot \!10^{-12}$ \\ 
 \hline 
 $\mathrm{Err}_k(3)$  & $2.0 \!\cdot \!10^{-14}$ & $2.5 \!\cdot \!10^{-14}$ & $7.3 \!\cdot \!10^{-14}$ & $3.8 \!\cdot \!10^{-13}$ & $1.5 \!\cdot \!10^{-12}$ & $5.5\! \cdot \!10^{-11}$ \\ 
\end{tabular}
	\captionsetup{width=.925\linewidth}
	\vspace{-0.35cm}
\caption{Computed relative errors for the multipeakon from Example~\ref{ex:simpleBreaking} when $\alpha = \nicefrac{1}{2}$. These errors are computed before wave breaking takes place, with $T=\nicefrac{3}{2}$ (top row), and after wave breaking has occurred, with $T=3$ (bottom row).}
\label{tab:EOCMult}
\end{table}

However, we cannot expect to attain machine precision for a general multipeakon, because in Example~\ref{ex:simpleBreaking} the uniform mesh aligned perfectly with the nodes situated at $x=0$ and $x=\nicefrac{1}{2}$. Thus, if we were to replace the chosen sequence of mesh parameters with some other sequence, say $\widetilde{\Dx}_k = 3^{-k}$, then the errors would become much worse. Alternatively, if we modify the initial data slightly and instead consider 
\begin{align}\label{eq:worsePeakon}
	\bar{u}(x) &= \begin{cases}
	\frac{1}{3}, & x < 0, \\
	\frac{1}{3}-x, & 0 \leq x \leq \frac{1}{3}, \\
	0, & \frac{1}{3} < x, 
	\end{cases} \nonumber \\
	d\bar{\mu} &= \bar{u}_x^2dx,
\end{align}
again with $\alpha=\nicefrac{1}{2}$, then the relative errors also become much worse as displayed in Table~\ref{tab:EOCMultWorse}. This happens even if the corresponding solution, $(u, F)(t)$, behaves completely similar to the previous one. The main difference now is that one node is located at $x=\nicefrac{1}{3}$. 

Table~\ref{tab:EOCMultWorse} also displays the computed EOCs. It reveals, in particular, that $\mathrm{EOC}_k(\nicefrac{3}{2})$ is approximately equal to one for each $k=1, ... 6$, which agrees with the predicted $\mathcal{O}(\Dx)$-rate from \eqref{eq:convRateLip}, because no wave breaking has occured for $t \leq \nicefrac{3}{2}$. Moreover, the table also displays $\mathrm{EOC}_k(3)$, which also is close to one for each $k=1, ... 6$. This on the other hand, is much better than the $\mathcal{O}(\Dx^{\nicefrac{1}{16}})$-rate predicted by \eqref{eq:rateBV}.

In other words, the numerical method performs really well for multipeakon data, which is to be expected, because the applied projection operator is piecewise linear. 
 \begin{table}
  \setlength\tabcolsep{3.5pt}
\begin{tabular}{c|cccccc} 
 \hline
 $k$  & $1$ & $2$ & $3$ & $4$ & $5$ & $6$ \\ 
 \hline
  $\mathrm{Err}_k(\nicefrac{3}{2})$  & $1.82\! \cdot \! 10^{-1}$ & $4.51\! \cdot\! 10^{-2}$ & $1.10 \! \cdot \!10^{-2}$ & $2.63 \! \cdot \!10^{-3}$ & $6.47 \!\cdot \!10^{-4}$ & $1.62 \! \cdot \!10^{-4}$ \\ 
  $\mathrm{EOC}_k(\nicefrac{3}{2})$ & $-$ & $1.01$ & $1.02$ & $1.03$ & $1.01$ & $1.00$ \\ 
  \hline 
 $\mathrm{Err}_k(3)$  & $2.09 \!\cdot \!10^{-1}$ & $5.18 \!\cdot \!10^{-2}$ & $1.28 \!\cdot \!10^{-2}$ & $3.63 \!\cdot \!10^{-3}$ & $9.08 \!\cdot \!10^{-4}$ & $2.27\! \cdot \!10^{-4}$ \\ 
 $\mathrm{EOC}_k(3)$ & $-$ & $1.01$ & $1.01$ & $0.91$ & $1.00$ & $1.00$
\end{tabular}
	\captionsetup{width=.925\linewidth}
	\vspace{-0.35cm}
\caption{Computed relative errors and EOCs for the multipeakon with initial data \eqref{eq:worsePeakon} in the case of $\alpha = \nicefrac{1}{2}$. The two upper rows display the computed relative errors and EOCs for $T=\nicefrac{3}{2}$, that is, before any wave breaking takes place, while the two last rows show the same quantities computed with $T=3$.}
\label{tab:EOCMultWorse}
\end{table}

  \vspace{-0.15cm}

\end{example}

\begin{example}[Cosine data]\label{ex:cosinus}
	Let $\bar{\mu} \in \M^+(\R)$ with $\bar{\mu}_{\mathrm{sc}} =0$ and 
	\begin{align*}
		\bar{u}(x) &= \begin{cases}
		1, & x < 0, \\
		\cos(\pi x), & 0 \leq x \leq 4, \\
		1, & 4 < x.
		\end{cases}
	\end{align*}
	The exact solution to \eqref{eq:HS} with initial data $(\bar{u}, \bar{\mu})$ experiences wave breaking continuously over the time interval $[\frac{2}{\pi}, \infty)$, and there is only a finite number of characteristics experiencing wave breaking at each time, see \cite[Sec. 5]{AlphaAlgorithm} for further details. Despite this, Theorem~\ref{thm:ConvRateu} implies \vspace{-0.1cm}
\begin{equation*}
		\|u(t) - u_{\Dx}(t)\|_{\infty} \leq \mathcal{O}(\Dx^{\nicefrac{1}{8}}) \hspace{0.35cm} \text{for all } t > 0, 
\end{equation*}
because $\bar{u}_x$ belongs to $C^{0, 1}_c \subset B_2^1$. 
	
	Figure~\ref{fig:testCosine} displays the time evolution for the associated solution (whose expression is implicitly known in Lagrangian coordinates, and one has to numerically invert $y$ to compute the Eulerian expression) when $d\bar{\mu} = \bar{u}_x^2dx$ and $\alpha = \nicefrac{3}{4}$. This solution is compared to two numerical approximations, $(u_{\Dx_j}, F_{\Dx_j})$ for $j \in \{c, f\}$ with $\Dx_c$ and $\Dx_f$ as in the previous example. Moreover, Table~\ref{tab:EOCCosine} shows the corresponding relative errors and EOCs for $T=\nicefrac{3}{5}$ (before any wave breaking has occurred) and $T=\nicefrac{6}{5}$ (after wave breaking has started to take place). The relative errors are approximated with the right-hand side of \eqref{eq:est_u}, but as the Lagrangian solution is only known implicitly, one has to compute a numerical inverse in order to obtain an explicit expression. Unfortunately, this introduces an additional error which is limited to 6 digits of accuracy, such that it starts to influence the computed EOCs for $k\geq 6$, because then this error is of the same order of magnitude as the approximation error (due to square root in \eqref{eq:est_u}). Yet, one still observes that all the EOCs for $T = \nicefrac{3}{5}$, except the ones for $k \geq 6$, are roughly equal to $1$ which agrees with \eqref{eq:convRateLip}. 
		
\begin{figure}
	\includegraphics[scale=0.9]{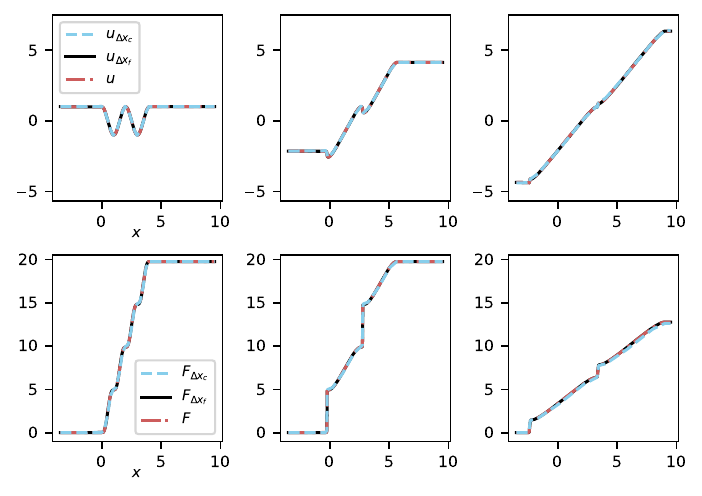}
	\captionsetup{width=.925\linewidth}
	\vspace{-2ex}
	\caption{A comparison of $u$ (top row) and $F$ (bottom row), both with dotted red lines, to that of $u_{\Dx_j}$ (top row) and $F_{\Dx_j}$ (bottom row) for $\Dx_c = \nicefrac{1}{4}$ (blue dashed) and $\Dx_f=10^{-3}$ (black solid) for Example~\ref{ex:cosinus}. The solutions are compared from left to right at $t=0$, $\frac{2}{\pi}$, and $\frac{4}{\pi}$, with $\alpha=\nicefrac{3}{4}$. }
	\label{fig:testCosine}
\end{figure}
	
	\begin{table}
	  \setlength\tabcolsep{4.5pt}
\begin{tabular}{c|ccccccc} 
 \hline
 $k$  & $1$ & $2$ & $3$ & $4$ & $5$ & $6$ & $7$  \\ 
 \hline
  $\mathrm{Err}_k(\nicefrac{3}{5})$  & $1.14$ & $0.24$ & $0.06$ & $0.016$ & $3.96\!\cdot\! 10^{-3}$ & $1.91\! \cdot \!10^{-3}$ & $1.79\!\cdot\! 10^{-3}$  \\ 
  $\mathrm{EOC}_k(\nicefrac{3}{5})$  & $-$ & $1.12$ & $0.97$ & $1.00$ & $1.00$ & $0.53$ & $0.05$\\
   \hline
   $\mathrm{Err}_k(\nicefrac{6}{5})$  & $1.20$ & $0.29$ & $0.071$ & $0.018$ & $4.44 \!\cdot \!10^{-3}$ & $2.70\! \cdot \!10^{-3}$  & $2.54\! \cdot \! 10^{-3}$\\ 
  $\mathrm{EOC}_k(\nicefrac{6}{5})$  & $-$ & $1.03$ & $1.00$ & $1.00$ & $1.00$ & $0.36$ & $0.04$
\end{tabular}
	\captionsetup{width=.925\linewidth}
	\vspace{-0.35cm}
\caption{Computed relative errors and experimental convergence orders for Example~\ref{ex:cosinus}. The two upper rows display the relative errors and experimental orders of convergence before any wave breaking takes place, while the other two rows show the same quantities computed with $T=\nicefrac{6}{5}$.}
\label{tab:EOCCosine}
\end{table}	
\end{example}

In the previous example, infinitesimal energy concentrations took place for each $t \in [\frac{2}{\pi}, \infty)$, but what about the possibility of an infinitesimal amount of energy concentrating initially, i.e., at $t=0$, in which case $\bar{u}_x \notin L^{\infty}(\R)$? This phenomenon has been observed for cusped initial data, see e.g., \cite[Sec. 5]{AlphaAlgorithm} or \cite[Sec. 3]{NumericalConservative}. 

Since $C_c^{0, \beta}\subset L^{\infty}(\R)$ for all $\beta \in (0, 1]$, the case $\bar{u}_x \in C_c^{0, \beta}$ excludes wave breaking times from accumulating initially, the same can be said for $\bar{u}_x \in \mathrm{BV}(\R)$, but it can occur when $\bar{u}_x \in B_2^{\beta}$. Indeed, introduce the Besov space
	\begin{equation*}
		\tilde{B}_2^{\beta}(\R)\! := \big\{f \in L^2(\R)\!: \!\exists C > 0 \text{ s.t. } \|\delta_hf\|_2 \leq Ch^{\beta} \text{ for all }\! h \geq 0 \big\}, 
	\end{equation*}
	then, clearly $\tilde{B}_2^{\beta}(\R) \subseteq B_2^{\beta}$. Moreover, the following imbedding property was proved in \cite[Corollary 33]{Nikolskii}, 
\begin{equation}\label{eq:imbedding}
	\tilde{B}_2^{\beta}(\R) \subset L^q(\R), 
\end{equation}
 with $q \in [2, \infty]$ for $\beta >\frac{1}{2}$, and $q \in [2, p^*)$ in the case $\beta \leq \frac{1}{2}$, where 
 \begin{equation}\label{eq:criticalParameter}
 	p^* = \frac{2}{1-2\beta}.
 \end{equation}
Again, $\tilde{B}_2^{\beta}(\R) \subset L^{\infty}(\R)$ whenever $\beta > \frac{1}{2}$. However, infinitesimal energy concentrations can happen initially if $\bar{u}_x \in \tilde{B}_2^{\beta}$ for $\beta \leq \frac{1}{2}$. In fact, the cusped initial data studied in \cite[Sec. 5]{AlphaAlgorithm} and \cite[Sec. 3]{NumericalConservative} belongs to $\tilde{B}_2^{\nicefrac{1}{6}}(\R)$, which we now prove. 

\begin{example}[Cusp data]\label{ex:Cusp}
Let $a, b \in \R$ with $a \leq b$ and consider the initial data 
\begin{align*}
	\bar{u}(x) &= \begin{cases}
		\left | a \right|^{\frac{2}{3}}\!, &x < a, \\
		 \left |x \right |^{\frac{2}{3}}\!, &a \leq x \leq b, \\
		 \left|b\right|^{\frac{2}{3}}\!,  &b < x, 
		 \end{cases}
\end{align*}
with derivative given by 
\begin{align*}
	\bar{u}_x(x) &= \begin{cases}
	0, & x < a, \\
	\frac{2}{3} \mathrm{sgn}(x)\left |x \right|^{-\frac{1}{3}}\!, & a \leq x \leq b, \\
	0, & b < x. \end{cases}
\end{align*}
By a direct calculation, one verifies that $\bar{u}_x \in L^p(\R)$ for $p \in (0, 3)$.  Hence, if $\bar{u}_x$ is to belong to a certain Besov space $\tilde{B}_2^{\beta}$, then \eqref{eq:imbedding} reveals that $p^* = 3$ provides an upper bound on its $q$-integrability, which in particular implies, due to \eqref{eq:criticalParameter}, that $\beta \leq \frac{1}{6}$. In other words,  the best we can hope to prove is $\bar{u}_x \in \tilde{B}_2^{\beta}(\R)$ for some $\beta \leq \frac{1}{6}$. Before analyzing this in more detail, let us consider two specific situations, for which we obtain a better convergence rate than that guaranteed by Theorem~\ref{thm:ConvRateu}. 
\begin{itemize}
	\item If $a > 0$, then $\bar{u}_x(x) \geq 0$ everywhere, and no wave breaking takes place in the future.  Moreover $\bar{u}_x \in L^{\infty}(\R)$ such that \eqref{eq:convRateLip} applies.
	\item If $b < 0$, then $\bar{u}_x$ is monotonically decreasing and bounded over $[a, b]$. It therefore belongs to $\mathrm{BV}(\R)$, and hence $\bar{u}_x \in B_2^{\nicefrac{1}{2}}$ by \eqref{eq:BVinclusion}. 
\end{itemize}

It remains to consider $a \leq 0 < b$ and $a < 0 \leq b$, in which case wave breaking  happens for each $t \in \big[0, 3|a|^{\nicefrac{1}{3}}\big]$.

Suppose $h>0$ and note that $\mathrm{supp}(\bar{u}_x(\cdot +h)) \subseteq [a-h, b-h]$. Hence, there is no overlap between the support of $\bar{u}_x(\cdot +h)$ and $\bar{u}_x$ when $h \geq b-a = b + |a|$. In this case, recall $\delta_hf(x) = f(x+h) - f(x)$, we therefore get
\begin{align*}
	\int_{\R} ( \delta_h\bar{u}_x)^2(x)dx &= \frac{4}{9}\int_{a-h}^{b-h}|x+h|^{\nicefrac{-2}{3}}dx + \frac{4}{9}\int_a^b|x|^{\nicefrac{-2}{3}}dx 
	\\ &= \frac{4}{3} \Big(|a|^{\nicefrac{1}{3}} + b^{\nicefrac{1}{3}}\Big) \leq \frac{8}{3}h^{\nicefrac{1}{3}}. 
\end{align*}

Assume from now on that $h < b-a$, and decompose $\|\delta_h\bar{u}_x\|_2^2$ in the following way 
\begin{align}
	\int_{\R} ( \delta_h\bar{u}_x)^2(x)dx &= \frac{4}{9}\int_{a-h}^{a} |x+h|^{\nicefrac{-2}{3}}dx + \frac{4}{9} \int_{b-h}^{b}|x|^{\nicefrac{-2}{3}}dx  \nonumber\\
	& \hspace{0.1cm} + \frac{4}{9}\int_{a}^{b-h} \left(\mathrm{sgn}(x+h)|x+h|^{\nicefrac{-1}{3}} - \mathrm{sgn}(x)|x|^{\nicefrac{-1}{3}} \right)^2dx \label{eq:cusp}. 
\end{align}
In order to estimate the two first integrals we exploit the following inequality. 

\begin{prop}\label{prop:ineq}
	Given $q \in (0, 1]$, then for any $c, d, \in \R$ one has
	\begin{equation*}
		\left||c|^{q} - |d|^{q} \right| \leq |c-d|^{q}.
	\end{equation*}
\end{prop}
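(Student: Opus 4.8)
The plan is to reduce to the case $q<1$ (the case $q=1$ is the ordinary triangle inequality) and then to a one-variable statement about the concave function $t\mapsto t^{q}$ on $[0,\infty)$. First I would observe that by symmetry we may assume $|c|\ge |d|$, and that the claim only involves $|c|$ and $|d|$, so it suffices to prove $x^{q}-y^{q}\le (x-y)^{q}$ for all $x\ge y\ge 0$ --- except that $|c-d|$ can be as small as $|c|-|d|$ but also as large as $|c|+|d|$ (when $c,d$ have opposite signs), so the genuinely hardest instance is $|c-d|=\big||c|-|d|\big|$. Hence it is enough to show, for $x\ge y\ge 0$,
\begin{equation*}
	x^{q}\le y^{q}+(x-y)^{q}.
\end{equation*}

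The main step is this scalar inequality, which I would prove by the standard subadditivity argument for concave functions vanishing at the origin. Writing $s=x-y\ge 0$ and $t=y\ge 0$, the claim becomes $(s+t)^{q}\le s^{q}+t^{q}$. If $s+t=0$ both sides vanish; otherwise divide by $(s+t)^{q}$ and set $\lambda=\nicefrac{s}{(s+t)}\in[0,1]$, reducing to $\lambda^{q}+(1-\lambda)^{q}\ge 1$. Since $q\le 1$ we have $\lambda^{q}\ge\lambda$ and $(1-\lambda)^{q}\ge 1-\lambda$ (because $r\mapsto r^{q}$ dominates the identity on $[0,1]$ when $q\le 1$), and adding these gives $\lambda^{q}+(1-\lambda)^{q}\ge\lambda+(1-\lambda)=1$, as desired. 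Unwinding, $x^{q}=(s+t)^{q}\le s^{q}+t^{q}=(x-y)^{q}+y^{q}$.

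Finally I would assemble the pieces: with $x=\max(|c|,|d|)$ and $y=\min(|c|,|d|)$ the scalar inequality yields $\big||c|^{q}-|d|^{q}\big|=x^{q}-y^{q}\le(x-y)^{q}=\big||c|-|d|\big|^{q}$, and since $\big||c|-|d|\big|\le|c-d|$ by the reverse triangle inequality and $t\mapsto t^{q}$ is nondecreasing on $[0,\infty)$, we conclude $\big||c|^{q}-|d|^{q}\big|\le|c-d|^{q}$. I do not expect any real obstacle here; the only point to handle with a little care is the degenerate case $c=d=0$ (all terms zero) and the monotonicity of $t\mapsto t^{q}$ used in the last step, both of which are immediate.
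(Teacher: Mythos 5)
Your proof is correct and follows the same architecture as the paper's: both reduce the claim to subadditivity of $t \mapsto t^{q}$ on $[0,\infty)$, i.e., $(\tilde a + \tilde b)^{q} \le \tilde a^{q} + \tilde b^{q}$ for $\tilde a, \tilde b \ge 0$, and then conclude via the triangle inequality. The only real difference is how subadditivity is established: the paper differentiates $f(t) = (1+t)^{q} - 1 - t^{q}$ and uses $f(0)=0$ together with $f'(t) = q\bigl((1+t)^{q-1} - t^{q-1}\bigr) < 0$, whereas you normalize to $\lambda = \nicefrac{s}{(s+t)} \in [0,1]$ and invoke $\lambda^{q} + (1-\lambda)^{q} \ge \lambda + (1-\lambda) = 1$ --- a calculus-free variant that buys a small gain in elementarity at the cost of a degenerate-case check at $s+t=0$. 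Your assembly step (reverse triangle inequality plus monotonicity of $t^{q}$) is likewise an interchangeable rephrasing of the paper's direct use of $|c| \le |c-d| + |d|$ followed by subadditivity. Both arguments are sound.
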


\begin{proof}
	Let us start by defining the function $f(t) = (1+t)^q -1 -t^q$ and observe that $f(0) = 0$. Moreover, for any $t > 0$, we have $f'(t) = q ((1+t)^{q-1} - t^{q-1}) < 0$ such that $f(t) < 0$ for all $t \in (0, \infty)$. Pick any $\tilde{a}, \tilde{b} > 0$. Substituting $t=\frac{\tilde{b}}{\tilde{a}}$ yields
	\begin{equation*}
		0 > \Big(1+\frac{\tilde{b}}{\tilde{a}}\Big)^q - 1 - \Big(\frac{\tilde{b}}{\tilde{a}}\Big)^q = \Big(\frac{\tilde{a}+\tilde{b}}{\tilde{a}}\Big)^q - 1 - \Big(\frac{\tilde{b}}{\tilde{a}}\Big)^q,
	\end{equation*}
	which in turn implies
	\begin{equation}\label{eq:triangle}
		(\tilde{a} +\tilde{b})^q \leq \tilde{a}^q + \tilde{b}^q \hspace{0.35cm} \text{for all } \tilde{a}, \tilde{b} \geq 0.
	\end{equation}
	Pick any $c, d \in \R$. By \eqref{eq:triangle} and the fact that $|\cdot|^q$ is increasing, we obtain
	\begin{equation*}
		|c|^q =  ||c-d+d||^q \leq (|c-d| + |d|)^q \leq |c-d|^q + |d|^q
	\end{equation*}
	In other words, $|c|^q - |d|^q \leq |c-d|^q$, by arguing similarly with the roles of $c$ and $d$ reversed, the inequality follows.
\end{proof}

By using Proposition~\ref{prop:ineq} we find that 
\begin{align*}
	\int_{a-h}^{a} |x+h|^{\nicefrac{-2}{3}}dx = 3(|a|^{\nicefrac{1}{3}} + \mathrm{sgn}(a+h)|a+h|^{\nicefrac{1}{3}}) \leq \begin{cases} 
	3h^{\nicefrac{1}{3}}, & |a| \geq h, \\
	6h^{\nicefrac{1}{3}}, & |a| < h, 
	\end{cases}
\end{align*}
and 
\begin{align*}
	 \int_{b-h}^{b}|x|^{\nicefrac{-2}{3}}dx =  3(|b|^{\nicefrac{1}{3}} - \mathrm{sgn}(b-h)|b-h|^{\nicefrac{1}{3}}) &\leq \begin{cases}
	3h^{\nicefrac{1}{3}}, & b \geq h, \\
	 6h^{\nicefrac{1}{3}}, & b < h. 
	 \end{cases}
\end{align*}
It remains to consider the last term in \eqref{eq:cusp}. Suppose $h < \min \{|a|, b\}$, then \begin{align}\label{eq:splitting}
	\int_{a}^{b-h} \!\bigl(\mathrm{sgn}(x+h)|x+h|^{\nicefrac{-1}{3}} - \mathrm{sgn}(x)|x|^{\nicefrac{-1}{3}} \bigr)^2dx &= \int_a^{-h} \left(|x|^{\nicefrac{-1}{3}} - |x+h|^{\nicefrac{-1}{3}}\right)^2dx \nonumber
	\\ & + \int_{-h}^0\! \left(|x+h|^{\nicefrac{-1}{3}} + |x|^{\nicefrac{-1}{3}} \right)^2dx
	\\ & + \int_{0}^{b-h}\! \left( |x+h|^{\nicefrac{-1}{3}} - |x|^{\nicefrac{-1}{3}}\right)^2dx. \nonumber
\end{align}
Let us estimate the first integral. By a direct calculation
\begin{align*}
	\int_a^{-h}\! \left(|x|^{\nicefrac{-1}{3}} - |x+h|^{\nicefrac{-1}{3}}\right)^2dx &= \int_{a}^{-h}|x|^{\nicefrac{-2}{3}}dx + \int_a^{-h} |x+h|^{\nicefrac{-2}{3}}dx
	\\ & \quad -2 \int_{a}^{-h}|x|^{\nicefrac{-1}{3}}|x+h|^{\nicefrac{-1}{3}}dx
	\\ &= 3\Big(|a|^{\nicefrac{1}{3}} - h^{\nicefrac{1}{3}} + ||a|-h|^{\nicefrac{1}{3}}  \Big)
	\\ & \quad -2 \int_{a}^{-h}|x|^{\nicefrac{-1}{3}}|x+h|^{\nicefrac{-1}{3}}dx.
\end{align*}
The last term is negative and since $|x+h|^{\nicefrac{-1}{3}} \geq |x|^{\nicefrac{-1}{3}}$ for $x \in [a, -h]$, we get
\begin{align}\label{eq:crucialBound}
	-2 \int_{a}^{-h}|x|^{\nicefrac{-1}{3}}|x+h|^{\nicefrac{-1}{3}}dx \leq -2\int_{-|a|}^{-h}|x|^{\nicefrac{-2}{3}} dx= 6\Big(h^{\nicefrac{1}{3}} - |a|^{\nicefrac{1}{3}}\Big).
\end{align}
\begin{figure}
	\includegraphics[scale=0.9]{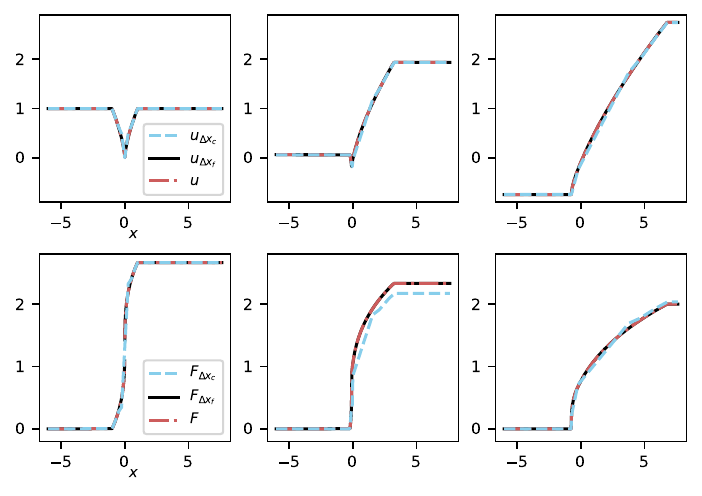}
	\captionsetup{width=.925\linewidth}
	\vspace{-2ex}
	\caption{A comparison of $u$ (top row) and $F$ (bottom row), both with dotted red lines, to that of $u_{\Dx_j}$ (top row) and $F_{\Dx_j}$ (bottom row) for $\Dx_c = \nicefrac{1}{4}$ (blue dashed) and $\Dx_f=10^{-3}$ (black solid) for Example~\ref{ex:Cusp} with $a=-1$ and $b=1$. The solutions are compared from left to right at $t=0$, $\frac{3}{2}$, and $3$, with $\alpha=\nicefrac{1}{2}$.}
	\label{fig:testCusp}
\end{figure}

Combining \eqref{eq:triangle} with the fact that $|\cdot|^q$ is increasing and \eqref{eq:crucialBound} therefore yields
\begin{align*}
	\int_a^{-h}\! \left(|x|^{\nicefrac{-1}{3}} - |x+h|^{\nicefrac{-1}{3}}\right)^2dx &\leq 3\Big(|a|^{\nicefrac{1}{3}} - h^{\nicefrac{1}{3}} + \underbrace{||a|-h|^{\nicefrac{1}{3}}}_{\leq |a|^{\nicefrac{1}{3}} + h^{\nicefrac{1}{3}}}  \Big) + 6\Big(h^{\nicefrac{1}{3}} - |a|^{\nicefrac{1}{3}}\Big) \vspace{-0.2cm}
	\\ &\leq 6h^{\nicefrac{1}{3}}. 
\end{align*}
One can argue similarly for the last integral in \eqref{eq:splitting}. Moreover, using $(c+d)^2 \leq 2(c^2 + d^2)$ with $c=|x+h|^{-\nicefrac{1}{3}}$ and $d=|x|^{-\nicefrac{1}{3}}$ for the second term in \eqref{eq:splitting} gives 
\begin{equation*}
	\int_{-h}^0\! \left(|x|^{\nicefrac{-1}{3}} + |x+h|^{\nicefrac{-1}{3}}\right)^2 \!dx \leq 2\int_{-h}^0\! \left( |x|^{\nicefrac{-2}{3}} + |x+h|^{\nicefrac{-2}{3}} \right)\!dx \leq 12h^{\nicefrac{1}{3}}, 
\end{equation*}
and we thus end up with 
\begin{equation*}
	\|\delta_h\bar{u}_x \|_2^2 \leq \frac{40}{3} |h|^{\nicefrac{1}{3}}, \hspace{0.5cm} \text{when } h < \min \{|a|, b \}. 
\end{equation*}
The cases $|a| \leq h \leq b$ and $b < h < |a|$ can be treated similarly, and, moreover, as one of the terms in \eqref{eq:splitting} are redundant in either case, we finally obtain
\begin{equation*}
	\|\delta_h\bar{u}_x \|_2 \leq  \frac{40}{3} h^{\nicefrac{1}{6}} \hspace{0.3cm} \text{for all } h \geq 0.
\end{equation*}
Recall \eqref{eq:semiNorm}, we have thus shown that $|\bar{u}_x|_{2, \nicefrac{1}{6}} \leq \nicefrac{40}{3}$ and that the convergence rate for this example depends on the parameters $a$ and $b$ in the following way 
\begin{align*}
	\|u(t) - u_{\Dx}(t) \|_{\infty} &\leq \begin{cases}
	\mathcal{O}(\Dx), & \text{if } a > 0, \\
	\mathcal{O}(\Dx^{\nicefrac{1}{16}}), & \text{if } b < 0, \\
	\mathcal{O}(\Dx^{\nicefrac{1}{48}}), & \text{if } a \leq 0 < b \text{ or } a < 0 \leq b, 
	\end{cases}
\end{align*}
whenever $\bar{\mu}_{\mathrm{sc}}=0$. 

The exact solution, when $a = -1$ and $b=1$, is compared with two numerical approximations in Figure~\ref{fig:testCusp}, where $\Dx_c$ and $\Dx_f$ are as before.  The solutions are compared at the times $t=0$, $\nicefrac{3}{2}$, and $3$. Moreover, Table~\ref{tab:EOCCusp} displays the associated relative errors and computed EOCs, which are significantly better than the predicted $\mathcal{O}(\Dx^{\nicefrac{1}{48}})$-rate. Yet, the computed EOCs in Table~\ref{tab:EOCCusp} are lower than the corresponding ones in Table~\ref{tab:EOCCosine}, except from those with $k\geq 6$ (where the aforementioned additional numerical errors influence the EOCs), which suggests that the regularity of $\bar{u}_x$ also influences the numerical convergence rate.

\begin{table}
 \setlength\tabcolsep{4.0pt}
\begin{tabular}{c|ccccccc} 
 \hline
 $k$  & $1$ & $2$ & $3$ & $4$ & $5$ & $6$ & $7$ \\ 
 \hline
  $\mathrm{Err}_k(3)$  & $0.10$ & $0.045$ & $0.019$ & $6.60 \cdot 10^{-3}$ & $2.22 \cdot 10^{-3}$ & $7.95 \cdot 10^{-4}$ & $2.80 \cdot 10^{-4}$ \\ 
  \hline
  $\mathrm{EOC}_k(3)$  & $-$ & $0.58$ & $0.63$ & $0.74$ & $0.79$ & $0.74$ & $0.75$
\end{tabular}
	\captionsetup{width=.925\linewidth}
	\vspace{-0.35cm}
\caption{Computed relative errors and EOCs with $T=3$ for Example~\ref{ex:Cusp} when $\alpha = \nicefrac{1}{2}$, that is, these quantities are computed in $[0,3]$, after the last wave breaking occurrence for the exact solution. } 
\label{tab:EOCCusp}
\end{table}
\end{example}

\section*{Acknowledgments}
The author gratefully acknowledges the hospitality of the Mittag--Leffler Institute, Sweden, for creating a great working environment for research, during the fall semester 2023. 

The author is deeply indebted to Susanne Solem and Katrin Grunert for their careful reading of the manuscript and many helpful discussions in the course of this work. 

\bibliographystyle{plain}

\begin{thebibliography}{10}

\bibitem{InverseScattering}
Richard Beals, David~H. Sattinger, and Jacek Szmigielski.
\newblock Inverse scattering solutions of the {H}unter-{S}axton equation.
\newblock {\em Appl. Anal.}, 78(3-4):255--269, 2001.

\bibitem{BogachevBook}
Vladimir~I. Bogachev.
\newblock {\em Weak convergence of measures.}
\newblock Vol. 234. Mathematical Surveys and Monographs. American Mathematical Society, Providence, RI, 2018, xii+286. 

\bibitem{BogachevMeasure}
Vladimir~I. Bogachev.
\newblock {\em Measure theory.}
\newblock First edition, Springer-Verlag, Berlin, 2007, xvii+1075.

\bibitem{BogachevNonlinearInt}
Vladimir I. Bogachev, Egor D. Kosov, and Svetlana N. Popova.
\newblock {A new approach to Nikolskii-Besov classes.}
\newblock {\em Mosc. Math. J.}, 19(4):619--654, 2019. 

\bibitem{BressanGlobalSolutions}
Alberto Bressan and Adrian Constantin.
\newblock {Global solutions of the {H}unter--{S}axton equation.}
\newblock {\em SIAM J. Math. Anal.}, 37(3):996--1026, 2005. 

\bibitem{LipschitzConservative}
Alberto Bressan, Helge Holden, and Xavier Raynaud.
\newblock {{L}ipschitz metric for the {H}unter--{S}axton equation.}
\newblock {\em J. Math. Pures Appl.}, 94(1):68--92, 2010. 

\bibitem{ConsMetric}
José Antonio Carrillo, Katrin Grunert, and Helge Holden.
\newblock {A {L}ipschitz metric for the {H}unter--{S}axton equation.}
\newblock {\em Comm. Partial Differential Equations.}, 44(4):309--334, 2019. 

\bibitem{Alpha2}
Thomas Christiansen and Katrin Grunert.
\newblock A numerical view on {$\alpha$}-dissipative solutions of the {H}unter--{S}axton equation,
\arxiv{2404.11174}. 

\bibitem{AlphaAlgorithm}
Thomas Christiansen, Katrin Grunert, Anders Nordli, and Susanne Solem.
\newblock   A convergent Numerical Algorithm for {$\alpha$}-Dissipative Solutions of the {H}unter--{S}axton equation.
\newblock {\em J. Sci. Comput.}, 99(1):Paper No. 14, 42 pp, 2024. 

\bibitem{DafermosContinuous}
Constantine M. Dafermos.
\newblock {Continuous solutions for balance laws}
\newblock {\em Ric. Mat.}, 55(1):79--91, 2006. 

\bibitem{DafermosCharacteristics}
Constantine~M. Dafermos.
\newblock Generalized characteristics and the {H}unter-{S}axton equation.
\newblock {\em J. Hyperbolic Differ. Equ.}, 8(1):159--168, 2011.

\bibitem{NonlinearApproxBook}
Ronald~A. DeVore.
\newblock {\em Nonlinear approximation.}
\newblock Vol. 7. Acta Numerica. Cambridge Univ. Press, Cambridge, 1998, 51--150.

\bibitem{ConstructiveApprox}
Ronald~A. DeVore and George~G. Lorentz. 
\newblock {\em Constructive Approximation.}
\newblock Vol. 303. A series of Comprehensive Studies in Mathematics. Springer, Berlin, 1993, x+449.

\bibitem{RealAnalysisFolland}
Gerald~B. Folland.
\newblock {\em Real analysis. Modern techniques and their applications.}
\newblock Pure and Applied Mathematics, second edition. A Wiley-Interscience
  Publication, New York,  1999.

\bibitem{RegularityStructure}
Yu Gao, Hao Liu, and Tak Kwong Wong. 
\newblock {Regularity structure of conservative solutions to the {H}unter--{S}axton equation.}
\newblock {\em SIAM J. Math. Anal.}, 54(1):423--452, 2022. 

  \bibitem{EquivalenceEulerLagrange}
Markus Grasmair, Katrin Grunert, and Helge Holden.
\newblock On the equivalence of {E}ulerian and {L}agrangian variables for the
  two-component {C}amassa-{H}olm system.
\newblock In: Rassias, T. (eds) {\em Current research in nonlinear analysis}, {\em
  Springer Optim. Appl.}, 135:157--201, Cham, 2018.
  
  \bibitem{UniquenessConservative}
Katrin Grunert and Helge Holden.
\newblock Uniqueness of conservative solutions for the {H}unter-{S}axton
  equation.
\newblock {\em Res. Math. Sci.}, 9(2):Paper No. 19, 54 pp, 2022.
  
\bibitem{AlphaCH}
Katrin Grunert, Helge Holden, and Xavier Raynaud.
\newblock {A continuous interpolation between conservative and dissipative solutions for the two-component {C}amassa--{H}olm system.}
\newblock {\em  Forum Math. Sigma}, 3: Paper No. e1, 73 pages, 2015. 

\bibitem{Conservative2CH}
Katrin Grunert, Helge Holden, and Xavier Raynaud.
\newblock{Global solutions for the two-component {C}amassa--{H}olm System.}
\newblock{\em Comm. Partial Differential Equations.}, 37(12):2245--2271, 2012.

\bibitem{NonVanishingAsymptotes}
Katrin Grunert, Helge Holden, and Xavier Raynaud.
\newblock Global dissipative solutions of the two-component {C}amassa--{H}olm
  system for initial data with nonvanishing asymptotics.
\newblock {\em Nonlinear Anal. Real World Appl.}, 17:203--244, 2014.

\bibitem{AlphaHS}
Katrin Grunert and Anders Nordli.
\newblock Existence and {L}ipschitz stability for {$\alpha$}-dissipative
  solutions of the two-component {H}unter-{S}axton system.
\newblock {\em J. Hyperbolic Differ. Equ.}, 15(3):559--597, 2018.

\bibitem{NumericalConservative}
Katrin Grunert, Anders Nordli, and Susanne Solem.
\newblock Numerical conservative solutions of the {H}unter-{S}axton equation.
\newblock {\em BIT}, 61(2):441--471, 2021.

\bibitem{NewestLipschitzMetric}
Katrin Grunert and Matthew Tandy.
\newblock A {L}ipschitz metric for {$\alpha$}-dissipative solutions to the
  {H}unter-{S}axton equation.
  \newblock {\em Partial Differ. Equ. Appl.}, 5: Article no. 24, 2024. 
  
\bibitem{TandyLipschitz}
Katrin Grunert and Matthew Tandy.
\newblock Lipschitz stability for the {H}unter-{S}axton equation.
\newblock {\em J. Hyperbolic Differ. Equ.}, 19(2):275--310, 2022.

\bibitem{FDDissipative}   
Helge Holden, Kenneth~H. Karlsen and Nils~H. Risebro.
\newblock {Convergent difference schemes for the {H}unter-{S}axton
              equation}.
 \newblock{ \em Math. Comp.}, 76(258):699--744, 2007. 

\bibitem{LagrangianViewCH}
Helge Holden and Xavier Raynaud.
\newblock Global conservative solutions of the {C}amassa-{H}olm equation--a
  {L}agrangian point of view.
\newblock {\em Comm. Partial Differential Equations}, 32(10-12):1511--1549,
  2007.

\bibitem{FrontTracking}
Helge Holden and Nils Henrik Risebro. 
\newblock {\em Front tracking for hyperbolic conservation laws.}
\newblock Second edition Vol. 152. Applied Mathematical Sciences. Springer,
Heidelberg, 2015, xiv+515.

\bibitem{DynamicsDirector}
John~K. Hunter and Ralph Saxton.
\newblock Dynamics of director fields.
\newblock {\em SIAM J. Appl. Math.}, 51(6):1498--1521, 1991.
 
 \bibitem{PropertiesHS}
 John~K. Hunter and Yuxi Zheng.
 \newblock On a completely integrable nonlinear hyperbolic variational equation
 \newblock {\em Phys. D.}, 79(2--4):361--386, 1994. 
 
 \bibitem{NonlinearVariational}
John~K. Hunter and Yuxi Zheng.
\newblock On a nonlinear hyperbolic variational equation. {I}. {G}lobal
              existence of weak solutions.
 \newblock {\em Arch. Rational Mech. Anal.}, 129(4):305--353, 1995.
      
 \bibitem{VanishingViscosity1} 
 Jingyu Li and Kaijun Zhang. 
 \newblock Global existence of dissipative solutions to the
              {H}unter-{S}axton equation via vanishing viscosity.
 \newblock {\em J. Differential Equations}, 250(3):1427--1447, 2011. 
   
 \bibitem{McDonaldAnalysis}
John~N. McDonald and Neil~A. Weiss. 
 \newblock  {\em A course in real analysis},
 \newblock Second edition, Academic Press inc. (San Diego CA), 2013, xix+667. 

   
   \bibitem{GeometricInt}
Yuto Miyatake, David Cohen, Daisuke Furihata and Takayasu Matsuo.
\newblock {Geometric numerical integrators for {H}unter-{S}axton-like
              equations}.
 \newblock {\em Jpn. J. Ind. Appl. Math.}, 34(2):441--472, 2017. 

 \bibitem{PhDNordli}
Anders Nordli.
\newblock {\em On the two-component Hunter--Saxton system}.
\newblock PhD thesis, Norwegian University of Science and Technology (NTNU),
  2017.

\bibitem{RateVanishingViscosity}
Lei Peng, Jingyu Li, Ming Mei and Kajun Zhang. 
\newblock Convergence rate of the vanishing viscosity limit for the
              {H}unter-{S}axton equation in the half space.
\newblock {\em J. Differential Equations}, 328(1):202--227, 2022. 

 \bibitem{Nikolskii}
 Jacques Simon.
 \newblock {{S}obolev, {B}esov and {N}ikolskiı fractional spaces: imbeddings
and comparisons for vector valued spaces on an interval}
\newblock {\em Ann. Mat. Pura Appl.}, 157(4):117--148, 1990.
 
 \bibitem{OptimalTransport}
 Cédric Villani.
 \newblock {\em Topics in optimal transportation.} 
 \newblock Vol. 58. Graduate Studies in
Mathematics. American Mathematical Society, Providence, RI, 2003, xvi+370.

\bibitem{DGMethod1}
Yan Xu and Chi-Wang Shu.
\newblock {Local discontinuous {G}alerkin method for the
              {H}unter-{S}axton equation and its zero-viscosity and
              zero-dispersion limits}.
\newblock {\em SIAM J. Sci. Comput.}, 31(2):1249--1268, 2009. 
 
 \bibitem{DGMethod2}
   Yan Xu and Chi-Wang Shu.
   \newblock {Dissipative numerical methods for the {H}unter-{S}axton
              equation}. 
  \newblock {\em J. Comput. Math.}, 28(5):606--620, 2010. 

\end{thebibliography}

\appendix

\section{Exact solution to the two multipeakon examples}\label{app:multipeakon}
Given $\alpha \in [0, 1]$, consider the Cauchy problem with initial data
\begin{align*}
	\bar{u}(x) &= \begin{cases}
	\frac{1}{2}, & x < 0, \\
	-x + \frac{1}{2}, & 0 \leq x \leq \frac{1}{2}, \\
	0, & \frac{1}{2} < x, 
	\end{cases} \\
	\bar{F}(x) &= \begin{cases}
	0, &  x < 0, \\
	x, & 0 \leq x \leq \frac{1}{2}, \\
	\frac{1}{2}, & \frac{1}{2} < x. 
	\end{cases}
\end{align*}
Applying the mapping $L$ from \eqref{eq:MapL} results in the Lagrangian triplet $(\bar{y}, \bar{U}, \bar{V})$ given by 
\begin{align*}
	\bar{y}(\xi) &= \begin{cases}
	\xi, & \xi < 0, \\
	\frac{1}{2}\xi, & 0 \leq \xi \leq 1, \\
	\xi - \frac{1}{2}, & 1 < \xi, 
	\end{cases} \\
	\bar{U}(\xi) &= \begin{cases}
	\frac{1}{2}, & \xi < 0, \\
	\frac{1}{2}(-\xi + 1), & 0 \leq \xi \leq 1, \\
	0, & 1 < \xi
	\end{cases} \\
	\bar{V}(\xi) &= \begin{cases}
	0, & \xi < 0, \\
	\frac{1}{2} \xi, &0 \leq \xi \leq 1, \\
	\frac{1}{2}, & 1 < \xi,
	\end{cases}
\end{align*}	
which inserted into \eqref{eq:waveBreaking} gives
\begin{align}\label{eq:ExBreak}
	\tau(\xi) &= \begin{cases}
	2, & 0 \leq \xi \leq 1, \\
	\infty, & \text{otherwise}. 
	\end{cases}
\end{align}
In other words, no wave breaking occurs inside the time interval $[0, 2)$, and the solution to \eqref{eq:LagrSystem} with initial data $(\bar{y}, \bar{U}, \bar{V})$ therefore reads, for $t \in [0, 2)$, 
\begin{subequations}
\begin{align}\label{eq:exLagrBefore}
	y(t, \xi) &= \begin{cases}
		\xi - \frac{1}{16}t^2 + \frac{1}{2}t, & \xi < 0, \\
		\frac{1}{8}(t-2)^2\xi - \frac{1}{16}t^2 + \frac{1}{2}t, & 0 \leq \xi \leq 1, \\
		\xi + \frac{1}{16}t^2 - \frac{1}{2}, & 1 < \xi, 
		\end{cases} \\ 
	U(t, \xi) &= \begin{cases}
	-\frac{1}{8}t + \frac{1}{2}, & \xi < 0, \\
	\frac{1}{4}(t-2)\xi - \frac{1}{8}t  + \frac{1}{2},  & 0 \leq \xi \leq 1, \\
	\frac{1}{8}t, & 1 < \xi, 
	\end{cases} \\ 
	V(t, \xi) &= \bar{V}(\xi). 
\end{align}
\end{subequations}
Observe that $y(t, \cdot)$ is strictly increasing for each $t \in [0, 2)$, such that its inverse can be computed by solving $x=y(t, \xi)$ in terms of $\xi$. It therefore follows from \eqref{eq:MapM} that
\begin{align}\label{eq:eulBeforeBreaking}
	u(t, x) &= \begin{cases}
	-\frac{1}{8}t + \frac{1}{2}, & x < \frac{1}{16}(8-t)t, \\
	\frac{1}{4(t-2)}(8x - (t+4)), & \frac{1}{16}(8-t)t \leq x \leq \frac{1}{16}(t^2 + 8), \\
	\frac{1}{8}t, & \frac{1}{16}(t^2 + 8) < x, 
	\end{cases} \\
	F(t, x) &= \begin{cases}
	0, &  x < \frac{1}{16}(8-t)t, \\
	\frac{1}{4(t-2)^2}(16x + t^2 -8t), & \frac{1}{16}(8-t)t \leq x \leq \frac{1}{16}(t^2 + 8), \\
	\frac{1}{2}, & \frac{1}{16}(t^2 + 8) < x.  
	\end{cases}
\end{align}
Moreover, as some energy possibly is lost  at the wave breaking which takes place at time $t=2$ for all $\xi \in [0, 1]$, cf. \eqref{eq:ExBreak}, one has to rescale the Lagrangian energy. In particular, for $t \geq 2$, we have 
\begin{align*}
	V(t, \xi) = \begin{cases}
	0, & \xi < 0, \\
	\frac{1}{2}(1-\alpha)\xi, & 0 \leq \xi \leq 1, \\
	\frac{1}{2}(1-\alpha), & 1 < \xi. 
	\end{cases}
\end{align*}
and by solving \eqref{eq:LagrSystem} with data $\lim_{t \uparrow 2}(y, U)(t) = (y, U)(2)$ one obtains
\begin{align}\label{eq:exLagrAfter}
	y(t, \xi) &= \begin{cases}
	\xi - \frac{1}{16}(1-\alpha)t^2 + \frac{1}{4}(2-\alpha)t + \frac{1}{4}\alpha, & \xi < 0, \\
	\frac{1}{8}(1-\alpha)(t-2)^2\xi - \frac{1}{16}(1-\alpha)t^2 + \frac{1}{4}(2-\alpha)t + \frac{1}{4}\alpha, & 0 \leq \xi \leq 1, \\
	\xi + \frac{1}{16}(1-\alpha)t^2 + \frac{1}{4}\alpha t - \frac{1}{2} - \frac{1}{4}\alpha, & 1 < \xi, 
	\end{cases}  \nonumber \\
	U(t, \xi) &= \begin{cases}
	-\frac{1}{8}(1-\alpha)t + \frac{1}{4}(2-\alpha), & \xi < 0, \\
	\frac{1}{4}(1-\alpha)(t-2)\xi -\frac{1}{8}(1-\alpha)t + \frac{1}{4}(2-\alpha), & 0 \leq \xi \leq 1, \\
	\frac{1}{8}(1-\alpha)t + \frac{1}{4}\alpha, & 1 < \xi. 
	\end{cases}
\end{align}
No more  wave breaking occurs, such that \eqref{eq:exLagrBefore} together with \eqref{eq:exLagrAfter}  provide a global solution to \eqref{eq:LagrSystem}. Moreover, $y(t, \cdot)$ is strictly increasing for each $t >2$. Its inverse can therefore be computed, which due to \eqref{eq:MapM} implies
\begin{align}\label{eq:eulAfter}
	u(t, x) &= \begin{cases}
	-\frac{1}{8}(1-\alpha)t + \frac{1}{4}(2-\alpha), & x < x_1(t),  \\
	\frac{2}{t-2}\big(x - \frac{1}{8}(t+4) \big), & x_1(t) \leq x \leq x_2(t), \\
	\frac{1}{8}(1-\alpha)t + \frac{1}{4}\alpha, & x_2(t) < x, 
	\end{cases} \nonumber \\
	F(t, x) &= \begin{cases}
	0, & x < x_1(t), \\
	\frac{4}{(t-2)^2}\big(x + \frac{1}{16}(1-\alpha)t^2 - \frac{1}{4}(2-\alpha)t - \frac{1}{4}\alpha\big), & x_1(t) \leq x \leq x_2(t), \\
	\frac{1}{2}(1-\alpha), & x_2(t) < x, \end{cases}
\end{align}
where 
\begin{align*}
	x_1(t)\! :&= -\frac{1}{16}(1-\alpha)t^2 + \frac{1}{4}(2-\alpha)t + \frac{1}{4}\alpha, \\
	x_2(t)\!:&= \frac{1}{16}(1-\alpha)t^2 + \frac{1}{4}\alpha t + \frac{1}{4}(2-\alpha). 
\end{align*}
Combining \eqref{eq:eulBeforeBreaking} and \eqref{eq:eulAfter} yields the globally defined $\alpha$-dissipative solution $(u, F)(t, \cdot)$. 

The $\alpha$-dissipative solution with initial data \eqref{eq:worsePeakon} can be computed in precisely the same way. In particular, for $t < 2 $ it is given by 
\begin{align*}
	u(t, x) &= \begin{cases}
	-\frac{1}{12}t + \frac{1}{3}, & x < -\frac{1}{24}t^2 + \frac{1}{3}t, \\
	\frac{2}{t-2}\big(x-\frac{1}{12}t - \frac{1}{3} \big), &  -\frac{1}{24}t^2 + \frac{1}{3}t \leq x \leq \frac{1}{24}t^2 + \frac{1}{3}, \\
	\frac{1}{12}t, & \frac{1}{24}t^2 + \frac{1}{3} < x, 
	\end{cases} \\
	F(t, x) &= \begin{cases}
	0, & x < -\frac{1}{24}t^2 + \frac{1}{3}t, \\
	\frac{4}{(t-2)^2} \big(x + \frac{1}{24}t^2 - \frac{1}{3}t \big), &  -\frac{1}{24}t^2 + \frac{1}{3}t \leq x \leq \frac{1}{24}t^2 + \frac{1}{3}, \\
	\frac{1}{3}, & \frac{1}{24}t^2 + \frac{1}{3} < x, 
	\end{cases}
\end{align*}
while for $t \geq 2$ it reads
\begin{align*}
	u(t, x) &= \begin{cases}
	-\frac{1}{12}(1-\alpha)t + \frac{1}{6}(2-\alpha), & x < x_1(t), \\
	\frac{2}{t-2} \big(x - \frac{1}{12}t - \frac{1}{3} \big), & x_1(t) \leq x \leq x_2(t), \\
	\frac{1}{12}(1-\alpha)t + \frac{1}{6}\alpha, & x_2(t)<x, 
	\end{cases} \\
	F(t, x) &= \begin{cases}
	0, & x < x_1(t), \\
	\frac{4}{(t-2)^2} \big( x + \frac{1}{24}(1-\alpha)t^2 - \frac{1}{6}(2-\alpha)t -\frac{1}{6}\alpha \big), & x_1(t) \leq x \leq x_2(t), \\
	\frac{1}{3}(1-\alpha), & x_2(t) < x, 
	\end{cases}
\end{align*}
where
\begin{align*}
	x_1(t) \!:&= -\frac{1}{24}(1-\alpha)t^2 + \frac{1}{6}(2-\alpha)t + \frac{1}{6}\alpha, \\
	x_2(t) \!:&= \frac{1}{24}(1-\alpha)t^2 + \frac{1}{6}\alpha t + \frac{1}{6}(2-\alpha). 
\end{align*}
\end{document}